\documentclass[preprint,12pt]{article}
\usepackage[top=1.1in, bottom=1.2in, left=0.8in, right=0.8in]{geometry}
\usepackage{amsmath,amsfonts,amssymb,amsthm}
\usepackage{mathrsfs,dsfont}
\usepackage{graphicx}
\usepackage{colortbl,dcolumn}
\usepackage{psfrag}
\usepackage{booktabs}
\usepackage{cite}
\usepackage{subfigure}
\usepackage{hyperref}
\usepackage{listings}
\usepackage{algorithm, algorithmic}
\usepackage{comment}
\allowdisplaybreaks[4]
\numberwithin{equation}{section}

\newcommand{\R}{\mathbb{R}}
\newcommand{\N}{\mathbb{N}}
\newcommand{\E}{\mathbb{E}}
\renewcommand{\P}{\mathbb{P}}
\newcommand{\tr}{\operatorname{Tr}}

\newcommand{\diff}{{\,\rm{d}}}
\newcommand{\dif}{{\rm{d}}}

\newcommand{\F}{\mathcal{F}}

\newtheorem{theorem}{Theorem}[section]

\newtheorem{lemma}[theorem]{Lemma}
\newtheorem{proposition}[theorem]{Proposition}

\newtheorem{example}[theorem]{Example}
\newtheorem{assumption}[theorem]{Assumption}

\begin{document}
	
\title{Weak convergence order of stochastic theta method for SDEs driven by time-changed L\'{e}vy noise\footnotemark[1]}
	

\footnotetext{\footnotemark[1] This work was supported by National Natural Science Foundation of China (Nos. 12501581, 12201552), Yunnan Fundamental Research Projects (No. 202301AU070010), and the Disciplinary Funding of Central University of Finance and Economics.}

	
\author{Ziheng Chen\footnotemark[2],
	    \quad Jiao Liu\footnotemark[3],
		\quad Meng Cai\footnotemark[4]}
	
\footnotetext{\footnotemark[2] School of Mathematics and Statistics, Yunnan University, Kunming, Yunnan, 650500, China. Email: czh@ynu.edu.cn.}
	
\footnotetext{\footnotemark[3] School of Mathematics and Statistics, Yunnan University, Kunming, Yunnan, 650500, China. Email: liujiao3@stu.ynu.edu.cn. }
	
\footnotetext{\footnotemark[4] School of  Statistics and Mathematics, Central University of Finance and Economics, Beijing, 100081, China. Email: mcai@lsec.cc.ac.cn. Corresponding author.}
	
\date{}
\maketitle

\begin{abstract}
      {\rm\small
      This paper studies the weak convergence order of the stochastic theta method for stochastic differential equations (SDEs) driven by time-changed L\'{e}vy noise under global Lipschitz and linear growth conditions. In contrast to classical L\'{e}vy-driven SDEs, the presence of a random time change makes the weak error analysis involve both the discretization error of the underlying equation and the approximation error of the random clock. Moreover, compared with explicit Euler--Maruyama method, the implicit drift correction in the stochastic theta method makes the associated weak error analysis substantially more delicate. To address these difficulties, we first establish a global weak convergence estimate of order one for the stochastic theta method applied to the corresponding non-time-changed L\'{e}vy SDEs on the infinite time interval by means of the Kolmogorov backward partial integro differential equations. Incorporating the approximation of the inverse subordinator together with the duality principle, we derive the weak convergence order of the stochastic theta method with $\theta \in [0,1]$ in the time-changed L\'{e}vy setting. The result advances the currently available weak convergence analysis beyond the Euler–Maruyama method to the more general class of stochastic theta method, and establishes a workable route from the weak analysis of the underlying non-time-changed L\'{e}vy equation to the corresponding time-changed problem. Finally, numerical experiments are presented to further support the theoretical findings.
      } \\
		
      \textbf{AMS subject classification: }
      {\rm\small 60H10, 60H35, 65C30}\\
      
      \textbf{Key Words: }{\rm\small Time-changed L\'{e}vy SDEs; Stochastic theta method; Weak convergence order; Kolmogorov backward partial integro differential equations}
\end{abstract}

\section{Introduction}
Complex dynamical systems, influenced simultaneously by random waiting times, subdiffusion, burst-like jumps, and non-Gaussian perturbations, arise naturally in finance, physics, biology, and engineering \cite{ni2020study}. In contrast to classical SDEs evolving under a deterministic clock, time-changed stochastic models incorporate a random time transformation and are therefore better suited to describing trapping effects, random activity periods, and anomalous diffusion; see, e.g., \cite{magdziarz2009stochastic, nane2021timechanged}. Motivated by this, we consider the following SDEs driven by time-changed L\'{e}vy noise
\begin{align}\label{eq:TCSDE}
	  \diff{X(t)}
	  =&~
	  f(E(t),X(t-))\diff{E(t)}
	  +
	  g(E(t),X(t-))\diff{W(E(t))} \notag
	  \\&~+ 
     \int_{|z|<c} h(E(t),X(t-),z)  
	  \widetilde{N}(\dif{z},\dif{E(t)}), 
	  \quad t \in [0,T]
\end{align}
with $X(0) = x_{0} \in \R^{d}$, where $W$ is an $m$-dimensional standard Brownian motion, and $\widetilde{N}$ is the compensator Poisson random measure. Here, the  random time-change process $\{E(t)\}_{t \geq 0}$ is used to describe the discrepancy between physical time and operational time, thereby providing a natural way to describe memory effects, trapping phenomena, and anomalous diffusion. Combined with L\'{e}vy jump perturbations, this random time-change structure yields a flexible framework for dynamical systems with random clocks and sudden transitions. For this reason, such equations have attracted considerable attention in recent years, and substantial progress has been made in their study, including existence, uniqueness, moment estimates, stability analysis, and their connections with fractional Fokker--Planck equations; see, e.g., \cite{magdziarz2009stochastic, nane2016stochastic, nane2017stability, nane2018path, nane2021timechanged, ni2020study} and references therein.

Since exact solutions to time-changed SDEs are rarely available in explicit form, numerical methods play an important role in investigating their dynamical behavior. For SDEs driven by time-changed Brownian motion (i.e., $h(\cdot,\cdot,\cdot) \equiv 0$ in \eqref{eq:TCSDE}), the seminal work \cite{jum2016strong} establishes the strong and weak convergence orders of Euler--Maruyama method under the global Lipschitz and linear growth conditions. Subsequently, a lot of works investigated the strong convergence rates of numerical methods under less and less restrictive conditions \cite{deng2020semiimplicit, li2021truncated, liu2020truncated, liu2023milstein}. For SDEs driven by time-changed L\'{e}vy noise, \cite{chen2026strong} establishes the strong convergence order of stochastic theta method under the global Lipschitz and linear growth conditions. Such results primarily describe how well numerical solutions approximate exact solutions in a pathwise sense. In many applications, however, one is more concerned with the effective approximation of expected functionals and other distribution-related quantities, for which weak convergence analysis provides a natural measure of accuracy \cite{milstein2021stochastic, platen2010numerical}. For SDEs driven by time-changed L\'{e}vy noise, the weak convergence theory is currently available only for the Euler--Maruyama method \cite{chen2026strong}. This naturally raises the question of whether similar weak convergence results remain valid for more general numerical methods. Among them, the stochastic theta method, as a parameterized family unifying explicit and implicit method, offers distinct advantages for problems involving stiff drift terms or enhanced stability requirements. However, establishing its weak convergence order is mathematically challenging because of the intricate interplay between the implicit correction and the jump perturbations. In this paper, we address this problem by establishing the weak convergence order of the stochastic theta method for SDEs driven by time-changed L\'{e}vy noise.

To carry out the weak convergence analysis, we consider the corresponding non-time-changed L\'{e}vy SDEs on the infinite time interval $[0,+\infty)$,
\begin{equation}\label{eq:SDE}
\left\{\begin{aligned}
      \diff{Y(t)}
      =&~
	  f(t,Y(t-))\diff{t}
	  +  
	  g(t, Y(t-))\diff{W(t)}
	  +
	  \int_{|z|<c} h(t,Y(t-),z)
      \,\widetilde{N}(\dif{z},\dif{t}),
	  \quad t \geq 0,
      \\
      Y(0) =&~ x_{0},
\end{aligned}\right.
\end{equation} 
 and its stochastic theta discretization for $\theta \in [0,1]$,
\begin{align}\label{eq:Stmethod}
	  Y_{n+1}
	  =&~
	  Y_{n} + \theta f(t_{n+1},Y_{n+1})\Delta
	  +
	  (1-\theta) f(t_{n},Y_{n})\Delta
      +
      g(t_{n},Y_{n})\Delta W_{n}  \notag
      \\&~+
      \int_{t_{n}}^{t_{n+1}}\int_{|z|<c}
      h(t_n,Y_n,z) \,\widetilde{N}(\dif{z},\dif{s}),
      \quad n = 0,1,2,\cdots
\end{align}
with $Y_0=Y(0)$, $\Delta \in (0,1)$, $t_{n} := n\Delta$ for $n \in \N$, and $\Delta W_{n} := W(t_{n+1}) - W(t_n)$. The implicit drift term $f(t_{n+1},Y_{n+1})$ makes the numerical update depend on the non-$\F_{t_n}$-adapted quantity $Y_{n+1}$, so that the usual It\^{o}-formula-based local weak expansion available for Euler--Maruyama method can no longer be derived directly for the stochastic theta method; see \cite{chen2026strong} for more details. Our strategy is therefore to separate the analysis into an explicit reference step and an implicit correction step. More precisely, we first introduce a frozen Euler approximation $\overline{Y}_{n+1}$, obtained by freezing the coefficients at $(t_n,Y_n)$, as an explicit reference step, for which a standard generator-based weak expansion can be derived as follows. The Brownian part is handled by a third-order Taylor expansion together with the fact that all odd moments of a Gaussian random variable vanish, which yields a remainder of order $\mathcal{O}(\Delta^{2})$. Meanwhile, the jump part is treated under the finite activity condition $\mu(\{|z| < c\}) < +\infty$ by conditioning on the number of jumps, where the probability of two or more jumps in one step is $\mathcal{O}(\Delta^{2})$. Consequently, 
\begin{equation}\label{eq:frozenEuler}
      \E\big[u(t_{n+1},\overline{Y}_{n+1}) 
      \mid \F_{t_n}\big] =u(t_n, Y_n) 
      + \mathcal{O}\big((1+|Y_{n}|^{q})\Delta^{2}\big),  
\end{equation}
where $u(s,x) := \E[\Phi(Y(t)) \mid Y(s) = x]$ for $0 \leq s \leq t, x \in \R^{d}$, with $\Phi \in C_{p}^{4}(\R^{d};\R)$ denotes the unique solution of the Kolmogorov backward partial integro differential equations (KB-PIDEs) \eqref{eq:Kbpide}. Here and below, $\mathcal{O}\big((1+|Y_{n}|^{q}) \Delta^{2}\big)$ denotes an $\F_{t_n}$-measurable random variable $R_n$ such that
$|R_n| \leq C(1+|Y_{n}|^{q}) \Delta^{2}$ holds almost surely for some deterministic constant $C > 0$ independent of $n$ and $\Delta$. We then compare the stochastic theta approximation $Y_{n+1}$ with the frozen Euler step $\overline{Y}_{n+1}$. By linearizing the implicit equation and using a small-step absorption argument, we show that the implicit correction $Y_{n+1}-\overline{Y}_{n+1}$ contributes only a higher-order weak term:
\begin{equation}\label{eq:implicitcorrection}
      \E\big[u(t_{n+1},Y_{n+1}) \mid \F_{t_n}\big]
      =
      \E\big[u(t_{n+1},\overline{Y}_{n+1}) 
      \mid \F_{t_n}\big]
      +
      \mathcal{O}\big((1+|Y_{n}|^{q})\Delta^{2}\big).    
\end{equation}
This step captures the essential technical difficulty introduced by the implicit drift term and provides the key control needed beyond explicit schemes. Combining \eqref{eq:frozenEuler} and \eqref{eq:implicitcorrection} yields the desired one-step weak consistency estimate for the stochastic theta method. A telescoping argument together with appropriate moment bounds then gives a global weak error of order one for the corresponding non-time-changed L\'{e}vy SDEs on the infinite time interval. Incorporating the approximation of the inverse subordinator and the duality principle, we further obtain weak convergence of order one for the original time-changed equation in Theorem \ref{th:weakorder}. In this way, the present analysis advances the currently available weak convergence theory beyond the Euler--Maruyama method to the more general class of stochastic theta method, and also provides a workable route from the underlying non-time-changed L\'{e}vy SDEs to the corresponding time-changed problem.


The remainder of this paper is organized as follows. Section \ref{sect:asspre} introduces the assumptions, notation, and preliminary results used throughout the paper. In Section \ref{sec:weakorder}, we establish the weak convergence order of the stochastic theta method for the considered time-changed L\'{e}vy SDE, with the corresponding non-time-changed L\'{e}vy equation on the infinite time interval serving as the core analytical step. Section \ref{sect:numexp} presents numerical experiments to illustrate the theoretical results. Finally, Section \ref{sec:conclusion} concludes the paper.

\section{Assumptions and preliminary results}\label{sect:asspre}
Throughout this paper, we use the following notations unless otherwise specified. Let $\R = (-\infty,+\infty)$, $\R^{+} = (0,+\infty)$, $\N = \{0,1,2,\cdots\}$ and $\N^{+} = \{1,2,\cdots\}$. For any $a,b \in \R$, we denote $a \wedge b := \min\{a, b\}$. Let $\langle x,y \rangle$ denote the inner product of vectors $x,y \in \R^d$ and $|\cdot| := \sqrt{\langle \cdot,\cdot \rangle}$ be the corresponding Euclidean norm in $\R^d$. By $A^{\top}$ we denote the transpose of a vector in $\R^{d}$ or a matrix in $\R^{d \times m}$. Following the same notation as the vector norm, $|A| := \sqrt{\tr{(A^{\top}A)}}$ denotes the trace norm of a matrix $A \in \R^{d \times m}$. For simplicity, the letter $C$ denotes a generic positive constant that is independent of $\Delta \in (0,1), t \geq 0$ and varies for each appearance.

%
%
Let $(\Omega,\F,\P,\{\F_{t}\}_{t \geq 0})$ be a complete filtered probability space  with the filtration $\{\F_{t}\}_{t \geq 0}$ satisfying the usual conditions.  Let $\{D(t)\}_{t \geq 0}$ be an $\{\F_{t}\}_{t \geq 0}$-adapted subordinator with Laplace exponent $\phi$ and L\'{e}vy measure $\nu$, i.e., $\{D(t)\}_{t \geq 0}$ is a one-dimensional non-decreasing L\'{e}vy process with c\`{a}dl\`{a}g paths starting at $0$ with the Laplace transform
\begin{equation}\label{eq:2.1}
	  \E\big[e^{-\lambda D(t)}\big]
	  =
	  e^{-t\phi(\lambda)},
	  \quad \lambda > 0, \ t \geq 0,
\end{equation}
where the Laplace exponent $\phi(\lambda) := a\lambda + \int_{0}^{+\infty}(1-e^{-\lambda x}) \,\nu(\dif{x}), \lambda > 0, a \geq 0$ with $\int_{0}^{+\infty} (x \wedge 1) \,\nu(\dif{x}) <+\infty$. We assume that L\'{e}vy measure $\nu$ is infinite, i.e., $\nu((0,+\infty)) = +\infty$, which implies that $\{D(t)\}_{t \geq 0}$ has strictly increasing paths with infinitely many jumps (see, e.g., \cite[Theorem 21.3]{sato1999levy}). Defining the inverse subordinator of $\{D(t)\}_{t \geq 0}$ by
$$E(t) := \inf\big\{s \geq 0 : D(s) > t \big\}, \quad t \geq 0,$$
it follows from $\{D(t)\}_{t \geq 0}$ having strictly increasing paths that $t \mapsto E(t)$ is continuous and non-decreasing almost surely. Besides, the inverse subordinator $\{E(t)\}_{t \geq 0}$ is a continuous $\{\F_{t}\}_{t \geq 0}$-adapted time change (see, e.g., \cite[Lemma 2.7]{kobayashi2011stochastic}), and hence the time-changed filtration $\{\F_{E(t)}\}_{t \geq 0}$ is well defined. Moreover, \cite[Lemma 2.3]{deng2020semiimplicit} indicates that for any $\lambda > 0$ and $t \geq 0$, there exists $C > 0$, depending on $\lambda$ but independent of $t$, such that 
\begin{equation}\label{eq:estimateEelambdaEt}
      \E\big[e^{\lambda E(t)}\big] \leq Ce^{Ct}.  
\end{equation}

Concerning the underlying model \eqref{eq:TCSDE}, denote $X(t-) := \lim_{s \uparrow t}X(s), t > 0$. Let $\{W(t)\}_{t \geq 0}$ be an $m$-dimensional standard Brownian motion on $(\Omega,\F,\P,\{\F_{t}\}_{t \geq 0})$. Assume that $\widetilde{N}(\dif{z},\dif{t}) := N(\dif{z},\dif{t}) - \mu(\dif{z})\dif{t}$ is the compensator of $\{\F_{t}\}_{t \geq 0}$-adapted Poisson random measure $N(\dif{z},\dif{t})$ defined on $(\Omega \times (\R^{d} \backslash \{0\}) \times \R^{+},\F \times \mathcal{B}((\R^{d} \backslash \{0\})) \times \mathcal{B}(\R^{+}))$, where $\mu$ is a L\'{e}vy measure satisfying $\int_{\R^{d} \backslash \{0\}} \big(|z|^{2} \wedge 1\big) \,\mu(\dif{z}) < +\infty$ and $\mu(\{|z| < c\}) < +\infty$ for any $c > 0$. At last, we always assume that the processes $\{D(t)\}_{t \geq 0}$, $\{W(t)\}_{t \geq 0}$ and $\{N(\cdot,t)\}_{t \geq 0}$ are mutually independent. To establish the well-posedness of \eqref{eq:TCSDE}, we first assume that the coefficients $f \colon \R^{+} \times \R^{d} \to \R^{d}, g  \colon \R^{+} \times \R^{d} \to \R^{d \times m}$ and $h \colon  \R^{+} \times \R^{d} \times \R^{d} \to \R^{d}$ satisfy the following global Lipschitz and linear growth conditions.

\begin{assumption}\label{ass:globallinear}
      There exist constants $L > 0$ and $C > 0$ such that for any $t \geq 0$ and $x,y \in \R^{d}$,
	   \begin{equation*}
			|f(t,x)-f(t,y)| \leq L|x-y|,
	   \end{equation*}
      and 
	   \begin{equation*}
			|g(t,x)-g(t,y)|^{2}
			+
			\int_{|z|<c} |h(t,x,z) - h(t,y,z)|^{2} \,\mu(\dif{z})
			\leq
			C|x-y|^{2}.
	   \end{equation*}
      Besides, there exists $C > 0$ such that for any $t \geq 0$ and $x \in \R^{d}$,
      \begin{equation*}
			|f(t,x)|^{2} + |g(t,x)|^{2}
			+
			\int_{|z|<c} |h(t,x,z)|^{2} \,\mu(\dif{z})
			\leq
			C(1+|x|^{2}).
	  \end{equation*}
\end{assumption}

Under Assumption \ref{ass:globallinear}, standard results (see, e.g., \cite[Theorem 6.2.3]{applebaum2009levy}) imply that \eqref{eq:SDE} possesses a unique c\`{a}dl\`{a}g $\{\F_t\}_{t \geq 0}$-adapted solution $\{Y(t)\}_{t \geq 0}$, given by
\begin{align}\label{eq:SDEintegral}
      Y(t)
	  =&~
	  x_{0} + \int_{0}^{t} f(s,Y(s-)) \diff{s}
	  +
      \int_{0}^{t} g(s, Y(s-)) \diff{W(s)} \notag
	  \\&~+
	  \int_{0}^{t} \int_{|z|<c} h(s,Y(s-),z)
      \,\widetilde{N}(\dif{z},\dif{s}), \quad t \geq 0. 
\end{align} 
Let $\mathcal{G}_{t} := \F_{E{(t)}}$ for $t \geq 0$ and denote $\mathcal{L}(\mathcal{G}_{t})$ the class of left continuous with right limits and 
$\{\mathcal{G}_{t}\}_{t \geq 0}$-adapted  processes.

\begin{assumption}\label{ass:techniquecond}
      If $\{X(t)\}_{t \geq 0}$ is a c\`{a}dl\`{a}g and $\{\mathcal{G}_{t}\}_{t \geq 0}$-adapted stochastic process, then
      $$f(E(t),X(t-)),g(E(t),X(t-)),h(E(t),X(t-),z)
        \in \mathcal{L}(\mathcal{G}_t),
        \quad z \in \R^{d}, t \geq 0.$$
\end{assumption}

Assumption \ref{ass:techniquecond} ensures that the time-changed integrands in \eqref{eq:TCSDE} are c\`{a}dl\`{a}g $\{\mathcal{G}_{t}\}_{t \geq 0}$-adapted  processes, so that the corresponding stochastic integrals are well-defined \cite{kobayashi2011stochastic}. Combining \eqref{eq:SDEintegral}, Assumption \ref{ass:techniquecond}, and \cite[Lemma 4.1]{kobayashi2011stochastic}, we conclude that \eqref{eq:TCSDE} admits a unique c\`{a}dl\`{a}g
$\{\mathcal{G}_{t}\}_{t \geq 0}$-adapted
solution $\{X(t)\}_{t \in [0,T]}$, described by
\begin{align}\label{eq:TCSDEintegral}
      X(t)
	   =&~
	   x_{0} + \int_{0}^{t} f(E(s),X(s-)) \diff{E(s)}
	   +
	   \int_{0}^{t} g(E(s),X(s-)) \diff{W(E(s))} \notag
	   \\&~+ 
	   \int_{0}^{t} \int_{|z|<c} 
	   h(E(s),X(s-),z) \,\widetilde{N}(\dif{z},\dif{E(s)}).
      \quad t \in [0,T],
\end{align}
For notational convenience, we write $X(t)$ and $Y(t)$ for $X(t-)$ and $Y(t-)$, respectively; see \cite[Remark 2.1]{dareiotis2016tamed}. The following duality principle (see, e.g., \cite[Theorem 4.2]{kobayashi2011stochastic} and \cite[Section 3.3]{ni2020study}) links the time-changed equation \eqref{eq:TCSDE} with the corresponding non-time-changed equations \eqref{eq:SDE}.

\begin{lemma}\label{lem:dualityprinciple}
      Suppose that Assumptions \ref{ass:globallinear} and \ref{ass:techniquecond} hold. Then we have the following conclusions:
	  \begin{itemize}
            \item [(1)] If $\{Y(t)\}_{t \geq 0}$ satisfies \eqref{eq:SDE}, then $\{Y(E(t))\}_{t \geq 0}$ satisfies \eqref{eq:TCSDE}.
				
            \item [(2)] If $\{X(t)\}_{t \geq 0}$ satisfies \eqref{eq:TCSDE}, then $\{X(D(t))\}_{t \geq 0}$ satisfies \eqref{eq:SDE}.
      \end{itemize}
\end{lemma}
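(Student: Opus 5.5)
The plan is to use the change-of-variables formula for stochastic integrals driven by time-changed semimartingales, \cite[Theorem 3.1]{kobayashi2011stochastic}, applied path by path through the continuous time change $E$. Two facts from Section \ref{sect:asspre} make this possible: $E$ is continuous and $\{\F_t\}$-adapted, and $D$ is strictly increasing, so that $E(D(t))=t$ for all $t\geq 0$. Write $\mathcal{G}_t=\F_{E(t)}$. The key point is that $W(E(\cdot))$, $E(\cdot)$ and the time-changed compensated measure $\widetilde N(\dif z,\dif E(\cdot))$ are $\{\mathcal{G}_t\}$-semimartingales. Moreover, for a $\{\F_t\}$-predictable (or left-continuous adapted) integrand $H$ one has
$$\int_0^t H(E(s)-)\,\dif Z(E(s))=\int_0^{E(t)} H(s)\,\dif Z(s)$$
almost surely, for $Z\in\{s,W,\widetilde N\}$. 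Here $W$ is continuous, and $E$ is constant on each interval $[D(r-),D(r)]$, so there is no mismatch from jumps of $D$. In the jump case we also need that $Y$ is constant wherever $E$ is constant.

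\textbf{Part (1).} Set $X(t):=Y(E(t))$ and write \eqref{eq:SDEintegral} at time $E(t)$. Each integral on $[0,E(t)]$ is then rewritten by the change-of-variables identity with integrand $H(s)=f(s,Y(s-))$, $g(s,Y(s-))$, $h(s,Y(s-),z)$. This requires checking two things.
\begin{itemize}
\item[(i)] $Y(E(s)-)=Y(E(s-))$ in the appropriate sense, i.e. $X(s-)=Y(E(s)-)$. If $E$ is constant near $s$ this is trivial. Otherwise it follows from continuity of $E$ together with $Y$ being c\`adl\`ag.
\item[(ii)] The time-changed integrands $f(E(s),X(s-))$ and its analogues lie in $\mathcal{L}(\mathcal{G}_t)$, which is exactly what Assumption \ref{ass:techniquecond} provides.
\end{itemize}
The result is \eqref{eq:TCSDEintegral}. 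Adaptedness of $X$ to $\{\mathcal{G}_t\}$ follows because $Y$ is $\{\F_t\}$-adapted and optional sampling applies at the stopping times $E(t)$.

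\textbf{Part (2).} Here we go in the reverse direction. Substitute $t\mapsto D(t)$ in \eqref{eq:TCSDEintegral} and use $E(D(t))=t$. This turns $\int_0^{D(t)}\cdots\dif Z(E(s))$ into $\int_0^{t}\cdots\dif Z(s)$, with integrands $f(E(D(s)),X(D(s)-))=f(s,X(D(s-)))$ and similarly for $g$ and $h$.

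The main obstacle is justifying this second substitution, because $D$ has jumps. The way around it is to note that on each jump interval $[D(s-),D(s)]$ the clock $E$ is constant, equal to $s$. Consequently all three integrators, and hence $X$, are constant there. This gives $X(D(s-))=X(D(s)-)$ and shows that $X\circ D$ is c\`adl\`ag and $\{\F_t\}$-adapted, using $\mathcal{G}_{D(t)}\subseteq\F_t$ from $E(D(t))=t$.

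So $X\circ D$ solves \eqref{eq:SDE}. Uniqueness under Assumption \ref{ass:globallinear} then identifies it with $Y$, which also gives consistency with part (1). Since this is a known result, I would present the argument as a careful citation of \cite[Theorem 4.2]{kobayashi2011stochastic}, extended to the Poisson random measure integrator via \cite[Section 3.3]{ni2020study}.
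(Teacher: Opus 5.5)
Your proposal is correct and matches the paper, which does not prove the lemma itself but cites \cite[Theorem 4.2]{kobayashi2011stochastic} and \cite[Section 3.3]{ni2020study}; your sketch simply unpacks the change-of-variables and synchronization argument behind that citation. The only flaw is in the left limits. The identity $X(D(s-))=X(D(s)-)$ holds at jump times of $D$ but fails at jump times of $Y$, where $X(D(s-))=Y(s)\neq Y(s-)$, so the substituted integrand should be $f(s,X(D(s-)-))=f(s,Y(s-))$, which is what the change-of-variables formula for $D$ actually gives. Similarly, in (i) the identity $X(s-)=Y(E(s)-)$ is not automatic when $E$ is flat to the left of $s$, but such $s$ are not charged by the time-changed integrators, so this is harmless.
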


To proceed, we impose a strengthened linear growth condition on the jump coefficient.

\begin{assumption}\label{As:fghgrowthcond}
      For any $p \geq 2$, there exists a constant $C > 0$ such that for all $t \geq 0$ and $x \in \R^{d}$,  
	  \begin{equation*}
            \int_{|z|<c}|h(t,x,z)|^{p} \,\mu(\dif{z})
		    \leq
		    C(1+|x|^{p}).
	  \end{equation*}
\end{assumption}

We next recall two auxiliary estimates for the process $\{Y(t)\}_{t \geq 0}$, namely a higher-order moment bound and a short-time weak increment estimate; see \cite[Lemmas 4.1 and 4.3]{chen2026strong} for the proofs. For later use, let $C_{p}^{k}(\R^{d};\R)$ denote the class of $k \in \{1,2,\cdots\}$-times continuously differentiable functions from $\R^{d}$ to $\R$ such that the function itself and all its partial derivatives up to order $k$ have at most polynomial growth.

\begin{lemma}\label{lm:Ytpexactbound}
      Suppose that Assumptions \ref{ass:globallinear} and \ref{As:fghgrowthcond} hold. Then for any $p \geq 2$, there exists a constant $C > 0$, independent of $t$, such that
      \begin{equation*}
			\E\big[|Y(t)|^{p}\big]
			\leq 
			Ce^{Ct}, \quad t \geq 0.  
      \end{equation*}
      Morteover, for any $\Phi \in C_{p}^{2}(\R^{d};\R)$ and any $t \geq s \geq 0$, there exists a constant $C > 0$, independent of $t$, such that
      \begin{align*}
			\big|\E\big[\Phi(Y(t)) - \Phi(Y(s))\big]\big| 
			\leq 
			C(t-s)e^{Ct}.
      \end{align*}
\end{lemma}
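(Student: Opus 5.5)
The plan is to prove the moment bound first and then derive the weak increment estimate from it by It\^{o}'s formula.

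\emph{Moment bound.} Apply It\^{o}'s formula for L\'{e}vy-type semimartingales to $V(y)=(1+|y|^{2})^{p/2}$ along \eqref{eq:SDEintegral}. The drift and Brownian contributions give terms bounded by $C(1+|Y(s)|^{2})^{p/2}$, using Assumption \ref{ass:globallinear} and $|\nabla V(y)|\le C(1+|y|^2)^{(p-1)/2}$, $|\nabla^2 V(y)|\le C(1+|y|^2)^{(p-2)/2}$. For the compensated jump term, write the compensator contribution
\[
\int_{|z|<c}\big[V(y+h)-V(y)-\langle\nabla V(y),h\rangle\big]\mu(\dif z).
\]
Bound the integrand by $C\big(|y|^{p-2}|h|^2+|h|^p\big)$ via Taylor's formula with the mean-value point controlled by $|y|+|h|$. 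Then Assumption \ref{ass:globallinear} (for $|h|^2$) and Assumption \ref{As:fghgrowthcond} (for $|h|^p$) give a bound $C(1+|y|^p)$.

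To handle the local martingale parts, localize with stopping times $\tau_R=\inf\{t:|Y(t)|>R\}$ and take expectations, which gives
\[
\E[V(Y(t\wedge\tau_R))]\le V(x_0)+C\int_0^t \E[V(Y(s\wedge\tau_R))]\diff s .
\]
Gronwall's inequality then yields $\E[V(Y(t\wedge\tau_R))]\le Ce^{Ct}$ with $C$ independent of $t$ and $R$, because all constants in the assumptions are uniform in $t\ge0$. Letting $R\to\infty$ with Fatou's lemma gives the first claim.

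\emph{Weak increment.} For $\Phi\in C_p^2$, apply It\^{o}'s formula to $\Phi(Y(r))$ on $[s,t]$. After taking expectations, the stochastic integrals vanish; they are true martingales thanks to the polynomial growth of $\nabla\Phi$ and the moment bound just proved. This leaves
\[
\E\big[\Phi(Y(t))-\Phi(Y(s))\big]=\int_s^t \E\big[\mathcal{A}\Phi(r,Y(r))\big]\diff r,
\]
where the generator is
\[
\mathcal{A}\Phi=\langle f,\nabla\Phi\rangle+\tfrac12\tr(g^\top\nabla^2\Phi\, g)+\int_{|z|<c}\big[\Phi(y+h)-\Phi(y)-\langle\nabla\Phi,h\rangle\big]\mu(\dif z).
\]
Each piece is bounded by $C(1+|y|^{q})$ for some $q$. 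For the jump part, use Taylor's formula with second derivatives of polynomial growth, giving a bound $C(1+|y|^{q'}+|h|^{q'})|h|^2$, and then apply Assumption \ref{As:fghgrowthcond} with H\"older's inequality as needed. Using the moment bound, $\E[\mathcal{A}\Phi(r,Y(r))]\le Ce^{Cr}\le Ce^{Ct}$ for $r\le t$, so integrating over $[s,t]$ gives $C(t-s)e^{Ct}$.

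\emph{Main obstacle.} The delicate step is the jump integrand estimate. One must show $\int|h|^2(1+|y|+|h|)^{q}\,\mu(\dif z)\le C(1+|y|^{q+2})$. This requires splitting $(1+|y|+|h|)^q\le C(1+|y|^q+|h|^q)$ and invoking Assumption \ref{As:fghgrowthcond} with exponent $q+2$; this is exactly why that strengthened assumption is imposed. Uniformity of the constants in $t$ follows because every assumption is uniform in time.
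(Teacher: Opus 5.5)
Your proof is correct. The paper does not prove this lemma itself; it cites \cite[Lemmas 4.1 and 4.3]{chen2026strong}. Your route is the standard argument behind such results: It\^{o}'s formula for $(1+|y|^{2})^{p/2}$ with localization, Gronwall and Fatou for the moment bound, then It\^{o}'s formula for $\Phi$ with a polynomial-growth bound on the generator via Assumption \ref{As:fghgrowthcond}. The one cosmetic fix is to write the final estimate as $|\E[\mathcal{A}\Phi(r,Y(r))]|\le Ce^{Cr}$, which your pointwise bound already gives.
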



\section{Weak convergence order of stochastic theta method}\label{sec:weakorder}
This section analyzes the weak convergence order of numerical approximation for the time-changed equation \eqref{eq:TCSDE} by studying the stochastic theta method \eqref{eq:Stmethod} for the corresponding non-time-changed equation \eqref{eq:SDE}. For any $\theta \in [0,1]$, the considered method is well-posed provided that $\theta{L}\Delta \leq \frac{1}{2}$; see \cite[Theorem C.2]{stuart1996dynamical}. We begin by establishing bounded high-order moments of the numerical solution $\{Y_{n}\}_{n \in \N}$, which forms a basic ingredient of the subsequent weak error analysis.

\subsection{$p$th-moment boundedness of the stochastic theta approximation}
To establish $p$th-moment boundedness of the stochastic theta approximation $\{Y_{n}\}_{n \in \N}$, we first study the inverse of the mapping
\begin{equation}\label{eq:mapFtx}
      F_{t}(x) 
      := 
      x - \theta\Delta f(t,x),
      \quad t \geq 0, x \in \R^{d},     
\end{equation}
for which the following lemma provides the global Lipschitz continuity and linear growth estimates.

\begin{lemma}
      Suppose that Assumption \ref{ass:globallinear} holds, and let $\theta{L}\Delta \leq \frac{1}{2}$ with $\theta \in [0,1]$.  
      Then $F_{t}(x)$ is invertible with its inverse $F_{t}^{-1}(x)$ satisfying that for any $t \geq 0$ and $x,y \in \R^{d}$,
      \begin{gather}
            \label{eq:Ftinvlip}
            |F_{t}^{-1}(x)-F_{t}^{-1}(y)|
            \leq
            \frac{1}{1-\theta\Delta{L}}|x-y|,        
            \\\label{eq:Ftinvlipxxx}
            |F_{t}^{-1}(x)|
            \leq
            \big(1 + 2\theta{L}\Delta\big)|x| 
            + C \Delta.          
      \end{gather}
\end{lemma}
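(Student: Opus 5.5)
Invertibility follows from a contraction argument, the Lipschitz bound \eqref{eq:Ftinvlip} from a reverse triangle inequality, and the growth bound \eqref{eq:Ftinvlipxxx} by comparing $F_t^{-1}(x)$ with $F_t^{-1}(0)$. The only ingredients are the Lipschitz condition on $f$ and the linear growth bound from Assumption \ref{ass:globallinear}.

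\emph{Invertibility.} Fix $t \geq 0$ and $y \in \R^{d}$. Solving $F_t(x)=y$ is equivalent to finding a fixed point of $T_y(x) := y + \theta\Delta f(t,x)$. By the Lipschitz condition on $f$, $T_y$ is Lipschitz with constant $\theta\Delta L \le \frac12 < 1$. Banach's fixed point theorem on $\R^d$ therefore gives a unique solution, so $F_t$ is a bijection. (Alternatively, one may cite \cite[Theorem C.2]{stuart1996dynamical}.)

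\emph{Lipschitz bound \eqref{eq:Ftinvlip}.} Set $u=F_t^{-1}(x)$ and $v=F_t^{-1}(y)$. Then
\[
|x-y| = |u-v-\theta\Delta(f(t,u)-f(t,v))| \ge |u-v| - \theta\Delta L|u-v| = (1-\theta\Delta L)|u-v|.
\]
Dividing by $1-\theta\Delta L \ge \frac12 > 0$ gives \eqref{eq:Ftinvlip}.

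\emph{Growth bound \eqref{eq:Ftinvlipxxx}.} Apply \eqref{eq:Ftinvlip} with $y=0$:
\[
|F_t^{-1}(x)| \le \frac{1}{1-\theta\Delta L}|x| + |F_t^{-1}(0)|.
\]
Since $\theta L\Delta\le\frac12$, we have $\frac{1}{1-a} = 1 + \frac{a}{1-a} \le 1+2a$ with $a=\theta L\Delta$. This yields the coefficient $1+2\theta L\Delta$.

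It remains to bound $w := F_t^{-1}(0)$, which satisfies $w = \theta\Delta f(t,w)$. The linear growth condition gives $|f(t,w)|\le C(1+|w|)$, so
\[
|w| \le \theta\Delta C(1+|w|).
\]
This is the one point needing care. If $\theta\Delta C$ is at most $\frac12$, the $|w|$ term can be absorbed into the left side, giving $|w|\le 2C\Delta$. To avoid an extra step-size restriction, I would instead use $|f(t,w)| \le |f(t,0)| + L|w|$, where $|f(t,0)|\le \sqrt{C}$ by linear growth. Then $|w|\le \theta\Delta\sqrt{C} + \theta\Delta L|w|$, and since $\theta\Delta L \le \frac12$ this gives $|w| \le 2\sqrt{C}\,\Delta$. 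The resulting constant is uniform in $t$, as required for the generic constant $C$ in \eqref{eq:Ftinvlipxxx}. Combining the two displays completes the proof.
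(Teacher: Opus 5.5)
Your proof is correct and follows the paper's route: a Lipschitz bound for $F_t^{-1}$, the inequality $\frac{1}{1-a}\le 1+2a$ for $a\le\frac12$, and a bound on $F_t^{-1}(0)$ obtained by splitting $f(t,w)=(f(t,w)-f(t,0))+f(t,0)$. The only differences are cosmetic. You obtain invertibility from Banach's fixed point theorem and the Lipschitz bound from the reverse triangle inequality. The paper instead uses the monotonicity estimate $\langle x-y,F_t(x)-F_t(y)\rangle\ge(1-\theta\Delta L)|x-y|^2$, which by itself only gives injectivity, and relies on \cite[Theorem C.2]{stuart1996dynamical} for existence, so your argument is slightly more self-contained.
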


\begin{proof}
      Assumption \ref{ass:globallinear} implies that for any $t \geq 0$ and $x,y \in \R^{d}$,
      \begin{align*}
            \big\langle x-y,F_{t}(x)-F_{t}(y) \big\rangle
            =
            |x-y|^{2} - \theta\Delta\big\langle x-y,f(t,x)-f(t,y) \big\rangle
            \geq
            (1-\theta\Delta{L})|x-y|^{2},
      \end{align*}
      and accordingly $|x-y| \leq \frac{1}{1-\theta\Delta{L}} |F_{t}(x)-F_{t}(y)|$. It follows that $F_{t}(x)$ is invertible and
      \begin{equation*}
            |F_{t}^{-1}(x)-F_{t}^{-1}(y)|
            \leq
            \frac{1}{1-\theta\Delta{L}}
            |F_{t}(F_{t}^{-1}(x))-F_{t}(F_{t}^{-1}(y))| 
            =
            \frac{1}{1-\theta\Delta{L}}|x-y|,          
      \end{equation*}
      which proves \eqref{eq:Ftinvlip}. Since $F_{t}(x) = 0$ admits a unique solution $x_{\Delta}(t) := F_{t}^{-1}(0)$ (see, e.g., \cite[Theorem C.2]{stuart1996dynamical}), using $x_{\Delta}(t) = \theta\Delta f(t,x_{\Delta}(t))$ and Assumption \ref{ass:globallinear} yields
      \begin{align*}
            \big|x_{\Delta}(t)\big|^{2}
            =&~
            \theta\Delta \big\langle x_{\Delta}(t), 
            f(t,x_{\Delta}(t)) - f(t,0) \big\rangle
            +
            \theta\Delta \big\langle x_{\Delta}(t), f(t,0) \big\rangle  
            \\\leq&~ 
            \theta\Delta L\big|x_{\Delta}(t)\big|^{2} 
            + C\theta\Delta\big|x_{\Delta}(t)\big|.   
      \end{align*}
      When $x_{\Delta}(t) \neq 0$, we have
      $\big(1 - \theta\Delta L\big)|x_{\Delta}(t)| \leq C\theta\Delta$ and thus
      \begin{align}\label{eq:Ftinv0}
            |F_{t}^{-1}(0)| = |x_{\Delta}(t)| 
            \leq \frac{C\theta\Delta}{1 - \theta\Delta L}
            \leq 
            2 C \theta \Delta 
      \end{align}
      because of $\theta{L}\Delta \leq \frac{1}{2}$. Besides, \eqref{eq:Ftinv0} obviously holds 
      when $x_{\Delta}(t) = 0$. Since $\frac{1}{1-x} \leq 1 + 2x$ for $x \in [0,1/2]$ and $\theta{L}\Delta \leq \frac{1}{2}$, we have $\frac{1}{1-\theta{L}\Delta} \leq 1 + 2\theta{L}\Delta$. Together with \eqref{eq:Ftinvlip} and \eqref{eq:Ftinv0}, we obtain \eqref{eq:Ftinvlipxxx} and complete the proof.
\end{proof}

Furthermore, the following two elementary lemmas regarding algebraic inequalities will be frequently utilized in the subsequent moment estimates.
\begin{lemma}\label{lem:uvp}
      For any $p > 1$ and $\varepsilon \in (0,1]$, there exists a constant $C(p) := \big(\frac{2^{-\frac{p}{p-1}}}{p-1}\big)^{1-p} > 0$ such that for any $u,v \geq 0$,
      \begin{equation*}
            (u+v)^{p}
            \leq
            (1+\varepsilon)u^{p} + C(p)\varepsilon^{-(p-1)}v^{p}.
      \end{equation*}
\end{lemma}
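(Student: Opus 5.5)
The inequality is trivial when $v=0$, and both sides are positively homogeneous of degree $p$. So I will normalize: for $u=0$ it reads $v^{p}\le C(p)\varepsilon^{-(p-1)}v^{p}$, and for $u>0$ I divide by $u^{p}$ and set $s:=v/u\ge 0$. The claim then reduces to the one-variable inequality
\begin{equation*}
      \psi(s) := (1+s)^{p} - C(p)\varepsilon^{-(p-1)}s^{p} \leq 1+\varepsilon, \quad s \geq 0.
\end{equation*}

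\textbf{Convexity route.} I would take the following route because it explains the shape of the constant. Write $u+v$ as a convex combination with weights $\lambda\in(0,1)$ and $1-\lambda$:
\begin{equation*}
      u+v = \lambda\cdot\frac{u}{\lambda} + (1-\lambda)\cdot\frac{v}{1-\lambda}.
\end{equation*}
Convexity of $x\mapsto x^{p}$ on $[0,\infty)$ for $p>1$ then gives
\begin{equation*}
      (u+v)^{p} \leq \lambda^{1-p}u^{p} + (1-\lambda)^{1-p}v^{p}.
\end{equation*}
Next I choose $\lambda$ so that $\lambda^{1-p}=1+\varepsilon$, i.e.\ $\lambda=(1+\varepsilon)^{-1/(p-1)}$. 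It remains to show $(1-\lambda)^{1-p}\le C(p)\varepsilon^{-(p-1)}$, which is equivalent to
\begin{equation*}
      1-\lambda \geq C(p)^{-1/(p-1)}\varepsilon.
\end{equation*}
Unwinding the definition, $C(p)^{-1/(p-1)}=\frac{2^{-p/(p-1)}}{p-1}$.

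\textbf{Main obstacle: the lower bound on $1-\lambda$.} We need $1-(1+\varepsilon)^{-1/(p-1)}\ge \frac{2^{-p/(p-1)}}{p-1}\varepsilon$ for $\varepsilon\in(0,1]$. Set $\kappa(\varepsilon):=1-(1+\varepsilon)^{-1/(p-1)}$. This function is concave in $\varepsilon$, since its second derivative is $-\frac{1}{p-1}\cdot\frac{p}{p-1}(1+\varepsilon)^{-1/(p-1)-2}<0$, and $\kappa(0)=0$. On $[0,1]$ a concave function lies above its chord, so $\kappa(\varepsilon)\ge \kappa(1)\varepsilon$.

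This leaves comparing $\kappa(1)=1-2^{-1/(p-1)}$ with $\frac{2^{-p/(p-1)}}{p-1}$. I would use the mean value theorem instead: $\kappa'(\xi)=\frac{1}{p-1}(1+\xi)^{-p/(p-1)}$, which is decreasing in $\xi$. Hence for $\xi\in[0,1]$,
\begin{equation*}
      \kappa'(\xi)\ge \frac{1}{p-1}2^{-p/(p-1)}.
\end{equation*}
Then $\kappa(\varepsilon)=\kappa'(\xi)\varepsilon\ge \frac{2^{-p/(p-1)}}{p-1}\varepsilon$ for some $\xi\in(0,\varepsilon)\subset(0,1]$. This is exactly the required bound, and it shows where the stated constant $C(p)$ comes from. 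Substituting back completes the argument. The only care needed is the case $u=0$, which is covered because $C(p)\varepsilon^{-(p-1)}\ge1$; indeed the inequality above gives $(1-\lambda)^{1-p}\ge1$.
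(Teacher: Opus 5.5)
Your proof is correct and follows the paper's argument essentially step for step. You use the convexity bound $(u+v)^{p}\le \lambda^{1-p}u^{p}+(1-\lambda)^{1-p}v^{p}$ (the paper simply swaps the roles of $\lambda$ and $1-\lambda$), choose the weight so that the $u^{p}$-coefficient equals $1+\varepsilon$, and apply the mean value theorem lower bound $1-(1+\varepsilon)^{-1/(p-1)}\ge \frac{2^{-p/(p-1)}}{p-1}\varepsilon$, which yields exactly $C(p)$. The opening reduction to $\psi(s)$ and the concavity remark are never used and can be dropped. The case $u=0$ also needs no separate treatment, since the convexity inequality already holds for all $u,v\ge 0$.
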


\begin{proof}
      Since the mapping $x \mapsto x^{p}$ is convex for any $x \in \R$ and $p > 1$, we have that for any $u,v \geq 0$ and $\lambda \in (0,1)$, 
      \begin{align*}
            (u+v)^{p}
            =
            \bigg((1-\lambda)\frac{u}{1-\lambda}
            + \lambda \frac{v}{\lambda}\bigg)^{p}
            \leq
            (1-\lambda)^{1-p}u^{p} + \lambda^{1-p}v^{p}.           
      \end{align*}
      For any $\varepsilon \in (0,1]$, letting $(1-\lambda)^{1-p} = 1+\varepsilon$ 
      yields $\lambda = 1 - (1+\varepsilon)^{-\frac{1}{p-1}} \in (0,1)$ because of 
      $(1+\varepsilon)^{-\frac{1}{p-1}} \in (0,1)$. Considering the function $g(\varepsilon)
      := 1 - (1+\varepsilon)^{-\rho}$ for $\varepsilon \in (0,1]$ with $\rho := \frac{1}{p-1} > 0$, 
      the mean value theorem ensures that there exists $\xi \in (0,\varepsilon) \subset [0,1]$ such that
      $g(\varepsilon) - g(0) = g'(\xi)(\varepsilon - 0)$. In view of $g(0) = 0$ and $g'(\xi) = \rho(1+\xi)^{-\rho-1} \geq \rho 2^{-\rho-1}$ due to $\xi \in (0,1)$, 
      we have $g(\varepsilon) \geq \rho 2^{-\rho-1}\varepsilon$. It follows that
      $\lambda^{1-p} \leq (\rho 2^{-\rho-1}\varepsilon)^{1-p} 
      = C(p)\varepsilon^{-(p-1)}$ with $C(p) = (\rho 2^{-\rho-1})^{1-p}$, 
      which completes the proof.
\end{proof}

\begin{lemma}\label{lem:xypCp}
      For any $p \geq 2$, there exists a constant $C(p) := p(p-1)2^{p-3} > 0$ such that
      \begin{align*}
            |x+y|^{p}
            \leq |x|^{p} + p|x|^{p-2}\langle x,y \rangle
            + 
            C(p)\big(|x|^{p-2}|y|^{2} + |y|^{p}\big),
            \quad x,y \in \R^{d}.
      \end{align*}
\end{lemma}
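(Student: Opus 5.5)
The plan is to prove the inequality with a second-order Taylor expansion of $\phi(s) := |x+sy|^{p}$ on $s \in [0,1]$, and then control the second derivative along the segment by elementary bounds. Since $p \geq 2$, the map $z \mapsto |z|^{p}$ is $C^{2}$ on $\R^{d}$; at $z=0$ the Hessian vanishes when $p>2$ and equals $2I$ when $p=2$. Hence $\phi \in C^{2}([0,1])$, and Taylor's formula with integral remainder gives
\begin{equation*}
      |x+y|^{p} = |x|^{p} + p|x|^{p-2}\langle x,y\rangle + \int_{0}^{1}(1-s)\phi''(s)\diff{s},
\end{equation*}
using $\phi(0)=|x|^{p}$ and $\phi'(0)=p|x|^{p-2}\langle x,y\rangle$. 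This produces the first two terms of the claimed bound exactly, so everything reduces to bounding the remainder.

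Write $z_{s}:=x+sy$. A direct computation gives
\begin{equation*}
      \phi''(s) = p|z_{s}|^{p-2}|y|^{2} + p(p-2)|z_{s}|^{p-4}\langle z_{s},y\rangle^{2} \leq p(p-1)|z_{s}|^{p-2}|y|^{2},
\end{equation*}
by Cauchy--Schwarz, and this bound remains valid at $z_{s}=0$. Since $\int_{0}^{1}(1-s)\diff{s} = \tfrac12$, the remainder is at most $\tfrac{p(p-1)}{2}\sup_{s\in[0,1]}|z_{s}|^{p-2}|y|^{2}$.

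The last step is to use $|z_{s}| \leq |x|+|y|$ together with the elementary inequality $(a+b)^{q} \leq 2^{(q-1)\vee 0}(a^{q}+b^{q})$ for $q = p-2 \geq 0$. For $p-2\geq 1$ this gives $|z_{s}|^{p-2} \leq 2^{p-3}(|x|^{p-2}+|y|^{p-2})$, and the remainder is then bounded by $\tfrac{p(p-1)}{2}2^{p-3}(|x|^{p-2}|y|^{2}+|y|^{p})$. This is within the stated constant $C(p)=p(p-1)2^{p-3}$, with a spare factor $\tfrac12$.

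The main obstacle is the range $p\in[2,3)$. There subadditivity gives $|z_{s}|^{p-2}\leq |x|^{p-2}+|y|^{p-2}$, so the remainder is at most $\tfrac{p(p-1)}{2}(|x|^{p-2}|y|^{2}+|y|^{p})$. We need $\tfrac12 \leq 2^{p-3}$, which holds exactly because $p\geq 2$. So the spare factor $\tfrac12$ from the Taylor remainder covers this range as well, and the bound with $C(p)=p(p-1)2^{p-3}$ follows for all $p \geq 2$.
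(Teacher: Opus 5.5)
Your proof is correct and follows the paper's argument: Taylor's formula with integral remainder for $s\mapsto |x+sy|^{p}$, the bound $\phi''(s)\leq p(p-1)|x+sy|^{p-2}|y|^{2}$, and an elementary power inequality combined with $\int_{0}^{1}(1-s)\diff{s}=\tfrac12$. The only difference is that the paper needs no case split: it uses $(|x|+|y|)^{p-2}\leq 2^{p-2}(|x|^{p-2}+|y|^{p-2})$, which holds for all $p\geq 2$ and lands exactly on $C(p)=p(p-1)2^{p-3}$. Your sharper split bounds give a better constant for $p\geq 3$, and your remark on $C^{2}$ regularity at $x+sy=0$ makes explicit a point the paper leaves implicit.
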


\begin{proof}
      For any $p \geq 2$ and $x,y \in \R^{d}$, letting $\varphi(t) := |x+ty|^{p}, 
      t \in [0,1]$ and using the Taylor formula lead to
      \begin{align*}
            \varphi(1) 
            = \varphi(0) + \varphi'(0) 
              + \int_{0}^{1} (1-s)\varphi''(s) \diff{s}. 
      \end{align*}
      Owing to $\varphi(0) = |x|^{p}, \varphi(1) = |x+y|^{p}$, $\varphi'(t) = p|x+ty|^{p-2}\langle x+ty,y \rangle$ and
      \begin{align*}
            \varphi''(t) 
            =
            p(p-2)|x+ty|^{p-4}(\langle x+ty,y \rangle)^{2}
            +
            p|x+ty|^{p-2}|y|^{2}
            \leq
            p(p-1)|x+ty|^{p-2}|y|^{2},
      \end{align*}      
      we obtain
      \begin{align*}
            |x+y|^{p}
            \leq |x|^{p} + p|x|^{p-2}\langle x,y \rangle
            + 
            p(p-1)|y|^{2}
            \int_{0}^{1} (1-s)|x+sy|^{p-2} \diff{s}.
      \end{align*}
      Since $|x+sy|^{p-2} \leq (|x|+|y|)^{p-2} \leq 2^{p-2}(|x|^{p-2}+|y|^{p-2})$ 
      for any $s \in [0,1]$ and $\int_{0}^{1} (1-s) \diff{s} = \frac{1}{2}$, 
      we immediately give the desired result and finish the proof.
\end{proof}

We are now ready to establish bounded high-order moments for the stochastic theta approximation generated by \eqref{eq:Stmethod}. For later use, we introduce the notation 
\begin{equation}\label{eq:FGHnotation}
      F_{n} := Y_{n} + (1-\theta) f(t_{n},Y_{n})\Delta, \quad
      G_{n} := g(t_{n},Y_{n})\Delta W_{n}, \quad
      H_{n} := \int_{t_{n}}^{t_{n+1}}\int_{|z|<c} 
      h(t_n,Y_n,z) \,\widetilde{N}(\dif{z},\dif{s}),
\end{equation}
so that \eqref{eq:Stmethod} can be written as  
$$Y_{n+1} = \theta f(t_{n+1},Y_{n+1}) \Delta + F_{n} + G_{n} + H_{n}.$$

\begin{proposition}\label{prop:EYnpestimate}
      Suppose that Assumptions \ref{ass:globallinear} and \ref{As:fghgrowthcond} hold, and let $\theta{L}\Delta \leq \frac{1}{2}$ with $\theta \in [0,1]$. Then for any $p \geq 2$, there exists a constant $C > 0$, independent of $n$, such that for any $n = 0,1,\cdots$,
      \begin{align}\label{eq:EYnpestimate}
            \E\big[|Y_{n}|^{p}\big]
            \leq 
            Ce^{Ct_{n}}.
      \end{align}
\end{proposition}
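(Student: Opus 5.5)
Since $Y_{n+1} - \theta\Delta f(t_{n+1},Y_{n+1}) = F_n + G_n + H_n$, we have $Y_{n+1} = F_{t_{n+1}}^{-1}(F_n + G_n + H_n)$. By \eqref{eq:Ftinvlipxxx},
\begin{equation*}
|Y_{n+1}| \leq (1+2\theta L\Delta)\,|F_n+G_n+H_n| + C\Delta .
\end{equation*}
The plan is therefore to derive a one-step recursion of the form
\begin{equation*}
\E\big[|Y_{n+1}|^{p}\big] \leq (1+C\Delta)\,\E\big[|Y_n|^{p}\big] + C\Delta
\end{equation*}
and iterate it. Iteration gives $\E[|Y_n|^p] \leq (1+C\Delta)^n\big(|x_0|^p + 1\big) \leq Ce^{Ct_n}$, using $(1+C\Delta)^n \leq e^{Cn\Delta}$.

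First, apply Lemma \ref{lem:uvp} with $u = (1+2\theta L\Delta)|F_n+G_n+H_n|$, $v = C\Delta$ and $\varepsilon = \Delta$. This gives
\begin{equation*}
|Y_{n+1}|^p \leq (1+\Delta)(1+2\theta L\Delta)^p\,|F_n+G_n+H_n|^p + C\Delta^{p}\Delta^{-(p-1)},
\end{equation*}
and the last term equals $C\Delta$. It then suffices to show
\begin{equation*}
\E\big[|F_n+G_n+H_n|^p\big] \leq (1+C\Delta)\,\E\big[|Y_n|^p\big] + C\Delta .
\end{equation*}

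For this, apply Lemma \ref{lem:xypCp} with $x = F_n$ and $y = G_n+H_n$, then take conditional expectation given $\F_{t_n}$. The cross term $p|F_n|^{p-2}\langle F_n, G_n+H_n\rangle$ vanishes, because $F_n$ is $\F_{t_n}$-measurable and $G_n$, $H_n$ are martingale increments with zero conditional mean.

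For $|F_n|^p$, use $|F_n| \leq |Y_n| + C\Delta(1+|Y_n|)$ from the linear growth in Assumption \ref{ass:globallinear}. Lemma \ref{lem:uvp} with $\varepsilon=\Delta$ again gives $|F_n|^p \leq (1+C\Delta)|Y_n|^p + C\Delta$. The same bound shows $|F_n|^{p-2} \leq C(1+|Y_n|^{p-2})$.

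The remaining terms are $|F_n|^{p-2}\E[|G_n+H_n|^2\mid\F_{t_n}]$ and $\E[|G_n+H_n|^p\mid\F_{t_n}]$:
\begin{itemize}
\item For the Brownian part, Gaussian moments give $\E[|G_n|^2\mid\F_{t_n}] \leq C(1+|Y_n|^2)\Delta$ and $\E[|G_n|^p\mid\F_{t_n}] \leq C(1+|Y_n|^p)\Delta^{p/2}$.
\item For the jump part, the It\^o isometry gives $\E[|H_n|^2\mid\F_{t_n}] \leq C(1+|Y_n|^2)\Delta$.
\end{itemize}
Combining, $|F_n|^{p-2}\E[|G_n+H_n|^2\mid\F_{t_n}] \leq C(1+|Y_n|^p)\Delta$ by Young's inequality.

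The main obstacle is the $p$th moment of the jump increment $H_n$. Unlike the Brownian case, it does not scale like $\Delta^{p/2}$. Here we would invoke the Kunita (or Bichteler--Jacod) inequality:
\begin{equation*}
\E\big[|H_n|^p\mid\F_{t_n}\big] \leq C\Big(\int_{|z|<c}|h(t_n,Y_n,z)|^2\,\mu(\dif z)\,\Delta\Big)^{p/2} + C\Delta\int_{|z|<c}|h(t_n,Y_n,z)|^p\,\mu(\dif z).
\end{equation*}
Assumption \ref{ass:globallinear} controls the first term and Assumption \ref{As:fghgrowthcond} controls the second, so the whole bound is at most $C(1+|Y_n|^p)\Delta$ for $\Delta<1$. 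This is precisely why the strengthened growth condition is imposed.

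Collecting all the estimates and taking full expectations yields the desired recursion, with constants independent of $n$ and $\Delta$. Iterating then gives \eqref{eq:EYnpestimate}.
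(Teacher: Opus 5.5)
Your proposal is correct and follows essentially the same route as the paper. You invert $F_{t_{n+1}}$ via \eqref{eq:Ftinvlipxxx}, apply Lemma \ref{lem:uvp} with $\varepsilon=\Delta$, and expand with Lemma \ref{lem:xypCp} so that the martingale cross term vanishes. You then control $\E(|H_n|^p\mid\F_{t_n})$ by the Kunita-type bound using Assumption \ref{As:fghgrowthcond}, and iterate the one-step recursion $\E[|Y_{n+1}|^p]\le(1+C\Delta)\E[|Y_n|^p]+C\Delta$. The only cosmetic differences are that you bound $|F_n|^p$ with Lemma \ref{lem:uvp} instead of Lemma \ref{lem:xypCp}, and you iterate $(1+C\Delta)^n$ directly instead of invoking the discrete Gronwall inequality.
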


\begin{proof}
      By \eqref{eq:Stmethod}, \eqref{eq:mapFtx}, and \eqref{eq:FGHnotation}, we have $F_{t_{n+1}}(Y_{n+1}) = R_{n}$ with $R_{n} = F_{n} + G_{n} + H_{n}$ for any $n = 0,1,2,\cdots$.
      Moreover, \eqref{eq:Ftinvlipxxx} implies that
      $$|Y_{n+1}| = |F_{t_{n+1}}^{-1}(R_{n})|
      \leq \big(1 + 2\theta{L}\Delta\big)|R_{n}| + C \Delta.$$ Applying Lemma \ref{lem:uvp} with $\varepsilon = \Delta \in (0,1)$ and the inequality $(1+y)^{p} \leq 1 + p2^{p-1}y, y \in [0,1]$ yields
      \begin{align*}
            |Y_{n+1}|^{p}
            \leq
            (1+\Delta)
            \big(1 + 2\theta{L}\Delta\big)^{p}|R_{n}|^{p} 
            + C(p)\Delta^{-(p-1)} (C \Delta)^{p}
            \leq
            (1+C\Delta)|R_{n}|^{p} + C\Delta,
      \end{align*} 
      and thus
      \begin{align}\label{eq:absYnplus1p}
            \E\big[|Y_{n+1}|^{p}\big]
            \leq
            (1+C\Delta)\E\big[|R_{n}|^{p}\big] + C\Delta.
      \end{align}          
      Together with Lemma \ref{lem:xypCp}, we obtain
      \begin{align*}
            \E\big[|R_{n}|^{p}\big]
            =
            \E\big[\E\big(|R_{n}|^{p} \mid \F_{t_{n}} \big)\big]
            \leq&~
            \E\big[\E\big(|F_{n}|^{p} + p|F_{n}|^{p-2}\langle F_{n},G_{n} + H_{n} \rangle
            \\&~+ 
            C(p)\big(|F_{n}|^{p-2}|G_{n} + H_{n}|^{2} + |G_{n} + H_{n}|^{p}\big) \mid \F_{t_{n}} \big)\big].
      \end{align*}
      As $F_{n}$ is $\F_{t_{n}}$-measurable, $\E[G_{n}] = 0$ and $\E[H_{n}] = 0$, one gets
      \begin{align*}
            \E\big(p|F_{n}|^{p-2}\big\langle F_{n},G_{n} + H_{n} \big\rangle \mid \F_{t_{n}} \big)
            =
            p|F_{n}|^{p-2}\big\langle F_{n},\E[G_{n}] 
            + \E[H_{n}] \big\rangle  
            =
            0,
      \end{align*}
      and accordingly
      \begin{align}\label{eq:estimateRnp}
            \E\big[|R_{n}|^{p}\big]
            \leq
            \E\big[|F_{n}|^{p}\big]
            +
            C(p)\E\big[|F_{n}|^{p-2}\E\big(
            |G_{n} + H_{n}|^{2} \mid \F_{t_{n}} \big)\big]
            +
            C(p)\E\big[\E\big(
            |G_{n} + H_{n}|^{p} \mid \F_{t_{n}} \big)\big].
      \end{align} 
      To estimate $\E\big[|F_{n}|^{p}\big]$, using Lemma \ref{lem:xypCp} and Assumption \ref{ass:globallinear} gives
      \begin{align*}
            &~\E\big[|F_{n}|^{p}\big] = \E\big[|Y_{n} + (1-\theta) f(t_{n},Y_{n})\Delta|^{p}\big]
            \\\leq&~
            \E\big[|Y_{n}|^{p} + p|Y_{n}|^{p-2}\langle Y_{n},(1-\theta) f(t_{n},Y_{n})\Delta \rangle
            \\&~+ 
            C(p)\big(|Y_{n}|^{p-2}|(1-\theta) f(t_{n},Y_{n})\Delta|^{2} + |(1-\theta) f(t_{n},Y_{n})\Delta|^{p}\big)\big]
            \\\leq&~
            \E\big[|Y_{n}|^{p}\big]
            +  
            C\Delta\E\big[|Y_{n}|^{p-1}|f(t_{n},Y_{n})|
            + 
            \big(|Y_{n}|^{p-2} |f(t_{n},Y_{n})|^{2} + |f(t_{n},Y_{n})|^{p}\big)\big]
            \\\leq&~
            \E\big[|Y_{n}|^{p}\big]
            +  
            C\Delta\E\big[|Y_{n}|^{p-1}(1 + |Y_{n}|)
            + 
            \big(|Y_{n}|^{p-2} (1 + |Y_{n}|^{2}) 
            + (1 + |Y_{n}|^{p})\big)\big]
            \\\leq&~
            \big(1 + C\Delta\big)\E\big[|Y_{n}|^{p}\big]
            +  
            C\Delta.
      \end{align*} 
      For $C(p)\E\big[|F_{n}|^{p-2}\E\big(|G_{n} + H_{n}|^{2} \mid \F_{t_{n}} \big)\big]$, we utilize
      \begin{align*}
            \E\big(|G_{n}|^{p} \mid \F_{t_{n}} \big) 
            = |g(t_{n},Y_{n})|^{p} \E\big[|\Delta W_{n}|^{p}]
            \leq C\Delta^{\frac{p}{2}}\big(1+|Y_{n}|^{p}\big),          
      \end{align*}
      and 
      \begin{align}\label{eq:Hnpestimate}
            \E\big(|H_{n}|^{p} \mid \F_{t_{n}} \big) \notag
            \leq&~ C(p)\bigg(
            \Big(\int_{t_{n}}^{t_{n+1}}\int_{|z|<c} 
            |h(t_n,Y_n,z)|^{2} \,\mu(\dif{z})\dif{s} \Big)^{\frac{p}{2}} \notag
            \\&~+
            \int_{t_{n}}^{t_{n+1}}\int_{|z|<c} 
            |h(t_n,Y_n,z)|^{p} \,\mu(\dif{z})\dif{s}\bigg) \notag
            \\\leq&~ C(p)\bigg(\Delta^{\frac{p}{2}} 
            \Big(\int_{|z|<c} |h(t_n,Y_n,z)|^{2} 
            \,\mu(\dif{z}) \Big)^{\frac{p}{2}}  \notag
            \\&~+
            \Delta \int_{|z|<c} 
            |h(t_n,Y_n,z)|^{p} \,\mu(\dif{z})\bigg)  \notag
            \\\leq&~ 
            C\Delta \big(1 + |Y_n|^{p}\big),     
      \end{align}  
      as well as the Young inequality to obtain
      \begin{align*}
            C(p)\E\big[|F_{n}|^{p-2}\E\big(
            |G_{n} + H_{n}|^{2} \mid \F_{t_{n}} \big)\big]
            \leq&~
            C \Delta \E\big[|F_{n}|^{p-2}(1 + |Y_n|^{2})\big]
            \\\leq&~
            C \Delta \E\bigg[\frac{p-2}{p}
            (|F_{n}|^{p-2})^{\frac{p}{p-2}}
            + 
            \frac{2}{p}(1 + |Y_n|^{2})^{\frac{p}{2}}\bigg]
            \\\leq&~
            C \Delta \big(1 + \E\big[|Y_n|^{p}\big]\big).
      \end{align*}
      Similarly for $C(p)\E\big[\E\big(|G_{n} + H_{n}|^{p}| \F_{t_{n}} \big)\big]$, one gets
      \begin{align*}
            C(p)\E\big[\E\big(|G_{n} + H_{n}|^{p}| \F_{t_{n}} \big)\big]
            \leq
            C(p)\E\big[\E\big(|G_{n}|^{p}| \F_{t_{n}} \big)
            +
            \E\big(|H_{n}|^{p}| \F_{t_{n}} \big)\big]
            \leq 
            C \Delta \big(1 + \E\big[|Y_n|^{p}\big]\big).
      \end{align*} 
      Inserting these three estimates into \eqref{eq:estimateRnp} leads to 
      $\E\big[|R_{n}|^{p}\big] \leq \big(1 + C\Delta\big)\E\big[|Y_{n}|^{p}\big] + C\Delta$,  
      which together with \eqref{eq:absYnplus1p} yields $\E\big[|Y_{n+1}|^{p}\big] \leq \big(1 + C\Delta\big)\E\big[|Y_{n}|^{p}\big] + C\Delta$, and consequently
      \begin{align*}
            \E\big[|Y_{n}|^{p}\big]
            \leq
            \big(|x_{0}|^{p} + Ct_{n}\big)
            +
            C\Delta \sum_{i=0}^{n-1} \E\big[|Y_{i}|^{p}\big].
      \end{align*}     
      By the discrete Gronwall inequality, we obtain the required result and complete the proof.
\end{proof}

Thanks to Proposition \ref{prop:EYnpestimate}, moments of $\{Y_{n}\}_{n \in \N}$ of arbitrary order are bounded. Therefore, in the estimates below, we shall use 
$q > 0$ as a generic exponent and freely written higher-order polynomial terms such as $|Y_{n}|^{q+1}$ or $|Y_{n}|^{2q}$ in the unified form $(1+|Y_{n}|^{q})$, after changing the value of $q$  if necessary.


\subsection{One-step weak expansion for the frozen Euler step}
In this subsection, we establish a one-step weak consistency expansion for the frozen Euler step   
\begin{align}\label{eq:YEMfrozen}
	  \overline{Y}_{n+1}
	  :=&~
	  Y_{n} 
	  +
	  f(t_{n},Y_{n})\Delta
      +
      g(t_{n},Y_{n})\Delta W_{n}  \notag
      \\&~+
      \int_{t_{n}}^{t_{n+1}}\int_{|z|<c}
      h(t_n,Y_n,z) \,\widetilde{N}(\dif{z},\dif{s}),
      \quad n = 0,1,2,\cdots.
\end{align}  
To this end, we introduce the backward Kolmogorov function
\begin{equation}
      u(s,x) := \E\big[\Phi(Y(t)) \mid Y(s) = x\big],
      \quad 0 \leq s \leq t,x \in \R^{d}.    
\end{equation}
Write $\partial_{s}u$, $\nabla{u}$ and $\nabla^{2}u$ for the time derivative, gradient and Hessian of $u$, respectively. Assume that all components of $f(t,\cdot), g(t,\cdot), h(t,\cdot,z)$ belong to $C_{p}^{4}(\R^{d};\R)$ for any $t \geq 0$ and $z \in \R^{d} \backslash \{0\}$. Then, by \cite[Lemma 12.3.1]{platen2010numerical}, $u$ is the unique solution to the Kolmogorov backward partial integro differential equations (KB-PIDEs)
\begin{equation}\label{eq:Kbpide}
\left\{\begin{aligned}
      &\partial_{s}u(s,x) + \mathcal{L}_{s}u(s,x) = 0,
      \quad (s,x) \in (0,t) \times \R^{d},
      \\
      &u(t,x) = \Phi(x), \quad x \in \R^{d},
\end{aligned}\right.
\end{equation}
where the generator $\mathcal{L}_{s}$ is defined by
\begin{align}\label{eq:mathcalLs}
      \mathcal{L}_{s}\psi(x)
      :=&~
      \big\langle \nabla\psi(x), f(s,x)\big\rangle
      +
      \frac{1}{2}\tr\big(g^{\top}(s,x)\nabla^{2}\psi(x)g(s,x)\big) \notag
      \\&~+
      \int_{|z|<c} \big(\psi(x+h(s,x,z)) 
      - \psi(x) - \langle \nabla\psi(x),h(s,x,z) \rangle\big) \,\mu(\dif{z})
\end{align}
for $\psi \colon \R^{d} \to \R$. Moreover, $x \mapsto u(s,x)$ belongs to $C_{p}^{4}(\R^{d};\R)$ for each $s \in [0,t]$ (see \cite[Lemma 12.3.1]{platen2010numerical}). With the backward Kolmogorov function $u$ at hand, we are able to derive the one-step weak expansion for the frozen Euler step. The following lemma provides the conditional Taylor expansion, together with the corresponding remainder estimate, for the Brownian part of the frozen Euler step.

\begin{lemma}\label{lem:BMpart}
      Assume that all components of $f(t,\cdot), g(t,\cdot), h(t,\cdot,z)$ belong to $C_{p}^{4}(\R^{d};\R)$ for any $t \geq 0$ and $z \in \R^{d} \backslash \{0\}$. 
      Suppose that Assumptions \ref{ass:globallinear}, \ref{As:fghgrowthcond} hold, and let $\theta{L}\Delta \leq \frac{1}{2}$ with $\theta \in [0,1]$. Then we have
      \begin{align*}
            &~\E\big(u\big(t_{n+1},Y_{n} +  f(t_{n},Y_{n})\Delta
            + g(t_{n},Y_{n})\Delta W_{n}\big)  \mid \F_{t_{n}}\big)
            \\=&~
            u(t_{n+1},Y_{n}) 
            + 
            \Delta\big\langle {\nabla} u(t_{n+1},Y_{n}),
            f(t_{n},Y_{n})\big\rangle
            +
            \mathcal{O}\big(\big(1 + |Y_{n}|^{q}\big)\Delta^{2}\big)
            \\&~+
            \frac{\Delta}{2}\tr\big(g(t_{n},Y_{n})^{\top}
            \nabla^{2}u(t_{n+1},Y_{n})g(t_{n},Y_{n})\big).
      \end{align*}
\end{lemma}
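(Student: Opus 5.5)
We expand $u(t_{n+1},\cdot)$ around $Y_n$ by Taylor's formula with integral remainder, conditioning on $\F_{t_n}$ so that $Y_n$, $a_n:=f(t_n,Y_n)$ and $b_n:=g(t_n,Y_n)$ are frozen. Set $\delta_n := a_n\Delta + b_n\Delta W_n$. Since $x\mapsto u(t_{n+1},x)\in C_p^4(\R^d;\R)$, we write
\begin{align*}
u(t_{n+1},Y_n+\delta_n)
=&~u(t_{n+1},Y_n) + \langle \nabla u(t_{n+1},Y_n),\delta_n\rangle
+\tfrac12 \nabla^2 u(t_{n+1},Y_n)[\delta_n,\delta_n]
\\&~+\tfrac16 \nabla^3 u(t_{n+1},Y_n)[\delta_n,\delta_n,\delta_n] + \mathcal{R}_4,
\end{align*}
where $\mathcal{R}_4=\frac16\int_0^1(1-r)^3\nabla^4u(t_{n+1},Y_n+r\delta_n)[\delta_n]^4\diff{r}$. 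We then take conditional expectations term by term. Here $\Delta W_n$ is independent of $\F_{t_n}$ and has mean zero, $\E[\Delta W_n\Delta W_n^\top]=\Delta I_m$, and all its odd moments vanish.

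The linear term gives exactly $\Delta\langle\nabla u(t_{n+1},Y_n),a_n\rangle$. In the quadratic term, the cross term $a_n\Delta\otimes b_n\Delta W_n$ has zero conditional mean. The pure noise part gives $\frac{\Delta}{2}\tr(b_n^\top\nabla^2u(t_{n+1},Y_n)b_n)$. The drift part $\frac{\Delta^2}{2}\nabla^2u[a_n,a_n]$ is bounded by $C(1+|Y_n|^q)\Delta^2$, using the polynomial growth of $\nabla^2u$ and the linear growth $|a_n|\le C(1+|Y_n|)$ from Assumption \ref{ass:globallinear}.

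In the cubic term, we expand $[\delta_n]^3$ into monomials. Those with an odd number of $\Delta W_n$ factors vanish in conditional expectation. The rest are of the form $\Delta^3[a_n]^3$ or $\Delta\cdot(b_n\Delta W_n)^{\otimes 2}\otimes a_n$ (times $\Delta$), with expectations $\mathcal{O}(\Delta^3)$ and $\mathcal{O}(\Delta^2)$ respectively. Each is multiplied by polynomially growing factors in $Y_n$, so this is where the third-order expansion together with odd-moment cancellation produces $\Delta^2$ rather than $\Delta^{3/2}$.

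The main obstacle is the remainder $\mathcal{R}_4$, because its argument $Y_n+r\delta_n$ is random. We use $|\nabla^4u(t_{n+1},y)|\le C(1+|y|^{q})$ with $C$ uniform in the time variable, which should follow from \cite[Lemma 12.3.1]{platen2010numerical} on the finite interval $[0,t]$. This gives $|\mathcal{R}_4|\le C(1+|Y_n|^q+|\delta_n|^q)|\delta_n|^4$. By the Cauchy–Schwarz inequality and Gaussian moments, $\E(|\delta_n|^{k}\mid\F_{t_n})\le C(1+|Y_n|^{k})\Delta^{k/2}$ for $\Delta<1$, so $\E(|\mathcal{R}_4|\mid\F_{t_n})\le C(1+|Y_n|^{q'})\Delta^2$. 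Collecting the terms yields the claimed expansion. The constraint $\theta L\Delta\le\frac12$ is not actually used here beyond $\Delta\in(0,1)$, and Assumption \ref{As:fghgrowthcond} enters only through the moment framework of Proposition \ref{prop:EYnpestimate}.
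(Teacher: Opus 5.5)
Your proposal is correct and follows the paper's proof essentially step by step. Both arguments use a third-order Taylor expansion of $u(t_{n+1},\cdot)$ about $Y_n$ with fourth-order integral remainder, let the vanishing of odd Gaussian moments remove the linear-noise, cross and cubic-noise terms, and control the remainder through $\E(|\delta_n|^p\mid\F_{t_n})\le C(1+|Y_n|^q)\Delta^{p/2}$ (your $\delta_n$ is the paper's $\chi_n$); your explicit remark that the polynomial-growth constant of $\nabla^4u(t_{n+1},\cdot)$ must be uniform in $n$ is something the paper uses only tacitly.
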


\begin{proof}
      Letting $\chi_{n} := f(t_{n},Y_{n})\Delta + g(t_{n},Y_{n})\Delta W_{n}$ and
      $\varphi_{n}(x) := u(t_{n+1},x)$ for $x \in \R^{d}$, the Taylor formula shows that
      \begin{align*}
            &~\varphi_{n}(Y_{n} +  f(t_{n},Y_{n})\Delta + g(t_{n},Y_{n})\Delta W_{n})
            \\=&~
            \varphi_{n}(Y_{n}) + \big\langle \nabla\varphi_{n}(Y_{n}),\chi_{n} \big\rangle
            +
            \frac{1}{2}\chi_{n}^{\top} \nabla^{2}\varphi_{n}(Y_{n}) \chi_{n}
            +
            \frac{1}{6}\nabla^{3}\varphi_{n}(Y_{n})[\chi_{n},\chi_{n},\chi_{n}]
            +
            \hat{R}_{n+1}
      \end{align*}  
      with $\hat{R}_{n+1} := \int_{0}^{1} \frac{(1-r)^{3}}{6}
            \nabla^{4}\varphi_{n}(Y_{n} + r\chi_{n})
            [\chi_{n},\chi_{n},\chi_{n},\chi_{n}]\diff{r}$.
      It follows from $\E(\Delta W_n\mid\F_{t_n}) = 0$ that
      \begin{align}\label{eq:EvarnchiFtn}
            &~\E\big(\varphi_{n}(Y_{n} + f(t_{n},Y_{n})\Delta 
            + g(t_{n},Y_{n})\Delta W_{n}) \mid \F_{t_{n}}\big) \notag
            \\=&~
            \varphi_{n}(Y_{n})
            + 
            \Delta\big\langle \nabla\varphi_{n}(Y_{n}),
            f(t_{n},Y_{n})\big\rangle
            +
            \frac{1}{2}\E\big(\chi_{n}^{\top} \nabla^{2}\varphi_{n}(Y_{n}) 
            \chi_{n} \mid \F_{t_{n}}\big) \notag
            \\&~+
            \frac{1}{6}\E\big(\nabla^{3}\varphi_{n}(Y_{n})
            [\chi_{n},\chi_{n},\chi_{n}] \mid \F_{t_{n}}\big)
            +
            \E\big(\hat{R}_{n+1} \mid \F_{t_{n}}\big).
      \end{align}
      To estimate $\frac{1}{2}\E\big(\chi_{n}^{\top} \nabla^{2}\varphi_{n}(Y_{n}) 
      \chi_{n} \mid \F_{t_{n}}\big)$, the following basic equality 
      \begin{align*}
            \chi_{n}^{\top} \nabla^{2}\varphi_{n}(Y_{n}) \chi_{n}
            =&~
            (f(t_{n},Y_{n})\Delta)^{\top} \nabla^{2}\varphi_{n}(Y_{n}) (f(t_{n},Y_{n})\Delta)
            \\&~+
            2(f(t_{n},Y_{n})\Delta)^{\top} \nabla^{2}\varphi_{n}(Y_{n}) (g(t_{n},Y_{n})\Delta W_{n})
            \\&~+
            (g(t_{n},Y_{n})\Delta W_{n})^{\top} \nabla^{2}\varphi_{n}(Y_{n}) (g(t_{n},Y_{n})\Delta W_{n})
      \end{align*}
      and $\E(\Delta W_n\mid\F_{t_n}) = 0$ give that
      \begin{align*}
            \frac{1}{2}\E\big(\chi_{n}^{\top} &~\nabla^{2}\varphi_{n}(Y_{n}) 
            \chi_{n} \mid \F_{t_{n}}\big)
            =
            \frac{\Delta^{2}}{2} f(t_{n},Y_{n})^{\top} 
            \nabla^{2}\varphi_{n}(Y_{n}) f(t_{n},Y_{n}) 
            \\&~+
            \frac{1}{2}\E\big((\Delta W_{n})^{\top} 
            \big(g(t_{n},Y_{n})^{\top} 
            \nabla^{2}\varphi_{n}(Y_{n}) g(t_{n},Y_{n}) \big)
            (\Delta W_{n}) \mid \F_{t_{n}}\big).
      \end{align*}
      Noting that $\mathbf{u}^{\top}\mathbf{M}\mathbf{u} = \tr(\mathbf{M}\mathbf{u}\mathbf{u}^{\top})$ for any matrix $\mathbf{M} \in \R^{m \times m}$ and random vector $\mathbf{u} \in \R^{m}$, we obtain
      \begin{align*}
            &~\E\big((\Delta W_{n})^{\top} \big(g(t_{n},Y_{n})^{\top} 
            \nabla^{2}\varphi_{n}(Y_{n}) g(t_{n},Y_{n}) \big)
            (\Delta W_{n}) \mid \F_{t_{n}}\big)
            \\=&~
            \E\big(\tr\big(
            \big(g(t_{n},Y_{n})^{\top} 
            \nabla^{2}\varphi_{n}(Y_{n}) g(t_{n},Y_{n}) \big)
            (\Delta W_{n})(\Delta W_{n})^{\top} \big)  \mid \F_{t_{n}}\big)
            \\=&~
            \tr\big(g(t_{n},Y_{n})^{\top} 
            \nabla^{2}\varphi_{n}(Y_{n}) g(t_{n},Y_{n}) 
            \,\E\big((\Delta W_{n})(\Delta W_{n})^{\top} 
            \mid \F_{t_{n}}\big)\big) \big) 
            \\=&~
            \Delta \tr\big(g(t_{n},Y_{n})^{\top} 
            \nabla^{2}\varphi_{n}(Y_{n}) g(t_{n},Y_{n}) \big),  
      \end{align*}
      and accordingly  
      \begin{align}\label{eq:onehalfEchin}
            \frac{1}{2}\E\big(\chi_{n}^{\top} \nabla^{2}\varphi_{n}(Y_{n}) 
            \chi_{n} \mid \F_{t_{n}}\big)
            =&~
            \frac{\Delta^{2}}{2} f(t_{n},Y_{n})^{\top} 
            \nabla^{2}\varphi_{n}(Y_{n}) f(t_{n},Y_{n}) \notag
            \\&~+
            \frac{\Delta}{2} \tr\big(g(t_{n},Y_{n})^{\top} 
            \nabla^{2}\varphi_{n}(Y_{n}) g(t_{n},Y_{n}) \big)
      \end{align}    
      with
      \begin{align}\label{eq:onehalfEchinest}
            \bigg|\frac{\Delta^{2}}{2} f(t_{n},Y_{n})^{\top} 
            \nabla^{2}\varphi_{n}(Y_{n}) f(t_{n},Y_{n}) \bigg|
            \leq 
            C\big(1 + |Y_{n}|^{q}\big)\Delta^{2}
      \end{align}
      because of the linear growth of $f$ and the polynomial growth of $\nabla^{2}\varphi_{n}$.
      For $\frac{1}{6}\E\big(\nabla^{3}\varphi_{n}(Y_{n}) [\chi_{n},\chi_{n},\chi_{n}] \mid \F_{t_{n}}\big)$, we apply 
      \begin{align*}
            [\chi_{n},\chi_{n},\chi_{n}]
            =&~
            (f(t_{n},Y_{n})\Delta)^{\otimes3}
            +
            3(f(t_{n},Y_{n})\Delta)^{\otimes2} \otimes (g(t_{n},Y_{n})\Delta W_{n})
            \\&~+
            3(f(t_{n},Y_{n})\Delta) \otimes (g(t_{n},Y_{n})\Delta W_{n})^{\otimes2}
            +
            (g(t_{n},Y_{n})\Delta W_{n})^{\otimes3}, 
      \end{align*}
      and $\E(\Delta W_n \mid\F_{t_n}) = 0$, $\E((\Delta W_n)^{\otimes3} \mid\F_{t_n}) = 0$ to derive
      \begin{align*}
            \frac{1}{6}\E\big(\nabla^{3}\varphi_{n}(Y_{n})
            &~[\chi_{n},\chi_{n},\chi_{n}] \mid \F_{t_{n}}\big) \notag
            =
            \frac{\Delta^{3}}{6}\nabla^{3}\varphi_{n}(Y_{n})
            [f(t_{n},Y_{n}),f(t_{n},Y_{n}),f(t_{n},Y_{n})] \notag
            \\&~+
            \frac{\Delta}{2}\E\big(\nabla^{3}\varphi_{n}(Y_{n})
            [f(t_{n},Y_{n}),g(t_{n},Y_{n})\Delta W_{n},
            g(t_{n},Y_{n})\Delta W_{n}] \mid \F_{t_{n}}\big),           
      \end{align*}
      which together with the linear growth of $f$ and the polynomial growth of $\nabla^{3}\varphi_{n}$ indicates
      \begin{align}\label{eq:oensixnabla3varnYn}
            \bigg|\frac{1}{6}\E\big(\nabla^{3}\varphi_{n}(Y_{n})
            [\chi_{n},\chi_{n},\chi_{n}] \mid \F_{t_{n}}\big) \bigg|
            \leq 
            C\big(1 + |Y_{n}|^{q}\big)\Delta^{2}.                       
      \end{align}      
      Concerning $\E\big(\hat{R}_{n+1} \mid \F_{t_{n}}\big)$, the linear growth of $f,g$ ensures that for any $p \geq 2$, 
      \begin{align}\label{eq:chinpestimate}
            \E\big(|\chi_{n}|^{p} \mid \F_{t_{n}}\big)
            =
            \E\big(|f(t_{n},Y_{n})\Delta + g(t_{n},Y_{n})\Delta W_{n}|^{p} \mid \F_{t_{n}}\big)
            \leq
            C\big(1 + |Y_{n}|^{q}\big)\Delta^{\frac{p}{2}},
      \end{align}
      which along with the polynomial growth of $\nabla^{4}\varphi_{n}$ results in
      \begin{align}\label{eq:EhatRnplus1}
            \big|\E\big(\hat{R}_{n+1} \mid \F_{t_{n}}\big)\big|
            =&~
            \bigg|\E\bigg(\int_{0}^{1} \frac{(1-r)^{3}}{6}
            \nabla^{4}\varphi_{n}(Y_{n} + r\chi_{n})
            [\chi_{n},\chi_{n},\chi_{n},\chi_{n}]\diff{r} 
            \mid \F_{t_{n}}\bigg)\bigg| \notag
            \\\leq&~
            C\E\big( (1 + |Y_{n}|^{q} + |\chi_{n}|^{q})
            |\chi_{n}|^{4} \mid \F_{t_{n}}\big) \notag
            \\\leq&~
            C\big(1 + |Y_{n}|^{q}\big)\Delta^{2}.
      \end{align}
      Putting \eqref{eq:onehalfEchin}, \eqref{eq:onehalfEchinest}, 
      \eqref{eq:oensixnabla3varnYn} and \eqref{eq:EhatRnplus1} into \eqref{eq:EvarnchiFtn} gives the desired result and 
      ends the proof.
\end{proof}

Similarly, the next lemma assesses the conditional expectation of the weak difference associated with the jump component with respect to the Poisson random measure.
\begin{lemma}\label{lem:Jumppart}
      Assume that all components of $f(t,\cdot), g(t,\cdot), h(t,\cdot,z)$ belong to $C_{p}^{4}(\R^{d};\R)$ for any $t \geq 0$ and $z \in \R^{d} \backslash \{0\}$. Suppose that Assumptions \ref{ass:globallinear}, \ref{As:fghgrowthcond} hold, and let $\theta{L}\Delta \leq \frac{1}{2}$ with $\theta \in [0,1]$. Then we have
      \begin{align}\label{eq:Jumppart}
            &~\E\big(u(t_{n+1},Y_{n} + H_{n}) - u(t_{n+1},Y_{n})
            - \big\langle \nabla u(t_{n+1},Y_{n}),
            H_{n}\big\rangle \mid \F_{t_{n}}\big) \notag
            \\=&~
            \Delta\int_{|z| < c} u(t_{n+1},Y_{n} + h(t_{n},Y_{n},z))
            - u(t_{n+1},Y_{n})  \notag
            \\&~- \big\langle \nabla u(t_{n+1},Y_{n}), 
            h(t_{n},Y_{n},z)\big\rangle \,\mu(\dif{z})
            +
            \mathcal{O}\big(\big(1 + |Y_{n}|^{q}\big)\Delta^{2}\big).
      \end{align}
\end{lemma}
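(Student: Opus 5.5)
Conditionally on $\F_{t_n}$, the state $Y_n=y$ is frozen, and $H_n$ is a compensated compound Poisson sum over $(t_n,t_{n+1}]$. This is because $\lambda:=\mu(\{|z|<c\})<+\infty$ by the finite activity assumption, so $\int_{|z|<c}|h(t_n,y,z)|\,\mu(\dif z)<\infty$ by Cauchy--Schwarz and Assumption \ref{ass:globallinear}. Writing $\bar h(y):=\int_{|z|<c}h(t_n,y,z)\,\mu(\dif z)$, we have
\begin{equation*}
H_n=\sum_{k=1}^{N_n} h(t_n,y,Z_k)-\Delta\,\bar h(y),
\end{equation*}
where $N_n\sim\mathrm{Poisson}(\lambda\Delta)$ and $Z_k$ are i.i.d.\ with law $\mu(\cdot\cap\{|z|<c\})/\lambda$, independent of $\F_{t_n}$. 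Set $\varphi_n(x):=u(t_{n+1},x)\in C_p^4$ and $\Psi(x):=\varphi_n(x)-\varphi_n(y)-\langle\nabla\varphi_n(y),x-y\rangle$. The left-hand side of \eqref{eq:Jumppart} is then $\E(\Psi(y+H_n)\mid\F_{t_n})$. I will split this expectation according to the events $\{N_n=0\}$, $\{N_n=1\}$ and $\{N_n\ge2\}$.

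On $\{N_n=0\}$ we have $H_n=-\Delta\bar h(y)$. Since $\Psi(y)=0$ and $\nabla\Psi(y)=0$, Taylor's formula gives $|\Psi(y-\Delta\bar h)|\le C(1+|y|^q)\Delta^2|\bar h|^2\le C(1+|y|^q)\Delta^2$. The factor $\P(N_n=0)\le1$ keeps this contribution at $\mathcal{O}((1+|y|^q)\Delta^2)$.

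On $\{N_n=1\}$, which has probability $\lambda\Delta e^{-\lambda\Delta}$, we have $H_n=h(t_n,y,Z_1)-\Delta\bar h$. By the mean value theorem,
\begin{equation*}
\big|\Psi(y+h-\Delta\bar h)-\Psi(y+h)\big|\le C\Delta\,(1+|y|^q+|h|^q)\,|\bar h|.
\end{equation*}
After averaging over $Z_1$ and using Assumption \ref{As:fghgrowthcond}, this difference is $\mathcal{O}((1+|y|^q)\Delta)$, and multiplying by the probability $\lambda\Delta$ makes it $\mathcal{O}(\Delta^2)$. The main part is
\begin{equation*}
\lambda\Delta e^{-\lambda\Delta}\,\E\,\Psi(y+h(t_n,y,Z_1))=\Delta e^{-\lambda\Delta}\int_{|z|<c}\Psi(y+h(t_n,y,z))\,\mu(\dif z).
\end{equation*}
Replacing $e^{-\lambda\Delta}$ by $1$ costs $\lambda\Delta$ times $\Delta\int|\Psi(y+h)|\,\mu(\dif z)$. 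This is $\mathcal{O}((1+|y|^q)\Delta^2)$, since $|\Psi(y+h)|\le C(1+|y|^q+|h|^q)|h|^2$ and the $\mu$-integrals of $|h|^p$ have polynomial growth. The result is exactly the integral term in \eqref{eq:Jumppart}.

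On $\{N_n\ge2\}$, I use Cauchy--Schwarz:
\begin{equation*}
\big|\E(\Psi(y+H_n)\mathbf 1_{\{N_n\ge2\}})\big|\le \big(\E|\Psi(y+H_n)|^2\big)^{1/2}\,\P(N_n\ge2)^{1/2}.
\end{equation*}
Here $\P(N_n\ge2)\le(\lambda\Delta)^2$, so its square root gives one factor $\Delta$. The other factor must come from the polynomial-growth bound $|\Psi(y+H_n)|\le C(1+|y|^q+|H_n|^q)|H_n|^2$ together with \eqref{eq:Hnpestimate}. Those estimates only give $\E|H_n|^4\lesssim\Delta$, so this bound yields $\Delta^{1/2}$, not $\Delta$, and Cauchy--Schwarz alone is insufficient. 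This is the main obstacle. I will instead compute directly on $\{N_n=k\}$: there $|H_n|\le \sum_{i\le k}|h(t_n,y,Z_i)|+\Delta|\bar h|$, so after averaging over the i.i.d.\ $Z_i$,
\begin{equation*}
\E\big(|\Psi(y+H_n)|\,\big|\,N_n=k\big)\le C k^{q'}(1+|y|^{q'})
\end{equation*}
with $q'$ depending only on the growth exponent. Then $\sum_{k\ge2}k^{q'}\P(N_n=k)\le C\Delta^2$, because $\sum_{k\ge2}k^{q'}(\lambda\Delta)^k/k!\le C(\lambda\Delta)^2$ for $\Delta<1$. Combining the three cases gives \eqref{eq:Jumppart}.
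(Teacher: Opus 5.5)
Your proposal is correct and follows essentially the same route as the paper: your $\Psi$ is the paper's $J(\cdot)-J(Y_n)$, and you split according to whether $K_n=0$, $K_n=1$ or $K_n\ge 2$. For $k=0$ you use Taylor's formula with $\nabla\Psi(Y_n)=0$; for $k=1$ a mean-value perturbation plus $\lambda\Delta\,\E\Psi(y+h(t_n,y,Z_1))=\Delta\int_{|z|<c}\Psi(y+h(t_n,y,z))\,\mu(\dif z)$; for $k\ge2$ the bound $\E|S_k|^q\le Ck^q(1+|Y_n|^q)$ summed against the Poisson tail. You were also right to abandon the Cauchy--Schwarz bound on $\{N_n\ge2\}$, which only gives $\mathcal{O}(\Delta^{3/2})$; the direct conditioning on $\{N_n=k\}$ that you adopt instead is exactly the paper's argument.
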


\begin{proof}
      For any $x,y \in \R^{d}$, we define $\varphi_{n}(x) := u(t_{n+1},x)$ 
      and $J(y) := \varphi_{n}(y) - \big\langle \nabla\varphi_{n}(x),y \big\rangle$.
      Since $\varphi_{n} \in C_{p}^{4}(\R^{d};\R)$, there exist $C,q > 0$ such that
      \begin{align*}
            |J(y)| \leq C(1+|x|^{q})(1+|y|^{q}),
            \quad x,y \in \R^{d}. 
      \end{align*}
      In the sequel, we apply this with $x = Y_{n}$, which is $\F_{t_n}$-measurable. In view of
      \begin{gather*}
            J(Y_{n}+H_{n}) - J(Y_{n})
            =
            \varphi_{n}(Y_{n} + H_{n}) - \varphi_{n}(Y_{n})
            - \big\langle \nabla\varphi_{n}(Y_{n}), H_{n}\big\rangle, 
            \\
            J(Y_{n}+h(t_{n},Y_{n},z)) - J(Y_{n})
            =
            \varphi_{n}(Y_{n} + h(t_{n},Y_{n},z))
            - \varphi_{n}(Y_{n})  \notag
            - \big\langle \nabla\varphi_{n}(Y_{n}), 
            h(t_{n},Y_{n},z)\big\rangle,
      \end{gather*}
      proving \eqref{eq:Jumppart} is thus equivalent to proving
      \begin{align}\label{eq:Jumppartequiv}
            &~\E\big(J(Y_{n}+H_{n}) - J(Y_{n}) \mid \F_{t_{n}}\big) \notag
            \\=&~
            \Delta\int_{|z| < c} J(Y_{n}+h(t_{n},Y_{n},z)) - J(Y_{n}) \,\mu(\dif{z})
            +
            \mathcal{O}\big(\big(1 + |Y_{n}|^{q}\big)\Delta^{2}\big).
      \end{align}
      In fact, $\lambda := \mu(\{|z| < c\}) < +\infty$ allows us to define $\overline{\mu}(\dif{z})
      := \frac{1}{\lambda}\mathbf{1}_{\{|z| < c\}}\mu(\dif{z})$ and the Poisson random variable 
      $K_{n} := N(\{|z| < c\} \times (t_{n},t_{n+1}])$ with parameter $\lambda\Delta > 0$. By \cite[Chapter 9]{privault2018understanding}, $K_{n}$ is independent of $\F_{t_n}$ and satisfies
      \begin{align*}
            &~\P(K_{n} = 0) = e^{-\lambda\Delta} = 1-\lambda\Delta + \mathcal{O}(\Delta^{2}),
            \\&~
            \P(K_{n} = 1) = \lambda\Delta e^{-\lambda\Delta} = \lambda\Delta + \mathcal{O}(\Delta^{2}),
            \\&~
            \P(K_{n} \geq 2) = 1 - \P(K_{n} = 0) - \P(K_{n} = 1) = \mathcal{O}(\Delta^{2}).
      \end{align*}
      Besides, given $K_{n} = k \geq 1$, the corresponding jump sizes are denoted by $z_{n,1},z_{n,2},\cdots,z_{n,k}$, which are independent of $\F_{t_n}$, and are independent and identically distributed random variables with the common distribution $\overline{\mu}$ (see, e.g., \cite[Sections 8 and 9 in Chapter 1]{ikeda1989stochastic}). Applying $\widetilde{N}(\dif{z},\dif{s}) := N(\dif{z},\dif{s}) - \mu(\dif{z})\dif{s}$ yields
      \begin{align*}
            H_{n} 
            = 
            \sum_{i=1}^{K_{n}} h(t_n,Y_n,z_{n,i})
            -
            \varrho_{n}\Delta 
      \end{align*}
      with $\varrho_{n} := \int_{|z|<c} h(t_n,Y_n,z) \,\mu(\dif{z})$. Moreover, 
      setting $S_{0} := 0$ and $S_{k} := \sum_{i=1}^{k} h(t_n,Y_n,z_{n,i})$ enables us to rewrite
      $Y_{n}+H_{n} = Y_{n} + S_{K_{n}} - \varrho_{n}\Delta$. From the law of total expectation, it follows that
      \begin{align}\label{eq:Jumppartequivtotal}
            &~\E\big(J(Y_{n}+H_{n}) - J(Y_{n}) \mid \F_{t_{n}}\big) \notag
            \\=&~
            \sum_{k=0}^{\infty}\P(K_{n} = k) \notag
            \E\big(J(Y_{n} + S_{K_{n}} - \varrho_{n}\Delta) - J(Y_{n}) \mid \F_{t_{n}}, K_{n} = k\big)
            \\=&~
            \sum_{k=0}^{\infty}\P(K_{n} = k)
            \E\big(J(Y_{n} + S_{k} - \varrho_{n}\Delta) - J(Y_{n}) \mid \F_{t_{n}}\big).
      \end{align}
      To proceed, we will split the discussion of \eqref{eq:Jumppartequivtotal} into three cases: 
      $k = 0$, $k = 1$, and $k \geq 2$. 
      
      \textbf{Case 1: $k = 0$.} Using $S_{0} = 0$, the Taylor formula, and $\nabla J(x) = 0,x \in \R^{d}$ shows that
      \begin{align*}
            &~\E\big(J(Y_{n} + S_{0} - \varrho_{n}\Delta) - J(Y_{n}) \mid \F_{t_{n}}\big)
            =
            \E\big(J(Y_{n} - \varrho_{n}\Delta) - J(Y_{n}) \mid \F_{t_{n}}\big)
            \\=&~
            \E\bigg( \big\langle \nabla J(Y_{n}), -\varrho_{n}\Delta \big\rangle 
            +
            \frac{1}{2}(-\varrho_{n}\Delta)^{\top} \nabla^{2}J(Y_{n})
            (-\varrho_{n}\Delta)
            \\&~+
            \int_{0}^{1}\frac{(1-r)^{2}}{2}  
            \nabla^{3}J(Y_{n} - r\varrho_{n}\Delta)
            [-\varrho_{n}\Delta,-\varrho_{n}\Delta,-\varrho_{n}\Delta]
            \diff{r} \,\Big|\, \F_{t_{n}}\bigg)
            \\=&~
            \frac{\Delta^{2}}{2} \varrho_{n}^{\top} 
            \nabla^{2}J(Y_{n}) \varrho_{n}
            +
            \int_{0}^{1}\frac{(1-r)^{2}}{2}  
            \nabla^{3}J(Y_{n} - r\varrho_{n}\Delta)
            [-\varrho_{n}\Delta,-\varrho_{n}\Delta,-\varrho_{n}\Delta] \diff{r}.
      \end{align*}
      Then the linear growth of $h$ and the polynomial growth 
      of both $\nabla^{2}J$ and $\nabla^{3}J$ further promise
      \begin{align}\label{eq:estimate111}
            \bigg|\frac{\Delta^{2}}{2} \varrho_{n}^{\top} 
            \nabla^{2}J(Y_{n}) \varrho_{n}\bigg|
            \leq
            \frac{\Delta^{2}}{2} \big|\varrho_{n}\big|^{2} 
            \big|\nabla^{2}J(Y_{n})\big|
            \leq
            C\big(1 + |Y_{n}|^{q}\big)\Delta^{2},            
      \end{align}
      and
      \begin{align}\label{eq:estimate222}
            &~\bigg|\int_{0}^{1}\frac{(1-r)^{2}}{2}  \notag
            \nabla^{3}J(Y_{n} - r\varrho_{n}\Delta)
            [-\varrho_{n}\Delta,-\varrho_{n}\Delta,-\varrho_{n}\Delta]\diff{r}\bigg|
            \\\leq&~
            C\big(1 + |Y_{n}|^{q} + |\varrho_{n}\Delta|^{q}\big) \notag
            |\varrho_{n}\Delta|^{3}  
            \\\leq&~
            C\big(1 + |Y_{n}|^{q}\big)\Delta^{2},          
      \end{align}
      which together with $\P(K_{n} = 0) = 1-\lambda\Delta + \mathcal{O}(\Delta^{2}) = 1 + \mathcal{O}(\Delta)$ indicates
      \begin{align}\label{eq:kcaseone}
            &~\P(K_{n} = 0)\E\big(J(Y_{n} + S_{0} 
            - \varrho_{n}\Delta) - J(Y_{n}) \mid \F_{t_{n}}\big)
            =
            \mathcal{O}\big(\big(1 + |Y_{n}|^{q}\big)\Delta^{2}\big).
      \end{align}
      
      \textbf{Case 2: $k = 1$.} By $S_{1} = h(t_n,Y_n,z_{n,1})$, we have
      \begin{align}\label{eq:kcasetwo0}
            &~\E\big(J(Y_{n} + S_{1} - \varrho_{n}\Delta) 
            - J(Y_{n}) \mid \F_{t_{n}}\big) \notag
            \\=&~
            \E\big(J(Y_{n} + h(t_n,Y_n,z_{n,1}) - \varrho_{n}\Delta) 
            - J(Y_{n} + h(t_n,Y_n,z_{n,1})) \mid \F_{t_{n}}\big) \notag
            \\&~+
            \E\big(J(Y_{n} + h(t_n,Y_n,z_{n,1})) - J(Y_{n}) \mid \F_{t_{n}}\big).
      \end{align}
      On the one hand, the Taylor formula and the arguments used in \eqref{eq:estimate111} and \eqref{eq:estimate222}
      give
      \begin{align*}
            &~\E\big(J(Y_{n} + h(t_n,Y_n,z_{n,1}) - \varrho_{n}\Delta)
            - J(Y_{n} + h(t_n,Y_n,z_{n,1})) \mid \F_{t_{n}}\big)
            \\=&~
            \E\bigg(\big\langle \nabla J(Y_{n} + h(t_n,Y_n,z_{n,1})), 
            - \varrho_{n}\Delta\big\rangle
            +
            \frac{1}{2}(-\varrho_{n}\Delta)^{\top}
            \nabla^{2}J(Y_{n} + h(t_n,Y_n,z_{n,1}))
            (-\varrho_{n}\Delta)
            \\&~+
            \int_{0}^{1}\frac{(1-r)^{2}}{2} 
            \nabla^{3}J(Y_{n} + h(t_n,Y_n,z_{n,1})
            -r\varrho_{n}\Delta)
            [-r\varrho_{n},-r\varrho_{n},-r\varrho_{n}]
            \diff{r} \,\Big|\, \F_{t_{n}}\bigg)
            \\=&~
            \Delta\E\big(\big\langle \nabla J(Y_{n} + h(t_n,Y_n,z_{n,1})), 
            - \varrho_{n}\big\rangle \mid \F_{t_{n}}\big)
            +
            \mathcal{O}\big(\big(1 + |Y_{n}|^{q}\big)\Delta^{2}\big).
      \end{align*}
      Noting that $\P(K_{n} = 1) = \lambda\Delta + \mathcal{O}(\Delta^{2}) = \mathcal{O}(\Delta)$, 
      one can use the linear growth of $h$ and the polynomial growth of $\nabla{J}$ to deduce 
      \begin{align*}
            \big|\P(K_{n} = 1)\Delta\E\big(\big\langle \nabla J(Y_{n} + h(t_n,Y_n,z_{n,1})), 
            - \varrho_{n}\big\rangle \mid \F_{t_{n}}\big)\big|
            \leq
            C\big(1 + |Y_{n}|^{q}\big)\Delta^{2},
      \end{align*}
      and accordingly
      \begin{align}\label{eq:kcasetwo1}
            &~\P(K_{n} = 1)
            \E\big(J(Y_{n} + h(t_n,Y_n,z_{n,1}) - \varrho_{n}\Delta)
            - J(Y_{n} + h(t_n,Y_n,z_{n,1})) \mid \F_{t_{n}}\big) \notag
            \\=&~
            \mathcal{O}\big(\big(1 + |Y_{n}|^{q}\big)\Delta^{2}\big).
      \end{align}
      On the other hand, utilizing the property of conditional expectation (see, e.g., \cite[Lemma 9.2, Chapter 2]{mao2008stochastic}) and the fact that $z_{n,1}$ is independent of $\F_{t_{n}}$ and admits the distribution $\overline{\mu}(\dif{z}) := \frac{1}{\lambda}\mathbf{1}_{\{|z| < c\}}\mu(\dif{z})$ leads to
      \begin{align*}
            &~\E\big(J(Y_{n} + h(t_n,Y_n,z_{n,1})) 
            - J(Y_{n}) \mid \F_{t_{n}}\big)
            \\=&~
            \E\big[J(x + h(t_n,x,z_{n,1})) - J(x)\big]|_{x = Y_{n}}
            \\=&~
            \int_{|z| < c} \big(J(Y_{n} + h(t_n,Y_{n},z)) - J(Y_{n})\big) 
            \,\overline{\mu}(\dif{z}) 
            \\=&~
            \frac{1}{\lambda}\int_{|z| < c} 
            \big(J(Y_{n} + h(t_n,Y_{n},z)) - J(Y_{n})\big) \,\mu(\dif{z}).         
      \end{align*}
      By $\P(K_{n} = 1) = \lambda\Delta + \mathcal{O}(\Delta^{2})$,
      the linear growth of $h$ and the polynomial growth of $J$, one obtains
      \begin{align*}
            &~\bigg|C\frac{\Delta^{2}}{\lambda}\int_{|z| < c} 
            J(Y_{n} + h(t_n,Y_{n},z)) - J(Y_{n}) \,\mu(\dif{z})\bigg|
            \\\leq&~
            C\Delta^{2}\int_{|z| < c} 
            \big(1 + |Y_{n}|^{q} + |h(t_n,Y_{n},z)|^{q}\big) 
            + \big(1 + |Y_{n}|^{q}\big) \,\mu(\dif{z})
            \\\leq&~
            C\Delta^{2}\big(1 + |Y_{n}|^{q}\big),
      \end{align*}
      and thus
      \begin{align}\label{eq:kcasetwo2}
            &~\P(K_{n} = 1)\E\big(J(Y_{n} + h(t_n,Y_n,z_{n,1})) 
            - J(Y_{n}) \mid \F_{t_{n}}\big) \notag
            \\=&~
            \Delta\int_{|z| < c} 
            \big(J(Y_{n} + h(t_n,Y_{n},z)) - J(Y_{n})\big) \,\mu(\dif{z})
            +
            \mathcal{O}\big(\big(1 + |Y_{n}|^{q}\big)\Delta^{2}\big).        
      \end{align}
      Combining \eqref{eq:kcasetwo0}, \eqref{eq:kcasetwo1} and \eqref{eq:kcasetwo2} implies that
      \begin{align}\label{eq:kcasetwo}
            &~\P(K_{n} = 1)\E\big(J(Y_{n} + S_{1} - \varrho_{n}\Delta) 
            - J(Y_{n}) \mid \F_{t_{n}}\big) \notag
            \\=&~
            \Delta\int_{|z| < c} 
            \big(J(Y_{n} + h(t_n,Y_{n},z)) - J(Y_{n})\big) \,\mu(\dif{z})
            +
            \mathcal{O}\big(\big(1 + |Y_{n}|^{q}\big)\Delta^{2}\big).
      \end{align}
      
      \textbf{Case 3: $k \geq 2$.} Recall that the random variables $z_{n,1},z_{n,2},\cdots,z_{n,k}$ are independent of $\F_{t_n}$, and are independent and identically distributed with the common distribution $\overline{\mu}(\dif{z}) := \frac{1}{\lambda}\mathbf{1}_{\{|z| < c\}}\mu(\dif{z})$. Applying  Assumption \ref{As:fghgrowthcond} and $S_{k} := \sum_{i=1}^{k} h(t_n,Y_n,z_{n,i})$ yields
      \begin{align*}
            \E\big(|S_{k}|^{q} \mid \F_{t_{n}} \big)
            \leq&~
            k^{q-1}\sum_{i=1}^{k}
            \E\big(|h(t_n,Y_n,z_{n,i})|^{q} \mid \F_{t_{n}} \big)
            =
            k^{q}\E\big(| h(t_n,Y_n,z_{n,1})|^{q} \mid \F_{t_{n}} \big)
            \\=&~
            k^{q}\int_{|z| < c} |h(t_n,Y_n,z)|^{q} \,\overline{\mu}(\dif{z})
            =
            \frac{k^{q}}{\lambda}\int_{|z| < c} |h(t_n,Y_n,z)|^{q} \,\mu(\dif{z})
            \\\leq&~
            Ck^{q}(1 + |Y_{n}|^{q}).
      \end{align*}
      Due to the polynomial growth of $J$ and Assumption \ref{As:fghgrowthcond}, we have
      \begin{align*}
            &~\big|\E\big(J(Y_{n} + S_{k} - \varrho_{n}\Delta) 
            - J(Y_{n}) \mid \F_{t_{n}}\big)\big|
            \\\leq&~
            C\E\big(\big( 1 + |Y_{n}|^{q} + |S_{k}|^{q} 
            + |\varrho_{n}|^{q}\Delta^{q}\big) 
            + \big(1 + |Y_{n}|^{q}\big) \mid \F_{t_{n}}\big)
            \\\leq&~
            C\big( 1 + |Y_{n}|^{q} 
            + |\varrho_{n}|^{q}\Delta^{q}\big)
            +
            C\E\big(|S_{k}|^{q} \mid \F_{t_{n}} \big)
            \\\leq&~
            C(1+k^{q})\big(1 + |Y_{n}|^{q}\big),
      \end{align*} 
      where $C$ does not depend on $k$. It follows from $\sum_{k=2}^{\infty}
            (1+k^{q})\frac{(\lambda\Delta)^{k-2}}{k!} < \infty$ that
      \begin{align}\label{eq:kcasethree}
            &~\bigg|\sum_{k=2}^{\infty}\P(K_{n} = k)
            \E\big(J(Y_{n} + S_{k} - \varrho_{n}\Delta) 
            - J(Y_{n}) \mid \F_{t_{n}}\big) \bigg| \notag
            \\\leq&~
            C\big(1 + |Y_{n}|^{q}\big)
            e^{-\lambda\Delta}\sum_{k=2}^{\infty}
            (1+k^{q})\frac{(\lambda\Delta)^{k}}{k!}  \notag
            \\\leq&~
            C\big(1 + |Y_{n}|^{q}\big)
            (\lambda\Delta)^{2}\sum_{k=2}^{\infty}
            (1+k^{q})\frac{(\lambda\Delta)^{k-2}}{k!}  \notag
            \\\leq&~
            C\big(1 + |Y_{n}|^{q}\big)\Delta^{2}.
      \end{align}
      Substituting \eqref{eq:kcaseone}, \eqref{eq:kcasetwo} and
      \eqref{eq:kcasethree} into \eqref{eq:Jumppartequivtotal}  
      gives \eqref{eq:Jumppartequiv} and accordingly \eqref{eq:Jumppart}. 
\end{proof}

To analyze the regularity of solution $u$ to \eqref{eq:Kbpide} and bound the time discretization errors of the generator, we introduce the following H\"{o}lder continuity with respect to the time variable.

\begin{assumption}\label{ass:fghtimeLipschitz}
      There exist constants $C > 0$ and $q_{0} \geq 1$ such that for any $s,t \geq 0$ and $x \in \R^{d}$,
      \begin{align*}
            |f(t,x) - f(s,x)|^{2} + |g(t,x) - g(s,x)|^{2}
            +
            \int_{|z| < c} |h(t,x,z) - h(s,x,z)|^{2} \,\mu(\dif{z})
            \leq
            C(1+|x|^{q_{0}})|t-s|^{2}.
      \end{align*} 
\end{assumption}

Under Assumption \ref{ass:fghtimeLipschitz} and the regularity on $f, g, h$, one can prove the time-Lipschitz continuity of $u, \nabla u$ and $\nabla^{2}u$, i.e., there exist constants $C > 0$ and $q \geq 1$ such that for any $s,t \geq 0$ and $x \in \R^{d}$,
\begin{align}\label{eq:utimelips}
      |u(t,x) - u(s,x)| + |\nabla u(t,x) - \nabla u(s,x)|
      +
      |\nabla^{2}u(t,x) - \nabla^{2}u(s,x)|
      \leq
      C(1+|x|^{q})|t-s|.
\end{align}   
%
More precisely, differentiating the KB-PIDE $\partial_t u(t,x)=-\mathcal L_tu(t,x)$ and using
$u \in C_p^{1,4}$ (so that spatial derivatives commute with $\mathcal L_t$), we have
$\partial_t D_x^\alpha u(t,x)=-D_x^\alpha(\mathcal L_tu(t,\cdot))(x)$ for $|\alpha|\le2$.
Moreover, for any $t\in[0,T]$, $|D_x^\alpha(\mathcal L_tu)(t,x)|\le C(1+|x|^q), |\alpha| \leq 2$,
where $C$ is independent of $t$. Hence \eqref{eq:utimelips} follows by integrating in time. The following two lemmas explicitly quantify the time discretization errors associated with  $\mathcal{L}_s$, which are crucial for controlling the integral remainders in the subsequent weak expansion.

\begin{lemma}\label{lem:mathcalLslip1}
      Assume that all components of $f(t,\cdot), g(t,\cdot), h(t,\cdot,z)$ belong to $C_{p}^{4}(\R^{d};\R)$ for any $t \geq 0$ and $z \in \R^{d} \backslash \{0\}$. Suppose that Assumptions \ref{ass:globallinear} and \ref{ass:fghtimeLipschitz} hold. Then there exists a constant $C > 0$ and an integer $q \geq 1$ such that for any $s \in [t_{n},t_{n+1}]$, 
      \begin{align}
            \big|\mathcal{L}_{s}u(t_{n+1},x) 
            - \mathcal{L}_{s}u(s,x)\big|
            \leq
            C(1+|x|^{q})|t_{n+1}-s|.
      \end{align}
\end{lemma}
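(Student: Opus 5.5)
The plan is to exploit the fact that the operator $\mathcal{L}_{s}$ is the same in both terms and is linear in its argument. Write $w(x) := u(t_{n+1},x) - u(s,x)$ for fixed $s \in [t_{n},t_{n+1}]$, so that
\begin{align*}
      \mathcal{L}_{s}u(t_{n+1},x) - \mathcal{L}_{s}u(s,x)
      =
      \big\langle \nabla w(x), f(s,x)\big\rangle
      +
      \frac{1}{2}\tr\big(g^{\top}(s,x)\nabla^{2}w(x)g(s,x)\big)
      +
      \int_{|z|<c} \big(w(x+h(s,x,z)) - w(x) - \langle \nabla w(x),h(s,x,z)\rangle\big)\,\mu(\dif{z}).
\end{align*}
The key input is the time-Lipschitz estimate \eqref{eq:utimelips}. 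It gives $|w(y)| + |\nabla w(y)| + |\nabla^{2}w(y)| \leq C(1+|y|^{q})|t_{n+1}-s|$ for every $y \in \R^{d}$.

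The drift and diffusion terms are routine. For the drift term, the linear growth in Assumption \ref{ass:globallinear} gives $|\langle \nabla w(x),f(s,x)\rangle| \leq C(1+|x|^{q})(1+|x|)|t_{n+1}-s|$. For the diffusion term, $|\tr(g^{\top}\nabla^{2}w\,g)| \leq |\nabla^{2}w(x)|\,|g(s,x)|^{2} \leq C(1+|x|^{q})(1+|x|^{2})|t_{n+1}-s|$. In both cases I absorb the extra polynomial factors into $(1+|x|^{q})$ after enlarging $q$.

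The jump integral is the step needing care, since I want to avoid requiring third derivatives of $w$. I will use the second-order Taylor formula with integral remainder:
\begin{align*}
      w(x+h) - w(x) - \langle \nabla w(x),h\rangle = \int_{0}^{1}(1-r)\,h^{\top}\nabla^{2}w(x+rh)\,h \diff{r}.
\end{align*}
Applying \eqref{eq:utimelips} at the point $x+rh$ bounds the integrand by
\begin{align*}
      C(1+|x|^{q}+|h(s,x,z)|^{q})\,|h(s,x,z)|^{2}\,|t_{n+1}-s|.
\end{align*}
Integrating in $z$, I control $\int_{|z|<c}|h|^{2}\,\mu(\dif{z})$ by Assumption \ref{ass:globallinear} and $\int_{|z|<c}|h|^{q+2}\,\mu(\dif{z})$ by the moment growth in Assumption \ref{As:fghgrowthcond}. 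Together these give $C(1+|x|^{q+2})|t_{n+1}-s|$. The statement does not explicitly list Assumption \ref{As:fghgrowthcond}, so it must be invoked here; this step therefore relies on that assumption being in force, as it is throughout the section.

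Summing the three bounds and renaming $q$ yields the claim. The only genuine subtlety is making sure the constant in \eqref{eq:utimelips} is uniform in $s,t$. This is asserted just before the lemma, since the bound on $D_{x}^{\alpha}\mathcal{L}_{t}u$ is independent of $t$, so the resulting $C$ does not depend on $n$ or $\Delta$.
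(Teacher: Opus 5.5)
Your proof is correct. It differs from the paper's only in how the jump term is handled; the drift and diffusion estimates are the same as the paper's $A_{1}$ and $A_{2}$.

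\textbf{The paper's jump estimate.} The paper does not Taylor expand. It bounds the three pieces $|w(x+h)|$, $|w(x)|$ and $|\nabla w(x)|\,|h|$ separately, using only the zeroth- and first-order parts of \eqref{eq:utimelips}. This yields $C|t_{n+1}-s|\int_{|z|<c}(1+|x|^{q}+|h(s,x,z)|^{q})\,\mu(\dif{z})$. Integrating the constant part (and $|h|$ via Cauchy--Schwarz) then requires the finite activity $\mu(\{|z|<c\})<+\infty$.

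\textbf{Your jump estimate.} Your second-order Taylor remainder exploits the compensated structure $w(x+h)-w(x)-\langle\nabla w(x),h\rangle$. It uses the Hessian part of \eqref{eq:utimelips} at the shifted points $x+rh$, so the integrand is of size $|h|^{2}(1+|x|^{q}+|h|^{q})$. You therefore never need $\mu$ to be finite, only control of $\int_{|z|<c}|h|^{2}\,\mu(\dif{z})$ and $\int_{|z|<c}|h|^{q+2}\,\mu(\dif{z})$. This is the form of the estimate that would survive in the infinite-activity setting the conclusion mentions. The paper's route, by contrast, needs only $w$ and $\nabla w$, but is tied to finite activity.

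\textbf{On Assumption \ref{As:fghgrowthcond}.} The paper's argument needs this assumption just as much as yours. Bounding $\int_{|z|<c}|h(s,x,z)|^{q}\,\mu(\dif{z})$ by $C(1+|x|^{q})$ for a generic $q>2$ does not follow from the square-integrability in Assumption \ref{ass:globallinear}. The paper uses it tacitly under the phrase ``linear growth of $h$''. Flagging it is appropriate and is not a weakness of your approach.
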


\begin{proof}
      Applying \eqref{eq:mathcalLs} yields the following decomposition
      \begin{align*}
            &~\mathcal{L}_{s}u(t_{n+1},x)-\mathcal{L}_{s}u(s,x)
            \\=&~
            \big\langle {\nabla}u(t_{n+1},x) - {\nabla}u(s,x), f(s,x)\big\rangle
            +
			\frac{1}{2}\tr\big(g(s,x)g^{\top}(s,x)
            (\nabla^{2}u(t_{n+1},x)-\nabla^{2}u(s,x))\big)
            \\&~+
			\int_{|z|<c} \big(u(t_{n+1},x+h(s,x,z)) - u(s,x+h(s,x,z))\big)
            - \big(u(t_{n+1},x) - u(s,x)\big)
			\\&~- \langle {\nabla}u(t_{n+1},x) - \nabla{u}(s,x),h(s,x,z) \rangle \,\mu(\dif{z}) 
            =:
            A_{1} + A_{2} + A_{3}.         
      \end{align*}
      The linear growth of $f$ and \eqref{eq:utimelips} show that
      \begin{align*}
            |A_{1}|
            \leq
            |{\nabla}u(t_{n+1},x) - {\nabla}u(s,x)| |f(s,x)|
            \leq
            C(1+|x|^{q})|t_{n+1}-s|.
      \end{align*}
      By the linear growth of $g$ and \eqref{eq:utimelips}, one gets
      \begin{align*}
            |A_{2}|
            \leq&~
            \frac{1}{2}\big|\tr\big(g(s,x)g^{\top}(s,x)
            (\nabla^{2}u(t_{n+1},x)-\nabla^{2}u(s,x))\big)\big|
            \\\leq&~
            \frac{1}{2}|g(s,x)|^{2}
            |\nabla^{2}u(t_{n+1},x)-\nabla^{2}u(s,x)|
            \\\leq&~
            C(1+|x|^{q})|t_{n+1}-s|.
      \end{align*}
      Using the linear growth of $h$, \eqref{eq:utimelips}, and $\mu(\{|z| < c\}) < +\infty$ yields
      \begin{align*}
            |A_{3}|
            \leq&~
            \int_{|z|<c} \big|u(t_{n+1},x+h(s,x,z)) - u(s,x+h(s,x,z))\big|
            + \big|u(t_{n+1},x) - u(s,x)\big|
			\\&~+ |{\nabla}u(t_{n+1},x) - \nabla{u}(s,x)|\,|h(s,x,z)|\,\mu(\dif{z}) 
            \\\leq&~
            C\int_{|z|<c} \big(1 + |x+h(s,x,z)|^{q}\big)\,|t_{n+1}-s|
            + \big(1 + |x|^{q}\big)\,|t_{n+1}-s|
			\\&~+ \big(1 + |x|^{q}\big)\,|t_{n+1}-s|\,|h(s,x,z)|\,\mu(\dif{z})
            \\\leq&~
            C|t_{n+1}-s|\int_{|z|<c} \big(1 + |x|^{q} + |h(s,x,z)|^{q}\big)\,\mu(\dif{z})
            \\\leq&~
            C(1+|x|^{q})|t_{n+1}-s|.
      \end{align*}
      Then we obtain the desired result and end the proof.
\end{proof}

\begin{lemma}\label{lem:mathcalLslip2}
      Assume that all components of $f(t,\cdot), g(t,\cdot), h(t,\cdot,z)$ belong to $C_{p}^{4}(\R^{d};\R)$ for any $t \geq 0$ and $z \in \R^{d} \backslash \{0\}$. Suppose that Assumptions \ref{ass:globallinear} and \ref{ass:fghtimeLipschitz} hold. Then there exists constants $C > 0$ and $q \geq 1$ such that for any $s \in [t_{n},t_{n+1}]$, 
      \begin{align*}
            \big|\mathcal{L}_{s}u(t_{n+1},x) - \mathcal{L}_{t_n}u(t_{n+1},x)\big|
            \leq
            C(1+|x|^{q})|s-t_{n}|.
      \end{align*}
\end{lemma}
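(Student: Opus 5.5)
Fix $x\in\R^{d}$ and $s\in[t_{n},t_{n+1}]$, and write $\psi(\cdot):=u(t_{n+1},\cdot)$. This function belongs to $C_{p}^{4}(\R^{d};\R)$, and its derivatives up to order two are bounded by $C(1+|\cdot|^{q})$ with $C$ independent of $n$. Since the spatial argument of $u$ is now frozen, the difference $\mathcal{L}_{s}\psi(x)-\mathcal{L}_{t_n}\psi(x)$ comes only from the time dependence of the coefficients. Following the proof of Lemma \ref{lem:mathcalLslip1}, I would use \eqref{eq:mathcalLs} to split it into a drift term $B_{1}$, a diffusion term $B_{2}$ and a jump term $B_{3}$:
\begin{align*}
B_{1}&:=\big\langle \nabla\psi(x), f(s,x)-f(t_n,x)\big\rangle,\qquad
B_{2}:=\tfrac12\tr\big(g^{\top}(s,x)\nabla^{2}\psi(x)g(s,x)-g^{\top}(t_n,x)\nabla^{2}\psi(x)g(t_n,x)\big),\\
B_{3}&:=\int_{|z|<c}\Big(\psi(x+h(s,x,z))-\psi(x+h(t_n,x,z))-\big\langle\nabla\psi(x),h(s,x,z)-h(t_n,x,z)\big\rangle\Big)\,\mu(\dif{z}).
\end{align*}

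For $B_{1}$, the Cauchy--Schwarz inequality and Assumption \ref{ass:fghtimeLipschitz} give $|B_{1}|\le C(1+|x|^{q})(1+|x|^{q_0/2})|s-t_n|$. For $B_{2}$, I would write $g^{\top}(s)Mg(s)-g^{\top}(t_n)Mg(t_n)=(g(s)-g(t_n))^{\top}Mg(s)+g(t_n)^{\top}M(g(s)-g(t_n))$ with $M=\nabla^{2}\psi(x)$. The linear growth of $g$ from Assumption \ref{ass:globallinear} together with Assumption \ref{ass:fghtimeLipschitz} then yields $|B_{2}|\le C(1+|x|^{q})|s-t_n|$. In both cases I absorb the polynomial factors into a new exponent $q$.

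The main care is needed for the jump term $B_{3}$. I would apply the mean value theorem in integral form to write
\[
\psi(x+h(s,x,z))-\psi(x+h(t_n,x,z))=\int_0^1\big\langle\nabla\psi(x+h_r),\,h(s,x,z)-h(t_n,x,z)\big\rangle\diff{r},
\]
where $h_r:=h(t_n,x,z)+r(h(s,x,z)-h(t_n,x,z))$. The integrand of $B_{3}$ is then bounded by $C\big(1+|x|^{q}+|h(s,x,z)|^{q}+|h(t_n,x,z)|^{q}\big)\,|h(s,x,z)-h(t_n,x,z)|$. Integrating in $z$ and applying the Cauchy--Schwarz inequality in $L^{2}(\mu)$ bounds $|B_{3}|$ by
\[
C\Big(\int_{|z|<c}\big(1+|x|^{2q}+|h(s,x,z)|^{2q}+|h(t_n,x,z)|^{2q}\big)\mu(\dif{z})\Big)^{1/2}\Big(\int_{|z|<c}|h(s,x,z)-h(t_n,x,z)|^{2}\mu(\dif{z})\Big)^{1/2}.
\]
The first factor is at most $C(1+|x|^{q})$ by the finiteness of $\mu(\{|z|<c\})$ and the growth bound on $h$; the higher moments of $h$ are needed here, which Assumption \ref{As:fghgrowthcond} supplies. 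If I want to avoid that assumption, I would instead restrict the Taylor estimate to $\nabla^{2}\psi$ and use only the second moments from Assumption \ref{ass:globallinear}. The second factor is at most $C(1+|x|^{q_0/2})|s-t_n|$ by Assumption \ref{ass:fghtimeLipschitz}.

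Collecting the bounds for $B_{1}$, $B_{2}$ and $B_{3}$ and enlarging $q$ gives the claimed estimate $C(1+|x|^{q})|s-t_n|$, with $C$ independent of $n$ and $s$.
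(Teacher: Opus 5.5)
Your proposal is correct and follows essentially the same route as the paper: the same split into $B_{1},B_{2},B_{3}$, the same factorization of the quadratic-form difference in $B_{2}$, and a mean-value bound followed by Cauchy--Schwarz in $L^{2}(\mu)$ for $B_{3}$. You are right that the first Cauchy--Schwarz factor needs the higher moments of Assumption \ref{As:fghgrowthcond}, which the lemma does not list and which the paper uses tacitly under the name ``linear growth of $h$''; however, your proposed fallback via $\nabla^{2}\psi$ would not remove this need, since $\nabla^{2}\psi$ is only polynomially bounded and $\mathcal{L}_{s}\psi$ itself requires such moments to be well defined.
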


\begin{proof}
       From \eqref{eq:mathcalLs}, it follows that
      \begin{align*}
            &~\mathcal{L}_{s}u(t_{n+1},x)
            - \mathcal{L}_{t_n}u(t_{n+1},x)
            \\=&~
            \big\langle {\nabla}u(t_{n+1},x), f(s,x)- f(t_n,x)\big\rangle
			\\&~+
			\frac{1}{2}\tr\big(g^{\top}(s,x)
            \nabla^{2}u(t_{n+1},x)g(s,x)
            - g^{\top}(t_n,x)\nabla^{2}u(t_{n+1},x)g(t_n,x)\big)
			\\&~+
			\int_{|z|<c} \big(u(t_{n+1},x+h(s,x,z)) 
			- u(t_{n+1},x+h(t_n,x,z))\big) 
           \\&~-
           \big\langle {\nabla}u(t_{n+1},x),
           h(s,x,z) - h(t_n,x,z) \big\rangle \,\mu(\dif{z})
           =:
           B_{1} + B_{2} + B_{3}.
      \end{align*}      
      By Assumption \ref{ass:fghtimeLipschitz} and $u(t_{n+1},\cdot) \in C_{p}^{4}(\R^{d};\R)$, we have
      \begin{align*}
            |B_{1}|
            \leq
            |{\nabla}u(t_{n+1},x)|\,|f(s,x)- f(t_n,x)|
            \leq
            C(1+|x|^{q})|s-t_{n}|.
      \end{align*} 
      Making use of the linear growth of $g$, Assumption \ref{ass:fghtimeLipschitz} 
      and $u(t_{n+1},\cdot) \in C_{p}^{4}(\R^{d};\R)$ leads to
      \begin{align*}
            |B_{2}|
            \leq&~            
            \frac{1}{2}\big|\tr\big((g(s,x) - g(t_n,x))^{\top}
            \nabla^{2}u(t_{n+1},x)g(s,x)\big)\big|
            \\&~+
            \frac{1}{2}\big|\tr\big(g^{\top}(t_n,x)
            \nabla^{2}u(t_{n+1},x)(g(s,x)-g(t_n,x))\big)\big|
            \\\leq&~            
            C|(g(s,x) - g(t_n,x)|\,|\nabla^{2}u(t_{n+1},x)|\,|g(s,x)|
            \\&~+
            C|g(t_n,x)|\,|\nabla^{2}u(t_{n+1},x)|\,|g(s,x)-g(t_n,x)|
            \\\leq&~
            C(1+|x|^{q})|s-t_{n}|.
      \end{align*}
      Owing to $u(t_{n+1},\cdot) \in C_{p}^{4}(\R^{d};\R)$, 
      \begin{align*}
            |B_{3}|
            \leq&~
            \int_{|z|<c} |u(t_{n+1},x+h(s,x,z)) 
			- u(t_{n+1},x+h(t_n,x,z))| 
           \\&~+
           |{\nabla}u(t_{n+1},x)|\,
           |h(s,x,z) - h(t_n,x,z)|\,\mu(\dif{z})
           \\\leq&~
           C\int_{|z|<c} \big(1 + |x+h(s,x,z)|^{q} 
           + |x+h(t_n,x,z)|^{q}\big)
           |h(s,x,z) - h(t_n,x,z)| \,\mu(\dif{z})
           \\&~+
           \int_{|z|<c} |{\nabla}u(t_{n+1},x)|\,
           |h(s,x,z) - h(t_n,x,z)|\,\mu(\dif{z}).
      \end{align*}
      Together with the H\"{o}lder inequality, $u(t_{n+1},\cdot) \in C_{p}^{4}(\R^{d};\R)$, the linear growth of $h$, $\mu(\{|z| < c\}) < +\infty$, and Assumption \ref{ass:fghtimeLipschitz}, we deduce that
      \begin{align*}
            |B_{3}|
            \leq&~
            C\bigg(\int_{|z|<c} \big(1 + |x+h(s,x,z)|^{q} 
            + |x+h(t_n,x,z)|^{q}\big)^{2} \,\mu(\dif{z})\bigg)^{\frac{1}{2}}
            \\&~\times
            \bigg(\int_{|z|<c} |h(s,x,z) - h(t_n,x,z)|^{2} 
            \,\mu(\dif{z})\bigg)^{\frac{1}{2}}
            \\&~+
            |{\nabla}u(t_{n+1},x)|\,\int_{|z|<c} 
            |h(s,x,z) - h(t_n,x,z)|\,\mu(\dif{z})
            \\\leq&~
            C(1+|x|^{q})|s-t_{n}|.
      \end{align*}   
      As a consequence of estimates on $|B_{1}|$, $|B_{2}|$, and $|B_{3}|$, 
      the required result holds.   
\end{proof}

Consolidating the aforementioned estimates for the Brownian term, the jump term, and the associated generator, we now state the one-step weak consistency result for the frozen Euler approximation.
\begin{proposition}\label{prop:EMerror}
      Assume that all components of $f(t,\cdot), g(t,\cdot), h(t,\cdot,z)$ belong to $C_{p}^{4}(\R^{d};\R)$ for any $t \geq 0$ and $z \in \R^{d} \backslash \{0\}$. Suppose that Assumptions \ref{ass:globallinear}, \ref{As:fghgrowthcond}, \ref{ass:fghtimeLipschitz} hold, and let $\theta{L}\Delta \leq \frac{1}{2}$ with $\theta \in [0,1]$. Then there exists a constant $C > 0$ such that
      \begin{align*}
            \big|\E\big( u(t_{n+1},\overline{Y}_{n+1}) 
            - u(t_{n},Y_{n}) \mid \F_{t_{n}}\big)\big|
            \leq 
            C\big(1 + |Y_{n}|^{q}\big)\Delta^{2}.
      \end{align*}
\end{proposition}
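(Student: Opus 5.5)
The plan is to split the frozen Euler increment into its Brownian-drift part and its jump part, expand each using Lemmas \ref{lem:BMpart} and \ref{lem:Jumppart}, and then use the KB-PIDE \eqref{eq:Kbpide} to cancel the leading terms against $u(t_{n+1},Y_n)-u(t_n,Y_n)$. Set $\varphi_n(x):=u(t_{n+1},x)$ and $\chi_n:=f(t_n,Y_n)\Delta+g(t_n,Y_n)\Delta W_n$, so that $\overline{Y}_{n+1}=Y_n+\chi_n+H_n$. We write
\begin{align*}
\varphi_n(Y_n+\chi_n+H_n)-\varphi_n(Y_n)
=&~\big[\varphi_n(Y_n+\chi_n)-\varphi_n(Y_n)\big]
+\big[\varphi_n(Y_n+H_n)-\varphi_n(Y_n)\big]\\
&~+\big[\varphi_n(Y_n+\chi_n+H_n)-\varphi_n(Y_n+\chi_n)-\varphi_n(Y_n+H_n)+\varphi_n(Y_n)\big].
\end{align*}
The first bracket is handled by Lemma \ref{lem:BMpart}. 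For the second, $\E(\langle\nabla\varphi_n(Y_n),H_n\rangle\mid\F_{t_n})=0$, so Lemma \ref{lem:Jumppart} gives its conditional expectation as $\Delta$ times the integral term plus $\mathcal{O}((1+|Y_n|^q)\Delta^2)$.

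The cross term is the step I expect to be the main obstacle. I will write it as
\[
\int_0^1\big\langle\nabla\varphi_n(Y_n+\chi_n+rH_n)-\nabla\varphi_n(Y_n+rH_n),H_n\big\rangle\diff{r},
\]
and then expand the inner difference once more in $\chi_n$:
\[
\int_0^1\int_0^1\nabla^2\varphi_n(Y_n+r'\chi_n+rH_n)[\chi_n,H_n]\diff{r'}\diff{r}.
\]
Next I replace $\nabla^2\varphi_n(Y_n+r'\chi_n+rH_n)$ by $\nabla^2\varphi_n(Y_n)$.
\begin{itemize}
\item The main term $\nabla^2\varphi_n(Y_n)[\chi_n,H_n]$ has zero conditional expectation. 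Indeed, $H_n$ is conditionally independent of $\Delta W_n$ with mean zero, which kills $\E(\chi_n\otimes H_n\mid\F_{t_n})$.
\item The remainder involves $|\chi_n|+|H_n|$ multiplied by $|\chi_n||H_n|$. By H\"older's inequality together with \eqref{eq:chinpestimate} and \eqref{eq:Hnpestimate}, its conditional expectation is bounded by a polynomial weight in $Y_n$ times $\Delta^{1/2}\cdot\Delta^{1/2}\cdot\Delta^{1/2}\cdot$ (jump factor).
\end{itemize}
This gives only $\Delta^{3/2}$ in general, which is the delicate point. To reach $\Delta^2$, I plan to condition on $K_n$ as in Lemma \ref{lem:Jumppart}:
\begin{itemize}
\item On $\{K_n=0\}$, $H_n=-\varrho_n\Delta$ is deterministic of order $\Delta$, so the cross term is of order $\Delta\cdot\E|\chi_n|\cdot|\varrho_n|\Delta$, or its centered version, which is $\mathcal{O}(\Delta^2)$.
\item On $\{K_n=1\}$, which has probability $\mathcal{O}(\Delta)$, the term is bounded by $C\E(|\chi_n|\mid\F_{t_n})$ times polynomial weights, giving $\Delta\cdot\Delta^{1/2}$. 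This is still insufficient, so I will instead use the exact cancellation: $\chi_n$ is independent of $(K_n,z_{n,1})$ and $\E\chi_n=f\Delta$. A second-order Taylor expansion in $\chi_n$ at fixed jump then makes the $\Delta W_n$-linear part vanish. The remaining quadratic part is of order $\Delta$, which, multiplied by $\P(K_n=1)=\mathcal{O}(\Delta)$, yields $\Delta^2$.
\item On $\{K_n\ge2\}$, the probability is already $\mathcal{O}(\Delta^2)$.
\end{itemize}

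Summing up, we get
\begin{align*}
\E\big(\varphi_n(\overline{Y}_{n+1})\mid\F_{t_n}\big)
=&~\varphi_n(Y_n)+\Delta\,\mathcal{L}_{t_n}u(t_{n+1},Y_n)+\mathcal{O}\big((1+|Y_n|^q)\Delta^2\big),
\end{align*}
where $\mathcal{L}_{t_n}$ is the generator \eqref{eq:mathcalLs} with coefficients frozen at $t_n$. Finally, by \eqref{eq:Kbpide},
\[
u(t_{n+1},Y_n)-u(t_n,Y_n)=-\int_{t_n}^{t_{n+1}}\mathcal{L}_s u(s,Y_n)\diff{s}.
\]
Inserting $\pm\mathcal{L}_s u(t_{n+1},Y_n)$ and $\pm\mathcal{L}_{t_n}u(t_{n+1},Y_n)$ and applying Lemmas \ref{lem:mathcalLslip1} and \ref{lem:mathcalLslip2} bounds the difference from $-\Delta\,\mathcal{L}_{t_n}u(t_{n+1},Y_n)$ by $C(1+|Y_n|^q)\int_{t_n}^{t_{n+1}}(t_{n+1}-t_n)\diff{s}=C(1+|Y_n|^q)\Delta^2$. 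This closes the estimate.
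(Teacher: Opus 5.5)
Your argument is correct, but it handles the coupling between the Brownian and jump increments differently from the paper. The paper never forms your mixed second difference. It conditions on $\F_{t_n}^{H_n}:=\sigma(\F_{t_n},\sigma(H_n))$ and Taylor-expands $\varphi_n$ to fourth order in $\chi_n$ around the \emph{random} base point $Y_n+H_n$. After the Gaussian cancellation, the jumps then enter only through coefficients already multiplied by $\Delta$, namely $\Delta\langle\nabla\varphi_n(Y_n+H_n),f(t_n,Y_n)\rangle$ and $\frac{\Delta}{2}\tr(g^{\top}\nabla^{2}\varphi_n(Y_n+H_n)g)$. Shifting these back to $Y_n$ costs only $\mathcal{O}(\Delta)$, via a second-order expansion in $H_n$ that uses $\E(H_n\mid\F_{t_n})=0$ and $\E(|H_n|^p\mid\F_{t_n})\le C\Delta(1+|Y_n|^p)$. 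Lemma~\ref{lem:Jumppart} is then invoked only for $\E(\varphi_n(Y_n+H_n)\mid\F_{t_n})$.

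Your route applies Lemma~\ref{lem:BMpart} verbatim at the deterministic point $Y_n$ and isolates the interaction explicitly. The price is a second pass through the $K_n$-conditioning. On $\{K_n\ge 2\}$ you also need, as in Case~3 of Lemma~\ref{lem:Jumppart}, a conditional bound growing only polynomially in $k$, not merely $\P(K_n\ge2)=\mathcal{O}(\Delta^2)$.

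Your $K_n=1$ device, expanding in $\chi_n$ with the jump frozen, is precisely the paper's idea, and it makes the case split unnecessary if applied globally. Applied to the whole mixed difference conditionally on $H_n$, it yields $\Delta\langle\nabla\varphi_n(Y_n+H_n)-\nabla\varphi_n(Y_n),f(t_n,Y_n)\rangle$ plus a term bounded by a polynomial weight times $|H_n|\,|\chi_n|^2$. Finite activity gives $\E(|H_n|(1+|H_n|^q)\mid\F_{t_n})\le C\Delta(1+|Y_n|^{q+1})$, so both pieces are $\mathcal{O}((1+|Y_n|^q)\Delta^2)$.

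The paper's treatment of this step is slightly more robust. It uses only the martingale property and the $p$-th moment bounds ($p\ge 2$) of $H_n$, which confines the finite-activity structure to Lemma~\ref{lem:Jumppart}. Your case analysis relies on the Poisson structure throughout.
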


\begin{proof}
      Letting $\F_{t_n}^{H_{n}} := \sigma(\F_{t_n}, \sigma(H_{n}))$ yields $\F_{t_n} \subset \F_{t_n}^{H_{n}} \subset \F$. Setting $\varphi_{n}(x) := u(t_{n+1},x)$, the property of conditional expectation enables us to get  
      \begin{align*}
            \E\big(\varphi_{n}(\overline{Y}_{n+1}) \mid \F_{t_n}\big)
            =&~
            \E\big( \E\big(\varphi_{n}(\overline{Y}_{n+1}) \mid \F_{t_n}^{H_{n}}\big)
            \mid \F_{t_n}\big)
            =
            \E\big( \E\big(\varphi_{n}\big((Y_{n}+H_{n}) + \chi_{n}\big) \mid \F_{t_n}^{H_{n}}\big)
            \mid \F_{t_n}\big)
      \end{align*}
      with $\chi_{n} := f(t_{n},Y_{n})\Delta + g(t_{n},Y_{n})\Delta W_{n}$. Since $Y_{n}+H_{n}$ is $\F_{t_n}^{H_{n}}$-measurable, one can adopt the same techniques used in Lemma \ref{lem:BMpart} to deduce that
      \begin{align*}
            \E\big(\varphi_{n}\big((Y_{n}+H_{n}) + \chi_{n}\big) 
            \mid \F_{t_n}^{H_{n}}\big)
            =&~
            \varphi_{n}(Y_{n}+H_{n}) 
            + 
            \Delta\big\langle \nabla\varphi_{n}(Y_{n}+H_{n}),
            f(t_{n},Y_{n})\big\rangle
            \\&~+
            \frac{\Delta}{2}\tr\big(g(t_{n},Y_{n})^{\top}
            \nabla^{2}\varphi_{n}(Y_{n}+H_{n})g(t_{n},Y_{n})\big)
            +
            R_{n}^{B}.
      \end{align*}
      with the remainder
      \begin{align}\label{eq:Rnbterms} 
            R_{n}^{B}
            :=
            \int_{0}^{1} \frac{(1-r)^{3}}{6}
            \nabla^{4}\varphi_{n}(Y_{n}+H_{n} + r\chi_{n})
            [\chi_{n},\chi_{n},\chi_{n},\chi_{n}]\diff{r}.
      \end{align}
      It follows that
      \begin{align}\label{eq:varphibarYnftn}
            \E\big(\varphi_{n}(\overline{Y}_{n+1}) &~\mid \F_{t_n}\big) \notag
            =
            \E\big(\varphi_{n}(Y_{n}+H_{n}) \mid \F_{t_n}\big)
            + 
            \Delta\E\big(\big\langle \nabla\varphi_{n}(Y_{n}+H_{n}),
            f(t_{n},Y_{n})\big\rangle \mid \F_{t_n}\big) \notag
            \\&~+
            \frac{\Delta}{2}\E\big(\tr\big(g(t_{n},Y_{n})^{\top}
            \nabla^{2}\varphi_{n}(Y_{n}+H_{n})g(t_{n},Y_{n})\big)
            \mid \F_{t_n}\big)
            +
            \E\big(R_{n}^{B} \mid \F_{t_n}\big).
      \end{align}
      For $\E\big(\varphi_{n}(Y_{n}+H_{n}) \mid \F_{t_n}\big)$, Lemma \ref{lem:Jumppart},
      $\E(H_n \mid \F_{t_n}) = 0$ and $\E\big( \big\langle \nabla\varphi_{n}(Y_{n}),
            H_{n}\big\rangle \mid \F_{t_{n}}\big) = 0$ imply
      \begin{align}\label{eq:EvarphinYnplusHn}
            \E\big(\varphi_{n}(Y_{n} + H_{n}) \mid \F_{t_{n}}\big) \notag
            =&~
            \varphi_{n}(Y_{n}) 
            +
            \Delta\int_{|z| < c} \Big(\varphi_{n}(Y_{n} + h(t_{n},Y_{n},z))
            - \varphi_{n}(Y_{n})
            \\&~- \big\langle \nabla\varphi_{n}(Y_{n}),
            h(t_{n},Y_{n},z)\big\rangle\Big) \,\mu(\dif{z})
            +
            \mathcal{O}\big(\big(1 + |Y_{n}|^{q}\big)\Delta^{2}\big).
      \end{align}
      As for $\Delta\E\big(\big\langle \nabla\varphi_{n}(Y_{n}+H_{n}),
      f(t_{n},Y_{n})\big\rangle \mid \F_{t_n}\big)$, applying the Taylor 
      formula and  $\E(H_n \mid \F_{t_n}) = 0$ leads to
      \begin{align}\label{eq:DelatvarphinYnHn}
            &~\Delta\E\big(\big\langle \nabla\varphi_{n}(Y_{n}+H_{n}),
            f(t_{n},Y_{n})\big\rangle \mid \F_{t_n}\big) \notag
            \\=&~
            \Delta\E\bigg(\Big\langle \nabla\varphi_{n}(Y_{n})
            +
            \nabla^{2}\varphi_{n}(Y_{n})H_{n}
            +
            \int_{0}^{1} (1-r)\nabla^{3}\varphi_{n}(Y_{n}+rH_{n}) H_{n}H_{n} \diff{r},
            f(t_{n},Y_{n})\Big\rangle \,\Big|\, \F_{t_n}\bigg) \notag
            \\=&~
            \Delta\big\langle \nabla\varphi_{n}(Y_{n}),
            f(t_{n},Y_{n})\big\rangle 
            +
            \mathcal{O}\big(\big(1 + |Y_{n}|^{q}\big)\Delta^{2}\big)
      \end{align}
      due to
      \begin{align*}
            &~\bigg|\Delta\E\bigg(\Big\langle \int_{0}^{1} (1-r)
            \nabla^{3}\varphi_{n}(Y_{n}+rH_{n}) H_{n}H_{n} \diff{r},
            f(t_{n},Y_{n})\Big\rangle \,\Big|\, \F_{t_n}\bigg)\bigg|
            \\\leq&~
            \Delta |f(t_{n},Y_{n})| \E\bigg( \int_{0}^{1}  
            |\nabla^{3}\varphi_{n}(Y_{n}+rH_{n})| \,|H_{n}|^{2} \diff{r}
            \,\Big|\, \F_{t_n}\bigg)
            \\\leq&~
            C\Delta |f(t_{n},Y_{n})| \,\E\big( 
            (1 + |Y_{n}|^{q} + |H_{n}|^{q})|H_{n}|^{2}
            \mid \F_{t_n}\big)
            \\\leq&~
            C\big(1 + |Y_{n}|^{q}\big)\Delta^{2},
      \end{align*}
      where the polynomial growth of $\nabla^{3}\varphi_{n}$, 
      the linear growth of $f$, and \eqref{eq:Hnpestimate} have been used.
      In terms of $\frac{\Delta}{2}\E\big(\tr\big(g(t_{n},Y_{n})^{\top}
      \nabla^{2}\varphi_{n}(Y_{n}+H_{n})g(t_{n},Y_{n})\big) \mid \F_{t_n}\big)$,
      the Taylor expansion 
      \begin{align*}
            \nabla^{2}\varphi_{n}(Y_{n}+H_{n}) - \nabla^{2}\varphi_{n}(Y_{n})
            =
            \nabla^{3}\varphi_{n}(Y_{n})H_{n}
            +
            \int_{0}^{1} (1-r)\nabla^{4}\varphi_{n}(Y_{n}+rH_{n}) H_{n}H_{n} \diff{r}
      \end{align*}
      and  $\E(H_n \mid \F_{t_n}) = 0$ show that
      \begin{align}\label{eq:Trgthn2vargtn}
            &~\frac{\Delta}{2}\E\big(\tr\big(g(t_{n},Y_{n})^{\top}
            \nabla^{2}\varphi_{n}(Y_{n}+H_{n})g(t_{n},Y_{n})\big) \mid \F_{t_n}\big) \notag
            \\=&~
            \frac{\Delta}{2}\tr\big(g(t_{n},Y_{n})^{\top}
            \nabla^{2}\varphi_{n}(Y_{n}) g(t_{n},Y_{n})\big)  \notag
            \\&~+
            \frac{\Delta}{2}\E\bigg(\tr\bigg(g(t_{n},Y_{n})^{\top}
            \Big(\int_{0}^{1} (1-r)\nabla^{4}\varphi_{n}(Y_{n}+rH_{n}) 
            H_{n}H_{n} \diff{r}\big)
            g(t_{n},Y_{n})\bigg) \,\Big|\, \F_{t_n}\bigg)   \notag
            \\=&~
            \frac{\Delta}{2}\tr\big(g(t_{n},Y_{n})^{\top}
            \nabla^{2}\varphi_{n}(Y_{n}) g(t_{n},Y_{n})\big)
            +
            \mathcal{O}\big(\big(1 + |Y_{n}|^{q}\big)\Delta^{2}\big).         
      \end{align}
      Applying \eqref{eq:chinpestimate}, \eqref{eq:Rnbterms}, 
      \eqref{eq:Hnpestimate}, the polynomial growth of $\nabla^{3}\varphi_{n}$, 
      and the H\"{o}lder inequality yields
      \begin{align}\label{eq:ERnBFtn}
            \big|\E\big(R_{n}^{B} \mid \F_{t_n}\big)\big| \notag
            \leq&~
            \bigg|\E\bigg(\int_{0}^{1} \frac{(1-r)^{3}}{6}
            |\nabla^{4}\varphi_{n}(Y_{n}+H_{n} + r\chi_{n})|
            |\chi_{n}|^{4}\diff{r} 
            \,\Big|\, \F_{t_n}\bigg)\bigg| \notag
            \\\leq&~
            C\E\big((1 + |Y_{n}|^{q} + |H_{n}|^{q} + |\chi_{n}|^{q})
            |\chi_{n}|^{4} \mid \F_{t_n}\big) \notag
            \\\leq&~
            C(1 + |Y_{n}|^{q})\E\big(
            |\chi_{n}|^{4} \mid \F_{t_n}\big)
            +
            C\E\big(|H_{n}|^{q}
            |\chi_{n}|^{4} \mid \F_{t_n}\big)
            +
            C\E\big(|\chi_{n}|^{q+4} \mid \F_{t_n}\big) \notag
            \\\leq&~
            C(1 + |Y_{n}|^{q})\Delta^{2}
            +
            C\big(\E\big(|H_{n}|^{2q}
            \mid \F_{t_n}\big)\big)^{\frac{1}{2}}
            \big(\E\big(|\chi_{n}|^{8} 
            \mid \F_{t_n}\big)\big)^{\frac{1}{2}} \notag
            \\\leq&~
            C(1 + |Y_{n}|^{q})\Delta^{2}.
      \end{align}
      Putting \eqref{eq:EvarphinYnplusHn}, \eqref{eq:DelatvarphinYnHn}, 
      \eqref{eq:Trgthn2vargtn} and \eqref{eq:ERnBFtn} into \eqref{eq:varphibarYnftn} results in 
      \begin{align}\label{eq:varphibarYnftnoutcome}
            \E\big(\varphi_{n}(\overline{Y}_{n+1}) \mid \F_{t_n}\big) \notag
            =&~
            \varphi_{n}(Y_{n}) 
            + 
            \Delta \mathcal{L}_{t_{n}}\varphi_{n}(Y_{n})
            +
            \mathcal{O}\big(\big(1 + |Y_{n}|^{q}\big)\Delta^{2}\big) \notag
            \\=&~
            u(t_{n+1},Y_{n})
            + 
            \Delta \mathcal{L}_{t_{n}}u(t_{n+1},Y_{n})
            +
            \mathcal{O}\big(\big(1 + |Y_{n}|^{q}\big)\Delta^{2}\big),
      \end{align}
      where \eqref{eq:mathcalLs} and $\varphi_{n}(x) := u(t_{n+1},x)$ have been used.

      To proceed, integrating $\partial_{s}u(s,Y_{n}) + \mathcal{L}_{s}u(s,Y_{n}) = 0$
      for $s \in [t_{n},t_{n+1}]$ gives
      \begin{align*}
            &~u(t_{n+1},Y_{n}) 
             =
            u(t_{n},Y_{n})            
            - 
            \Delta \mathcal{L}_{t_{n}}u(t_{n+1},Y_{n})
            - 
            \int_{t_n}^{t_{n+1}} \mathcal{L}_{s}u(s,Y_{n})
            - \mathcal{L}_{t_n}u(t_{n+1},Y_{n}) \diff{s}.
      \end{align*}
      Owing to Lemmas \ref{lem:mathcalLslip1} and \ref{lem:mathcalLslip2}, we deduce that
      \begin{align*}
            \bigg|\int_{t_n}^{t_{n+1}} \mathcal{L}_{s}u(s,Y_{n})
            - \mathcal{L}_{t_n}u(t_{n+1},Y_{n}) \diff{s}\bigg|
            \leq
            C\int_{t_n}^{t_{n+1}}(1+|Y_{n}|^{q})\Delta \diff{s}
            \leq
            C(1 + |Y_{n}|^{q})\Delta^{2},
      \end{align*}
      and thus $u(t_{n+1},Y_{n}) + \Delta \mathcal{L}_{t_{n}}u(t_{n+1},Y_{n})
      = u(t_{n},Y_{n}) + \mathcal{O}\big(\big(1 + |Y_{n}|^{q}\big)\Delta^{2}\big)$.
      In combination with \eqref{eq:varphibarYnftnoutcome}, we derive the desired 
      conclusion and thus complete the proof.
\end{proof}

\subsection{Weak contribution of the implicit correction}  
This subsection aims to evaluate the weak contribution of the implicit correction $\delta_{n+1} := Y_{n+1} - \overline{Y}_{n+1}$ introduced by the stochastic theta method \eqref{eq:Stmethod} relative to the purely explicit frozen Euler step \eqref{eq:YEMfrozen}. To this end, we first provide the conditional moment estimates for the relevant increments.

\begin{lemma}
      Suppose that Assumptions \ref{ass:globallinear}, \ref{As:fghgrowthcond} hold, and let $\theta{L}\Delta \leq \frac{1}{2}$ with $\theta \in [0,1]$. Then for any $p \geq 2$, there exists a constant $C > 0$ and an integer $q \geq 1$ such that for any $n \in \N$,
      \begin{align}\label{eq:barYestimate}
            \E\big(|\overline{Y}_{n+1} - Y_{n}|^{p} \mid \F_{t_{n}}\big)
            \leq
            C\Delta(1 + |Y_{n}|^{q}),
            \quad
            \E\big(|\overline{Y}_{n+1}|^{p} \mid \F_{t_{n}}\big)
            \leq
            C(1 + |Y_{n}|^{q}).            
      \end{align}
\end{lemma}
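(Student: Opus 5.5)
The plan is to write the increment of the frozen Euler step as $\overline{Y}_{n+1} - Y_{n} = f(t_{n},Y_{n})\Delta + G_{n} + H_{n}$, using the notation \eqref{eq:FGHnotation}. We then bound each of the three pieces in conditional $L^{p}$ and use the elementary inequality $|a+b+c|^{p} \leq 3^{p-1}(|a|^{p}+|b|^{p}+|c|^{p})$.

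For the drift piece, the linear growth in Assumption \ref{ass:globallinear} gives $|f(t_{n},Y_{n})\Delta|^{p} \leq C\Delta^{p}(1+|Y_{n}|^{p})$. This is at most $C\Delta(1+|Y_{n}|^{p})$ because $\Delta \in (0,1)$ and $p \geq 2$.

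For the Brownian piece, $\Delta W_{n}$ is independent of $\F_{t_{n}}$ and $g(t_{n},Y_{n})$ is $\F_{t_{n}}$-measurable. Hence $\E(|G_{n}|^{p} \mid \F_{t_{n}}) \leq |g(t_{n},Y_{n})|^{p}\E|\Delta W_{n}|^{p} \leq C\Delta^{p/2}(1+|Y_{n}|^{p})$, which again is at most $C\Delta(1+|Y_{n}|^{p})$. This computation already appears in the proof of Proposition \ref{prop:EYnpestimate}.

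For the jump piece, we cite \eqref{eq:Hnpestimate}, which rests on the Kunita/BDG-type inequality together with Assumption \ref{As:fghgrowthcond}. It gives $\E(|H_{n}|^{p} \mid \F_{t_{n}}) \leq C\Delta(1+|Y_{n}|^{p})$. This is the only piece that is merely $O(\Delta)$ rather than $O(\Delta^{p/2})$, and it dictates the order in the first estimate. We treat it as the main point to check: that the constant in \eqref{eq:Hnpestimate} is uniform in $n$, and that one can legitimately condition on $\F_{t_{n}}$ with the frozen integrand $h(t_{n},Y_{n},\cdot)$. Both follow from the independence of the Poisson increments on $(t_{n},t_{n+1}]$ from $\F_{t_{n}}$.

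Summing the three bounds yields the first inequality in \eqref{eq:barYestimate} with $q = p$.

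For the second inequality, we write $\overline{Y}_{n+1} = Y_{n} + (\overline{Y}_{n+1} - Y_{n})$ and use $|a+b|^{p} \leq 2^{p-1}(|a|^{p}+|b|^{p})$. Since $Y_{n}$ is $\F_{t_{n}}$-measurable, the first estimate gives $\E(|\overline{Y}_{n+1}|^{p} \mid \F_{t_{n}}) \leq 2^{p-1}|Y_{n}|^{p} + C\Delta(1+|Y_{n}|^{p}) \leq C(1+|Y_{n}|^{q})$, again with $q = p$.

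No part of the argument requires the implicitness parameter $\theta$. The condition $\theta L\Delta \leq \frac{1}{2}$ is only inherited from the setting.
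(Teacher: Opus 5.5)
Your proof is correct and matches the paper's argument: the same three-term splitting with the factor $3^{p-1}$, the same drift, Brownian and Kunita-type jump bounds under linear growth and Assumption \ref{As:fghgrowthcond}, and the same $2^{p-1}$ step for the second estimate. Citing the conditional bound \eqref{eq:Hnpestimate} for the jump term is slightly cleaner than the paper's write-up, which states that bound with unconditional expectations. One cosmetic fix: since the statement asks for an integer $q$, take $q=\lceil p\rceil$ (using $|Y_n|^{p}\le 1+|Y_n|^{\lceil p\rceil}$) rather than $q=p$.
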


\begin{proof}
      Since $Y_{n}$ is $\F_{t_n}$-measurable, we have
      \begin{align*}
            \E\big(|\overline{Y}_{n+1} - Y_{n}|^{p} \mid \F_{t_{n}}\big)
            \leq&~
            3^{p-1}\E\big(|f(t_{n},Y_{n})\Delta|^{p} \mid \F_{t_{n}}\big)
            +
            3^{p-1}\E\big(|g(t_{n},Y_{n})\Delta W_{n} |^{p} \mid \F_{t_{n}}\big)
            \\&~+
            3^{p-1}\E\bigg(\bigg|\int_{t_{n}}^{t_{n+1}}\int_{|z|<c}
            h(t_n,Y_n,z) \,\widetilde{N}(\dif{z},\dif{s})\bigg|^{p} \,\Big|\, \F_{t_{n}}\bigg)
            \\\leq&~
            C|f(t_{n},Y_{n})\Delta|^{p}
            +
            C|g(t_{n},Y_{n})|^{p}
            \E\big[|\Delta W_{n} |^{p}\big]
            \\&~+
            C\E\bigg[\bigg(\int_{t_{n}}^{t_{n+1}}\int_{|z|<c}
            |h(t_n,Y_n,z)|^{2} \,\mu(\dif{z})\dif{s}\bigg)^{\frac{p}{2}}\bigg]
            \\&~+
            C\E\bigg[\int_{t_{n}}^{t_{n+1}}\int_{|z|<c}
            |h(t_n,Y_n,z)|^{p} \,\mu(\dif{z})\dif{s}\bigg]
            \\\leq&~
            C\Delta(1 + |Y_{n}|^{q})
      \end{align*}
      due to the linear growth of $f,g,h$. Together with  
      \begin{align*}
             \E\big(|\overline{Y}_{n+1}|^{p} \mid \F_{t_{n}}\big)
             \leq&~
             2^{p-1}\E\big(|\overline{Y}_{n+1} - Y_{n}|^{p} \mid \F_{t_{n}}\big)
             +
             2^{p-1}|Y_{n}|^{p},
      \end{align*}
      we complete the proof.
\end{proof}

Building upon the increment bounds, we now establish the conditional moment estimates for the implicit correction term $\delta_{n+1}$ itself.
\begin{lemma}
      Suppose that Assumptions \ref{ass:globallinear}, \ref{As:fghgrowthcond}, \ref{ass:fghtimeLipschitz} hold, and let $\theta{L}\Delta \leq \frac{1}{2}$ with $\theta \in [0,1]$. Then for any $p \geq 2$, there exists a constant $C > 0$ and an integer $q \geq 1$ such that for any $n \in \N$,
      \begin{equation}\label{eq:delta-mean-var}
            \big|\E(\delta_{n+1}\mid\F_{t_n})\big|\le C(1+|Y_n|^q)\Delta^2,
            \quad
            \E(|\delta_{n+1}|^{p}\mid\F_{t_n})\le C(1+|Y_n|^q)\Delta^{p+1}.
      \end{equation}
\end{lemma}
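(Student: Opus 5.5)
The plan starts from an explicit formula for the correction. Subtracting \eqref{eq:YEMfrozen} from \eqref{eq:Stmethod}, the Brownian and jump increments cancel exactly, and the two drift contributions combine into
\begin{equation*}
      \delta_{n+1} = Y_{n+1}-\overline{Y}_{n+1}
      = \theta\Delta\big(f(t_{n+1},Y_{n+1}) - f(t_n,Y_n)\big).
\end{equation*}
Both estimates in \eqref{eq:delta-mean-var} will be read off from this identity. The moment bound comes from a small-step absorption argument. The mean bound needs a first-order Taylor expansion of $f$ in space, because the Lipschitz bound alone only gives $\Delta^{3/2}$.

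\textbf{Moment bound.} Split $f(t_{n+1},Y_{n+1})-f(t_n,Y_n)$ into three differences.
\begin{itemize}
      \item The difference $f(t_{n+1},Y_{n+1})-f(t_{n+1},\overline{Y}_{n+1})$ is bounded by $L|\delta_{n+1}|$.
      \item The difference $f(t_{n+1},\overline{Y}_{n+1})-f(t_{n+1},Y_n)$ is bounded by $L|\overline{Y}_{n+1}-Y_n|$.
      \item The difference $f(t_{n+1},Y_n)-f(t_n,Y_n)$ is bounded by $C(1+|Y_n|^{q})\Delta$, by Assumption \ref{ass:fghtimeLipschitz}.
\end{itemize}
This gives $|\delta_{n+1}|\le \theta L\Delta|\delta_{n+1}| + \theta L\Delta|\overline{Y}_{n+1}-Y_n| + C(1+|Y_n|^q)\Delta^2$. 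Since $\theta L\Delta\le\frac12$, the first term can be absorbed into the left-hand side, yielding $|\delta_{n+1}|\le 2L\Delta|\overline{Y}_{n+1}-Y_n| + C(1+|Y_n|^q)\Delta^2$. Raise this to the $p$th power, take $\E(\cdot\mid\F_{t_n})$, and apply the first bound in \eqref{eq:barYestimate}. The result is $C\Delta^{p}\cdot\Delta(1+|Y_n|^q) + C\Delta^{2p}(1+|Y_n|^{q})\le C(1+|Y_n|^q)\Delta^{p+1}$.

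\textbf{Mean bound.} It suffices to show $|\E(f(t_{n+1},Y_{n+1})-f(t_n,Y_n)\mid\F_{t_n})|\le C(1+|Y_n|^q)\Delta$. Use the same three-way split.
\begin{itemize}
      \item \emph{Correction piece.} Its conditional mean is at most $L\,\E(|\delta_{n+1}|\mid\F_{t_n})\le L\big(\E(|\delta_{n+1}|^2\mid\F_{t_n})\big)^{1/2}\le C(1+|Y_n|^q)\Delta^{3/2}$, by the moment bound just proved with $p=2$.
      \item \emph{Time piece.} This is $O((1+|Y_n|^q)\Delta)$ deterministically, by Assumption \ref{ass:fghtimeLipschitz}.
      \item \emph{Spatial piece $f(t_{n+1},\overline{Y}_{n+1})-f(t_{n+1},Y_n)$.} The Lipschitz bound only gives $O(\Delta^{1/2})$ here, because of the Brownian increment. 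Instead, use $f(t_{n+1},\cdot)\in C_p^4$ and Taylor-expand to first order around $Y_n$ with integral remainder.
\end{itemize}

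For the spatial piece, write $\overline{Y}_{n+1}-Y_n = f(t_n,Y_n)\Delta + G_n + H_n$.
\begin{itemize}
      \item \emph{Linear term.} Since $\E(G_n+H_n\mid\F_{t_n})=0$, the conditional mean is $\nabla f(t_{n+1},Y_n)\,f(t_n,Y_n)\Delta$. This is $O((1+|Y_n|^q)\Delta)$ by polynomial growth of $\nabla f$ and linear growth of $f$.
      \item \emph{Remainder.} It is bounded by $C(1+|Y_n|^q+|\overline{Y}_{n+1}|^q)|\overline{Y}_{n+1}-Y_n|^2$. Apply Cauchy--Schwarz and both estimates in \eqref{eq:barYestimate}, using $\E(|\overline{Y}_{n+1}-Y_n|^4\mid\F_{t_n})\le C\Delta(1+|Y_n|^q)$. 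This only gives $\Delta^{1/2}$, which is not enough.
\end{itemize}
To fix this, bound the quadratic term directly rather than through the fourth moment. Use $\E(|\overline{Y}_{n+1}-Y_n|^2\mid\F_{t_n})\le C\Delta(1+|Y_n|^q)$, and separate the polynomial weight via Hölder's inequality with exponents $(r,r')$ where $r'$ is close to $1$.

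I expect this step to be the main obstacle. Hölder with $r'>1$ loses a power: the quantity $\big(\E(|\overline{Y}_{n+1}-Y_n|^{2r'}\mid\F_{t_n})\big)^{1/r'}$ is only $O(\Delta^{1/r'})$, because the jump part makes every moment of the increment of order $\Delta$. The clean route is therefore to avoid Hölder altogether.
\begin{itemize}
      \item Bound the weight pointwise: $|\overline{Y}_{n+1}|^q\le C(|Y_n|^q+|\overline{Y}_{n+1}-Y_n|^q)$.
      \item This splits the remainder into $(1+|Y_n|^q)\,\E(|\overline{Y}_{n+1}-Y_n|^2\mid\F_{t_n})$ plus $\E(|\overline{Y}_{n+1}-Y_n|^{q+2}\mid\F_{t_n})$.
      \item Both are $O((1+|Y_n|^{q})\Delta)$ by \eqref{eq:barYestimate}.
\end{itemize}
Collecting the three pieces gives an $O((1+|Y_n|^q)\Delta)$ conditional mean. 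Multiplying by $\theta\Delta$ yields the first estimate in \eqref{eq:delta-mean-var}.
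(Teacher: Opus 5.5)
Your proof is correct, but it is organized differently from the paper's. The paper Taylor-expands $f(t_{n+1},\cdot)$ around $Y_n$ at the implicit point $Y_{n+1}$. It writes $\eta_{n+1}=\delta_{n+1}+(\overline{Y}_{n+1}-Y_n)$ and obtains a linear equation $\big(I-\theta\Delta\nabla f(t_{n+1},Y_n)\big)\delta_{n+1}=\cdots$. The inverse of this matrix is $\F_{t_n}$-measurable with norm at most $2$, so it can be pulled out of the conditional expectation. The remainder $R_n^f$ is then controlled through moments of the implicit increment $\eta_{n+1}=Y_{n+1}-Y_n$, which require their own absorption argument. The paper derives the moment bound on $\delta_{n+1}$ last, from those $\eta$-moments.

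You reverse the order. You first prove the moment bound by absorbing $\theta L\Delta|\delta_{n+1}|$ directly. In the mean bound you Taylor-expand only at the explicit point $\overline{Y}_{n+1}$. The implicit piece $f(t_{n+1},Y_{n+1})-f(t_{n+1},\overline{Y}_{n+1})$ you handle crudely via Lipschitz continuity and the $p=2$ moment bound, which gives $\Delta^{3/2}$, more than enough.

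Your route needs only the explicit-increment estimates \eqref{eq:barYestimate}, no matrix inversion, and no separate estimate for $\eta_{n+1}$. It also avoids needing the operator-norm bound $|\nabla f|\le L$. The paper's linearization gives a more explicit representation of $\delta_{n+1}$, which would be the natural starting point for a finer expansion, but it is not needed for the stated bounds.

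Your treatment of the quadratic remainder is the same device the paper uses in \eqref{eq:Rnfcond}: you split the polynomial weight pointwise instead of using H\"older, since jump moments of every order are only $O(\Delta)$. Both arguments also rely on $f(t,\cdot)$ being $C^2$ with derivatives of polynomial growth uniformly in $t$. That is not among the lemma's listed hypotheses, but it is the standing regularity assumption of the section.
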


\begin{proof}
      By \eqref{eq:Stmethod} and \eqref{eq:YEMfrozen}, we have
      \begin{align}\label{eq:deltan1f}
            \delta_{n+1} = \theta\big(f(t_{n+1},Y_{n+1}) - f(t_{n},Y_{n})\big)\Delta. 
      \end{align}      
      Letting $\eta_{n+1} := Y_{n+1}-Y_{n}$ and using the first-order Taylor formula with integral remainder for $x \mapsto f(t_{n+1},x)$ at $Y_n$, we have $f(t_{n+1},Y_{n+1}) = f(t_{n+1},Y_n) + \nabla f(t_{n+1},Y_n)\eta_{n+1} + R_{n}^{f}$ with
      $$R_{n}^{f} := \int_{0}^{1} (1-r) \nabla^{2} f\big(t_{n+1},
      Y_n+r\eta_{n+1}\big)[\eta_{n+1},\eta_{n+1}] \diff{r}.$$
      It follows from $\eta_{n+1} = \delta_{n+1} + \overline{Y}_{n+1}-Y_n$ and \eqref{eq:deltan1f} that
      \begin{align*}
            \big(I-\theta\Delta (\nabla f(t_{n+1},Y_{n}))\big)\delta_{n+1}
            =&~
            \theta\Delta\big(f(t_{n+1},Y_n)-f(t_n,Y_n)\big)
            \\&~+
            \theta\Delta (\nabla f(t_{n+1},Y_n))\big(\overline{Y}_{n+1}-Y_n\big)
            + \theta\Delta R_{n}^{f} .
      \end{align*}
      Since $f$ is globally Lipschitz in $x$ with Lipschitz constant $L$, we have $|\nabla f(t_{n+1},Y_n))| \leq L$. Together with $\theta\Delta L \leq 1/2$, we deduce that $I-\theta\Delta (\nabla f(t_{n+1},Y_n))$ is invertible and
      \begin{equation*}
            \big|\big(I-\theta\Delta (\nabla f(t_{n+1},Y_{n}))\big)^{-1}\big|
            \leq \frac1{1-\theta\Delta L} \le 2 .
      \end{equation*}
      Applying the $\F_{t_{n}}$-measurability of $\big(I-\theta\Delta (\nabla f(t_{n+1},Y_{n}))\big)^{-1}$ yields
      \begin{align*}
            \big|\E\big(\delta_{n+1} \mid \F_{t_n}\big)\big|
            =&~
            \big|\big(I-\theta\Delta (\nabla f(t_{n+1},Y_{n}))\big)^{-1} 
            \E\big(\big(\theta\Delta\big(f(t_{n+1},Y_n)-f(t_n,Y_n)\big)
            \\&~+
            \theta\Delta (\nabla f(t_{n+1},Y_n))\big(\overline{Y}_{n+1}-Y_n\big)
            + \theta\Delta R_{n}^{f}\big)
             \mid \F_{t_n}\big)\big|
            \\\leq&~
            \big|\big(I-\theta\Delta (\nabla f(t_{n+1},Y_{n}))\big)^{-1}\big|
            \big|\theta\Delta\big(f(t_{n+1},Y_n)-f(t_n,Y_n)\big)
            \\&~+
            \theta\Delta (\nabla f(t_{n+1},Y_n))
            \E\big( \overline{Y}_{n+1}-Y_n \mid \F_{t_n}\big)
            + \theta\Delta \E\big(R_{n}^{f} \mid \F_{t_n}\big)\big|
            \\\leq&~
            2\theta\Delta\big(\big|f(t_{n+1},Y_n)-f(t_n,Y_n)\big|
            + 
            \big|\E\big(R_{n}^{f} \mid \F_{t_n}\big)\big|
            \\&~+
            \big|\nabla f(t_{n+1},Y_n)\big|
            \big|\E\big( \overline{Y}_{n+1}-Y_n \mid \F_{t_n}\big)\big|\big)
      \end{align*}
      Owing to $\E(\Delta W_n \mid \F_{t_n})=0$ and $\E(H_{n} \mid \F_{t_n})=0$, we obtain $\E(\overline{Y}_{n+1}-Y_n\mid\F_{t_n})=f(t_n,Y_n)\Delta$. 
      Together with the time-Lipschitz continuity of $f$, i.e., $|f(t_{n+1},Y_n) 
      - f(t_n,Y_n)| \leq C(1+|Y_n|^q)\Delta$, we obtain
      \begin{equation}\label{eq:delta-mean-pre}
            \big|\E\big(\delta_{n+1} \mid \F_{t_n}\big)\big|
            \leq
             C(1+|Y_{n}|^{q})\Delta^{2} 
             + 
             C\Delta \E\big(|R_{n}^{f}| \mid \F_{t_n}\big).
      \end{equation}
      It remains to bound $\E(|R_n^{f}|\mid\F_{t_n})$. Noting that 
      \begin{align*}
            \eta_{n+1} 
            =&~
            f(t_{n},Y_{n})\Delta
            +
            g(t_{n},Y_{n})\Delta W_{n}  \notag
            +
            \int_{t_{n}}^{t_{n+1}}\int_{|z|<c}
            h(t_n,Y_n,z) \,\widetilde{N}(\dif{z},\dif{s})
            \\&~+
            \theta \big(f(t_{n+1},Y_{n})
            -  f(t_{n},Y_{n})\big) \Delta 
            +
            \theta \big(f(t_{n+1},Y_{n+1})
            -  f(t_{n+1},Y_{n})\big) \Delta,
      \end{align*}
      we apply the space-Lipschitz continuity of $f$ to get
      \begin{align*}
	        |\eta_{n+1}| 
            \leq&~
            \bigg|f(t_{n},Y_{n})\Delta
            +
            g(t_{n},Y_{n})\Delta W_{n}  \notag
            +
            \int_{t_{n}}^{t_{n+1}}\int_{|z|<c}
            h(t_n,Y_n,z) \,\widetilde{N}(\dif{z},\dif{s})
            \\&~+
	        \theta \big(f(t_{n+1},Y_{n})
            -  f(t_{n},Y_{n})\big) \Delta \bigg|
            +
	        L\theta\Delta |\eta_{n+1}|.
      \end{align*}
      Then $\theta\Delta L \leq 1/2$ and $\frac1{1-\theta\Delta L} \le 2$ ensure that
      \begin{align*}
	        |\eta_{n+1}| 
	        \leq&~
            C\bigg|f(t_{n},Y_{n})\Delta
            +
            g(t_{n},Y_{n})\Delta W_{n}  \notag
            +
            \int_{t_{n}}^{t_{n+1}}\int_{|z|<c}
            h(t_n,Y_n,z) \,\widetilde{N}(\dif{z},\dif{s})
            \\&~+
	        \theta \big(f(t_{n+1},Y_{n})
            -  f(t_{n},Y_{n})\big) \Delta \bigg|,
      \end{align*}
      and accordingly that for any $p \geq 2$,
      \begin{align}\label{eq:etan1pFtn}
            \E(|\eta_{n+1}|^{p} \mid \F_{t_n})
            \leq 
            C(1+|Y_n|^{q})\Delta.
      \end{align}
      Since $\nabla^{2} f$ has polynomial growth, there exists an integer $q \geq 1$ such that
      \begin{align*}
            |R_{n}^{f}|
            \leq&~
            \int_{0}^{1} (1-r) |\nabla^{2} f\big(t_{n+1},
            Y_n+r\eta_{n+1}\big)[\eta_{n+1},\eta_{n+1}]| \diff{r}
            \\\leq&~
            C\big(1+|Y_n|^q+|\eta_{n+1}|^q\big)|\eta_{n+1}|^{2},
      \end{align*}
      which in combination with \eqref{eq:etan1pFtn} gives
      \begin{align}\label{eq:Rnfcond}
            \E\big(|R_{n}^{f}| \mid \F_{t_n}\big)
            \leq 
            C(1+|Y_n|^q)\E\big(|\eta_{n+1}|^2 \mid \F_{t_n}\big)
            + C\E\big(|\eta_{n+1}|^{q+2} \mid \F_{t_n}\big)
            \leq
            C(1+|Y_n|^q)\Delta.
      \end{align}
      Substituting \eqref{eq:Rnfcond} into \eqref{eq:delta-mean-pre} yields the first estimate in \eqref{eq:delta-mean-var}.
      From \eqref{eq:deltan1f} and the space/time Lipschitz continuity of $f$, it holds that
      \begin{align*}
            |\delta_{n+1}| 
            \leq&~
            \theta\Delta\big(|f(t_{n+1},Y_{n+1}) - f(t_{n+1},Y_{n})|
            + |f(t_{n+1},Y_{n}) - f(t_{n},Y_{n})|\big)
            \\\leq&~
            C\Delta\big(|\eta_{n+1}|+(1+|Y_n|^q)\Delta\big),
      \end{align*}
      and hence $|\delta_{n+1}|^{p} \leq C\Delta^{p}|\eta_{n+1}|^{p} + C(1+|Y_n|^{pq})\Delta^{2p}$ for any $p \geq 2$. 
      Taking $\E(\cdot\mid\F_{t_n})$ and using \eqref{eq:etan1pFtn}, 
      we obtain the second estimate in \eqref{eq:delta-mean-var}.  
\end{proof}

Leveraging the rigorous moment control over the implicit correction term $\delta_{n+1}$, we can now bound its higher-order weak error contribution within a single-step conditional expectation.
\begin{proposition}\label{prop:STMminusEM}
      Suppose that Assumptions \ref{ass:globallinear}, \ref{As:fghgrowthcond}, \ref{ass:fghtimeLipschitz} hold, and let $\theta{L}\Delta \leq \frac{1}{2}$ with $\theta \in [0,1]$. Then there exists a constant $C > 0$ and an integer $q \geq 1$ such that for any $n \in \N$,
      \begin{align*}
            \big|\E\big( u(t_{n+1},Y_{n+1})
            - u(t_{n+1},\overline{Y}_{n+1}) \mid \F_{t_{n}}\big)\big|
            \leq 
            C\big(1 + |Y_{n}|^{q}\big)\Delta^{2}.
      \end{align*}
\end{proposition}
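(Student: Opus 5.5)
We expand $x\mapsto \varphi_{n}(x):=u(t_{n+1},x)$ around $\overline{Y}_{n+1}$ to second order with integral remainder, using $Y_{n+1}=\overline{Y}_{n+1}+\delta_{n+1}$:
\begin{align*}
      \varphi_{n}(Y_{n+1})-\varphi_{n}(\overline{Y}_{n+1})
      =
      \big\langle \nabla\varphi_{n}(\overline{Y}_{n+1}),\delta_{n+1}\big\rangle
      +
      \int_{0}^{1}(1-r)\nabla^{2}\varphi_{n}(\overline{Y}_{n+1}+r\delta_{n+1})[\delta_{n+1},\delta_{n+1}]\diff{r}.
\end{align*}
The second-order term is handled directly. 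By the polynomial growth of $\nabla^{2}\varphi_{n}$ (recall $u(t_{n+1},\cdot)\in C_{p}^{4}$), the Hölder inequality, \eqref{eq:barYestimate} and the second estimate in \eqref{eq:delta-mean-var} with $p=4$, its conditional expectation is bounded by
\[
C\big(\E(1+|\overline{Y}_{n+1}|^{2q}+|\delta_{n+1}|^{2q}\mid\F_{t_n})\big)^{1/2}\big(\E(|\delta_{n+1}|^{4}\mid\F_{t_n})\big)^{1/2}\le C(1+|Y_n|^q)\Delta^{5/2},
\]
which is even better than needed. Using $p=2$ with a crude sup bound instead would only give $\Delta^{3}$ in $\E|\delta|^2$ up to the growth factor, which is also fine.

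For the first-order term we cannot use $\E(\delta_{n+1}\mid\F_{t_n})$ directly, because $\nabla\varphi_{n}(\overline{Y}_{n+1})$ is not $\F_{t_n}$-measurable. We therefore split
\[
\nabla\varphi_{n}(\overline{Y}_{n+1})=\nabla\varphi_{n}(Y_n)+\big(\nabla\varphi_{n}(\overline{Y}_{n+1})-\nabla\varphi_{n}(Y_n)\big).
\]
The frozen part gives $\langle\nabla\varphi_{n}(Y_n),\E(\delta_{n+1}\mid\F_{t_n})\rangle$, bounded by $C(1+|Y_n|^q)\Delta^{2}$ through the polynomial growth of $\nabla\varphi_n$ and the first estimate in \eqref{eq:delta-mean-var}. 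The difference part is controlled by the mean value theorem, $|\nabla\varphi_{n}(\overline{Y}_{n+1})-\nabla\varphi_{n}(Y_n)|\le C(1+|Y_n|^q+|\overline{Y}_{n+1}|^q)|\overline{Y}_{n+1}-Y_n|$, followed by Hölder with three factors. This requires $\E(|\overline{Y}_{n+1}-Y_n|^{p}\mid\F_{t_n})^{1/p}\le C(1+|Y_n|^q)\Delta^{1/p}$ from \eqref{eq:barYestimate} and $\E(|\delta_{n+1}|^{p'}\mid\F_{t_n})^{1/p'}\le C(1+|Y_n|^q)\Delta^{1+1/p'}$ from \eqref{eq:delta-mean-var}.

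This cross term is the main obstacle. Because of the jumps, $|\overline{Y}_{n+1}-Y_n|$ is only $O(\Delta^{1/p})$ in $L^p$, not $O(\Delta^{1/2})$, so a naive Hölder split with $p=p'=2$ yields $\Delta^{1/2}\cdot\Delta^{3/2}=\Delta^{2}$. That happens to suffice: with exponents $(2,2)$ plus a bounded growth factor, one gets $\Delta^{1/2}\Delta^{3/2}=\Delta^{2}$. If the extra growth factor forces a three-way Hölder with exponents such as $(4,4,2)$, the rate degrades to $\Delta^{1/4}\Delta^{3/2}$. To avoid this, I would absorb the growth factor into $\Delta$-free moments. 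Specifically, I would use Hölder as $\E(A|B||\delta|)\le \E(A^2B^2)^{1/2}\E(|\delta|^2)^{1/2}$, where $A=1+|Y_n|^q+|\overline{Y}_{n+1}|^q$ and $B=|\overline{Y}_{n+1}-Y_n|$. Then I bound $\E(A^2B^2\mid\F_{t_n})\le C(1+|Y_n|^{q})\Delta$ by a second Hölder step, noting that \eqref{eq:barYestimate} gives the factor $\Delta$ for every moment order $p$. Taking $\E(B^4)^{1/2}\le (C\Delta)^{1/2}$ would only give $\Delta^{1/2}$ here, so instead I would write $A^2B^2\le C(1+|Y_n|^{2q})B^2+CB^{2q+2}$ (using $|\overline{Y}_{n+1}|\le|Y_n|+B$); each piece then has conditional expectation $\le C(1+|Y_n|^q)\Delta$. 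Combining with $\E(|\delta_{n+1}|^2\mid\F_{t_n})^{1/2}\le C(1+|Y_n|^q)\Delta^{3/2}$ gives the cross term bounded by $C(1+|Y_n|^q)\Delta^{2}$, and summing the three contributions proves Proposition \ref{prop:STMminusEM}.
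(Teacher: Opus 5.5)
Your proof is correct and follows the paper's route: you freeze $\nabla\varphi_n$ at $Y_n$ and use the first bound in \eqref{eq:delta-mean-var}, and you control the gradient increment by a Hessian bound with a $(2,2)$ H\"older split that keeps the polynomial weight together with $\overline{Y}_{n+1}-Y_n$, so the increment contributes a full factor $\Delta$ and the cross term is $\Delta^{1/2}\cdot\Delta^{3/2}$; the terms quadratic in $\delta_{n+1}$ give $\Delta^{5/2}$. The only cosmetic difference is that you stop the Taylor expansion at second order with an integral remainder, which merges the paper's separate quadratic term and cubic remainder into one estimate and avoids $\nabla^{3}\varphi_n$.
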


\begin{proof}
      Since $x \mapsto u(t_{n+1},x) =: \varphi_{n}(x)$ belongs to $C_{p}^{4}(\R^{d};\R)$, using the Taylor formula twice yields
      \begin{align*}
            &~u(t_{n+1},Y_{n+1})
            - u(t_{n+1},\overline{Y}_{n+1})
            \\=&~
            \big\langle \nabla\varphi_{n}(\overline{Y}_{n+1}), \delta_{n+1}\big\rangle
            +
            \frac{1}{2} \delta_{n+1}^{\top}
            \nabla^{2}\varphi_{n}(\overline{Y}_{n+1})\delta_{n+1} 
            + R_{n+1}
            \\=&~
            \big\langle \nabla\varphi_{n}(Y_{n}), \delta_{n+1}\big\rangle
            +
            \big\langle \nabla^{2}\varphi_{n}(\xi_{n+1})(\overline{Y}_{n+1}-Y_{n}), 
            \delta_{n+1}\big\rangle
            +
            \frac{1}{2} \delta_{n+1}^{\top}
            \nabla^{2}\varphi_{n}(\overline{Y}_{n+1})\delta_{n+1} 
            + R_{n+1}
      \end{align*}
      with $\xi_{n+1} :=  Y_{n} + \kappa (\overline{Y}_{n+1}-Y_{n})$ for some $\kappa \in [0,1]$ and
      $$R_{n+1} = \frac{1}{2} \int_{0}^{1} (1-r)
        \nabla^{3}\varphi_{n}(\overline{Y}_{n+1} + r\delta_{n+1})
        [\delta_{n+1},\delta_{n+1},\delta_{n+1}] \diff{r}.$$
      It follows that
      \begin{align}\label{eq:EutYbarY}
            &~\big|\E\big(u(t_{n+1},Y_{n+1})
            - u(t_{n+1},\overline{Y}_{n+1})
            \,|\, \F_{t_{n}}\big)\big| \notag
            \\\leq&~
            \big|\E\big(\big\langle \nabla\varphi_{n}(Y_{n}), 
            \delta_{n+1}\big\rangle  \mid \F_{t_{n}}\big)\big|
            + 
            \big|\E\big(\big\langle \nabla^{2}\varphi_{n}(\xi_{n+1})
            (\overline{Y}_{n+1}-Y_{n}), 
            \delta_{n+1}\big\rangle  \mid \F_{t_{n}}\big)\big|  \notag
            \\&~+
            \frac{1}{2} \big|\E\big( \delta_{n+1}^{\top}
            \nabla^{2}\varphi_{n}(\overline{Y}_{n+1})\delta_{n+1} 
             \,|\, \F_{t_{n}}\big)\big|
            +
            \big|\E\big(R_{n+1} \mid \F_{t_{n}}\big)\big|.
      \end{align}    
      For $\big|\E\big(\big\langle \nabla\varphi_{n}(Y_{n}), 
            \delta_{n+1}\big\rangle  \mid \F_{t_{n}}\big)\big|$, 
      we utilize \eqref{eq:delta-mean-var} and the polynomial growth of $\nabla\varphi_{n}$ to derive
      \begin{align}\label{eq:nablavarnYn}
            \big|\E\big(\big\langle \nabla\varphi_{n}(Y_{n}), 
            \delta_{n+1}\big\rangle  \mid \F_{t_{n}}\big)\big| 
            \leq
            \big|\nabla\varphi_{n}(Y_{n}) \big| 
            \big|\E\big(\delta_{n+1}  \mid \F_{t_{n}} \big)\big|
            \leq
            C(1+|Y_n|^q)\Delta^2.
      \end{align}
      Concerning $\big|\E\big(\big\langle \nabla^{2}\varphi_{n}(\xi_{n+1})
            (\overline{Y}_{n+1}-Y_{n}), 
            \delta_{n+1}\big\rangle  \mid \F_{t_{n}}\big)\big|$,
      application of the polynomial growth of $\nabla^{2}\varphi_{n}$, 
      the H\"{o}lder inequality, \eqref{eq:barYestimate}, 
      and \eqref{eq:delta-mean-var} enables us to get
      \begin{align}\label{eq:babla2varnxin}
            &~\big|\E\big(\big\langle \nabla^{2}\varphi_{n}(\xi_{n+1})
            (\overline{Y}_{n+1}-Y_{n}), 
            \delta_{n+1}\big\rangle  \mid \F_{t_{n}}\big)\big| \notag
            \\\leq&~
            \E\big(|\nabla^{2}\varphi_{n}(\xi_{n+1})|\,
            |\overline{Y}_{n+1} - Y_{n}|\,
            |\delta_{n+1}|  \mid \F_{t_{n}}\big) \notag
            \\\leq&~
            C\E\big((1 + |Y_{n}|^{q} + |\overline{Y}_{n+1}-Y_{n}|^{q})\,
            |\overline{Y}_{n+1} - Y_{n}|\,
            |\delta_{n+1}|  \mid \F_{t_{n}}\big) \notag
            \\\leq&~
            C\big(\E\big((1 + |Y_{n}|^{q} + |\overline{Y}_{n+1}-Y_{n}|^{q})^{2}\,
            |\overline{Y}_{n+1} - Y_{n}|^{2}
            \mid \F_{t_{n}}\big)\big)^{\frac{1}{2}}
            \big(\E\big(|\delta_{n+1}|^{2}  
            \mid \F_{t_{n}}\big)\big)^{\frac{1}{2}} \notag
            \\\leq&~
            C(1+|Y_n|^{q})\Delta^{\frac{3}{2}}
            \big((1 + |Y_{n}|^{2q})
            \E\big(|\overline{Y}_{n+1} - Y_{n}|^{2}
            \mid \F_{t_{n}}\big)
            +
            \E\big(|\overline{Y}_{n+1}-Y_{n}|^{2q+2}
            \mid \F_{t_{n}}\big)\big)^{\frac{1}{2}} \notag
            \\\leq&~
            C(1+|Y_n|^{q})\Delta^{2}.
      \end{align}
      As to $\frac{1}{2} \big|\E\big( \delta_{n+1}^{\top}
      \nabla^{2}\varphi_{n}(\overline{Y}_{n+1})\delta_{n+1} \mid \F_{t_{n}}\big)\big|$,
      one can use the polynomial growth of $\nabla^{2}\varphi_{n}$, \eqref{eq:barYestimate}, and 
      \eqref{eq:delta-mean-var} to deduce that
      \begin{align}\label{eq:deltanbarY}
            &~\frac{1}{2} \big|\E\big( \delta_{n+1}^{\top}\nabla^{2}
            \varphi_{n}(\overline{Y}_{n+1})\delta_{n+1} \mid \F_{t_{n}}\big)\big| \notag
            \\\leq&~
            C \E\big( |\delta_{n+1}|^{2} 
            |\nabla^{2} \varphi_{n}(\overline{Y}_{n+1})| \mid \F_{t_{n}}\big)  \notag
            \\\leq&~
            C \big(\E\big( |\delta_{n+1}|^{4} 
            \mid \F_{t_{n}}\big)\big)^\frac{1}{2}
            \big(\E\big( (1 + |\overline{Y}_{n+1}|^{q})^{2}
            \mid \F_{t_{n}}\big)\big)^\frac{1}{2} \notag
            \\\leq&~
            C\Delta^{\frac{5}{2}}(1 + |Y_{n}|^{q}).
      \end{align}
      For $\big|\E\big(R_{n+1} \mid \F_{t_{n}}\big)\big|$, 
      employing the polynomial growth of $\nabla^{3}\varphi_{n}$,
      \eqref{eq:barYestimate}, and \eqref{eq:delta-mean-var} leads to
      \begin{align}\label{eq:ERFtn}
            \big|\E\big(R_{n+1} \mid \F_{t_{n}}\big)\big|
            \leq&~
            \E\bigg(\frac{1}{2} \int_{0}^{1} (1-r)
            |\nabla^{3}\varphi_{n}(\overline{Y}_{n+1} + r\delta_{n+1})|
            |\delta_{n+1}|^{3} \diff{r} \mid \F_{t_{n}}\bigg) \notag
            \\\leq&~
            C\E\big((1 + |\overline{Y}_{n+1}|^{q} + |\delta_{n+1})|^{q})
            |\delta_{n+1}|^{3} \mid \F_{t_{n}}\big) \notag
            \\\leq&~
            C\big(\E\big((1 + |\overline{Y}_{n+1}|^{q})^{2}
            \mid \F_{t_{n}}\big)\big)^{\frac{1}{2}}
            \big(\E\big(|\delta_{n+1}|^{6} 
            \mid \F_{t_{n}}\big)\big)^{\frac{1}{2}} \notag
            +
            C\E\big(|\delta_{n+1}|^{q+3} \mid \F_{t_{n}}\big) \notag
            \\\leq&~
            C\Delta^{\frac{7}{2}}(1 + |Y_{n}|^{q}).
      \end{align}
      Putting \eqref{eq:nablavarnYn}, \eqref{eq:babla2varnxin}, 
      \eqref{eq:deltanbarY}, and \eqref{eq:ERFtn} into \eqref{eq:EutYbarY}
       finishes the proof. 
\end{proof}

\subsection{Weak convergence order of stochastic theta method}
With the aid of the moment bounds established in Proposition \ref{prop:EYnpestimate} to control the polynomial growth of the remainder terms, we now combine the weak consistency of the frozen Euler step in Proposition \ref{prop:EMerror} and the higher-order weak contribution of the implicit correction in Proposition \ref{prop:STMminusEM} via a global telescoping argument. This allows us to establish the overall weak convergence order of the stochastic theta method for the non-time-changed SDE.

\begin{proposition}\label{prop:levyweakorder}
      Assume that all components of $f(t,\cdot), g(t,\cdot), h(t,\cdot,z)$ belong to $C_{p}^{4}(\R^{d};\R)$ for any $t \geq 0$ and $z \in \R^{d} \backslash \{0\}$. Suppose that Assumptions \ref{ass:globallinear}, \ref{As:fghgrowthcond}, \ref{ass:fghtimeLipschitz} hold, and let $\theta{L}\Delta \leq \frac{1}{2}$ with $\theta \in [0,1]$. Then there exists a constant $C > 0$ such that for any $N \in \N$ and $T = N\Delta$,
      \begin{align}
            \big|\E\big[\Phi(Y_{N})\big] - \E\big[\Phi(Y(T))\big]\big| 
            \leq 
            C\Delta e^{CT}.
      \end{align}
\end{proposition}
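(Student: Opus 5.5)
The plan is a global telescoping argument built on the backward Kolmogorov function with terminal time $T$. Fix $T = N\Delta$ and let $u(s,x) := \E[\Phi(Y(T)) \mid Y(s) = x]$ for $s \in [0,T]$, which solves \eqref{eq:Kbpide} with $t = T$. Since $u(T,\cdot) = \Phi$ and $u(0,x_{0}) = \E[\Phi(Y(T))]$ (with $Y_{0} = x_{0}$ deterministic), we have
\begin{align*}
      \E\big[\Phi(Y_{N})\big] - \E\big[\Phi(Y(T))\big]
      = \E\big[u(t_{N},Y_{N})\big] - u(t_{0},Y_{0})
      = \sum_{n=0}^{N-1} \E\big[u(t_{n+1},Y_{n+1}) - u(t_{n},Y_{n})\big].
\end{align*}
For each $n$, condition on $\F_{t_n}$ and split the one-step difference as
\begin{align*}
      u(t_{n+1},Y_{n+1}) - u(t_{n},Y_{n})
      = \big(u(t_{n+1},Y_{n+1}) - u(t_{n+1},\overline{Y}_{n+1})\big)
      + \big(u(t_{n+1},\overline{Y}_{n+1}) - u(t_{n},Y_{n})\big).
\end{align*}
Proposition \ref{prop:STMminusEM} bounds the conditional expectation of the first bracket by $C(1+|Y_n|^q)\Delta^2$, and Proposition \ref{prop:EMerror} gives the same bound for the second.

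Taking expectations with the tower property, summing, and applying Proposition \ref{prop:EYnpestimate} yields
\begin{align*}
      \big|\E[\Phi(Y_{N})] - \E[\Phi(Y(T))]\big|
      \leq C\Delta^{2}\sum_{n=0}^{N-1}\big(1 + \E[|Y_{n}|^{q}]\big)
      \leq C\Delta^{2}\sum_{n=0}^{N-1} e^{Ct_{n}}
      \leq C\Delta\, T e^{CT} \leq C\Delta e^{CT},
\end{align*}
after absorbing $T \leq e^{T}$ into the exponential by enlarging $C$. If $q<2$, we simply raise it, since the moment bound holds for every $p \geq 2$.

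The main obstacle is the uniformity of the constants. The constants $C$ and $q$ in Propositions \ref{prop:EMerror} and \ref{prop:STMminusEM} come from the polynomial growth of $\nabla^{k}u(t_{n+1},\cdot)$ for $k\le 4$ and from the time-Lipschitz bound \eqref{eq:utimelips}. These depend on the horizon $T$, because $u$ is defined through $\Phi(Y(T))$. The one-step results as stated hide this dependence, so I need $C(1+|x|^{q})$ to hold with $C \leq C_{0}e^{C_{0}T}$ and $q$ independent of $T$. To get this, I would go back to the representation of the derivatives of $u$ via the first through fourth variation processes of \eqref{eq:SDE}. Under Assumption \ref{ass:globallinear} and the $C_{p}^{4}$ regularity, Gronwall-type moment estimates of the kind in Lemma \ref{lm:Ytpexactbound} bound these processes by $Ce^{C(T-s)}(1+|x|^{q})$.

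With this exponential dependence made explicit, the sum becomes
\begin{align*}
      C\Delta^{2}\sum_{n} e^{CT}e^{Ct_n} \leq C\Delta e^{C'T},
\end{align*}
which is still of the claimed form. I therefore expect the step requiring the most care to be checking that every $C$ in the previous subsections grows at most exponentially in $T$, with the exponent $q$ fixed. The telescoping itself is routine.
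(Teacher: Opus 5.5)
Your proposal is correct and is essentially the paper's proof. It uses the same telescoping of $\E[u(t_{n},Y_{n})]$ with terminal time $T=N\Delta$, the same split through the frozen Euler step $\overline{Y}_{n+1}$ via Propositions \ref{prop:STMminusEM} and \ref{prop:EMerror}, and the same summation with Proposition \ref{prop:EYnpestimate}. You also point out that the one-step constants depend on $T$ through the derivative bounds of $u$, and must grow at most exponentially in $T$ with $q$ fixed; the paper leaves this implicit, and your variation-process sketch is a reasonable way to make it rigorous.
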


\begin{proof}
      Applying the It\^{o} formula and using \eqref{eq:SDE} and \eqref{eq:Kbpide}, one has 
      \begin{align*}
			\diff{u(s,Y(s))}
			=&~
			\big\langle (\nabla u(s,Y(s))),
            g(s,Y(s)) \diff{W(s)} \big\rangle
			\\&~+ 
			\int_{|z|<c}u\big(s,Y(s)+h(s,Y(s),z)\big)
			- u(s,Y(s))\,\widetilde{N}(\dif{z},\dif{s}),
            \quad s \in (0,T],
      \end{align*}
      which implies $\E[u(0,Y(0))] = \E[u(s,Y(s))] = \E[\Phi(Y(s))]$ for $s \in (0,T]$. It follows from $u(t_{N},Y_{N}) = \Phi(Y_{N})$ and $u(0,Y(0)) = \E\big[\Phi(Y(T))\big]$ that
      \begin{align*}
            \E\big[\Phi(Y_{N})\big] - \E\big[\Phi(Y(T))\big] 
            =&~
            \sum_{n=0}^{N-1} \E\big[u(t_{n+1},Y_{n+1})\big]
            - \E\big[u(t_{n},Y_{n})\big]
            \\=&~
            \sum_{n=0}^{N-1}\Big(
            \E\big[ \E\big( u(t_{n+1},Y_{n+1})
            - u(t_{n+1},\overline{Y}_{n+1}) \,|\, \F_{t_{n}}\big) \big] 
            \\&~+
            \E\big[ \E\big( u(t_{n+1},\overline{Y}_{n+1})
            - u(t_{n},Y_{n}) \,|\, \F_{t_{n}}\big) \big]\Big).          
      \end{align*} 
      Propositions \ref{prop:EYnpestimate}, \ref{prop:STMminusEM}, and \ref{prop:EMerror} indicate 
      \begin{align*}
            \big|\E\big[\Phi(Y_{N})\big] - \E\big[\Phi(Y(T))\big]\big| 
            \leq&~
            \sum_{n=0}^{N-1}\Big(
            \E\big[\big|\E\big( u(t_{n+1},Y_{n+1})
            - u(t_{n+1},\overline{Y}_{n+1}) \,|\, \F_{t_{n}}\big)\big|\big] 
            \\&~+
            \E\big[\big|\E\big( u(t_{n+1},\overline{Y}_{n+1})
            - u(t_{n},Y_{n}) \,|\, \F_{t_{n}}\big)\big|\big]\Big) 
            \\\leq&~
            C\Delta^{2}\sum_{n=0}^{N-1}
            \big(1 + \E\big[|Y_{n}|^{q}\big]\big)  
            \\\leq&~
            C\Delta^{2}\sum_{n=0}^{N-1}
            \big(1 + Ce^{Ct_{n}}\big) 
            \\\leq&~
            CN\Delta^{2}
            \big(1 + Ce^{CT}\big)     
            \\\leq&~
            C\Delta e^{CT},   
      \end{align*}
      and thus complete the proof.
\end{proof}

Now we turn to approximate $\{E(t)\}_{t \in [0,T]}$ via $\{E_{\Delta}(t)\}_{t \in [0,T]}$, which is defined by
\begin{equation}\label{eq:EDeltatdef}
      E_{\Delta}(t)
      :=
      (\min\{n \in \N : D(t_n)>t\} - 1)\Delta,
      \quad t \in [0,T].
\end{equation}
Then each sample path of stochastic process $\{E_{\Delta}(t)\}_{t \in [0,T]}$ is a non-decreasing step function with constant jump size $\Delta$, and the procedure is stopped when $T \in [D(t_{N}), D(t_{N+1}))$ for some $N \in \N$. It follows that $E_\Delta(T) = t_{N}$ and 
\begin{equation}\label{eq:EDeltattn}
      E_{\Delta}(t)
      =
      t_{n}, \quad t \in [D(t_{n}),D(t_{n+1})),
      n = 0, 1,\cdots, N.
\end{equation}
Moreover, Lemma 3.5 in \cite{jum2016strong} ensures that $\{E(t)\}_{t \in [0,T]}$ can be well approximated by $\{E_{\Delta}(t)\}_{t \in [0,T]}$ in the sense that 
\begin{equation}\label{eq:EtEDeltaterror}
      E(t)-\Delta 
      \leq
      E_{\Delta}(t)\leq E(t), 
      \quad t \in [0,T].
\end{equation}
Recalling the fact that the solution $\{X(t)\}_{t \in [0,T]}$ to \eqref{eq:TCSDE} can be expressed as $X(t) = Y(E(t)), t \in [0,T]$ with $\{Y(t)\}_{t \geq 0}$ denoting the solution to \eqref{eq:SDE}  by Lemma \ref{lem:dualityprinciple}. Hence, it is reasonable to  approximate $\{X(t)\}_{t \in [0,T]}$ by $\{X_{\Delta}(t)\}_{t \in [0,T]}$ as follows
\begin{equation}\label{eq:XDeltaYDelta}
      X_{\Delta}(t)
      :=
      Y_{\Delta}(E_\Delta(t)), \quad t \in [0,T],
\end{equation}
where the piecewise continuous process $\{Y_{\Delta}(t)\}_{t \geq 0}$ is defined by
\begin{equation}\label{eq:YDeltaYn}
      Y_{\Delta}(t) = Y_{n}, 
      \quad t \in [t_{n},t_{n+1}), n = 0,1,2,\cdots.
\end{equation}

Finally, by integrating the weak convergence result of the non-time-changed equation with the numerical approximation error of inverse subordinator, we arrive at the overall weak convergence order of the numerical approximation process $\{X_{\Delta}(t)\}_{t \in [0,T]}$ for the time-changed SDE.
				
\begin{theorem}\label{th:weakorder}
      Assume that all components of $f(t,\cdot), g(t,\cdot), h(t,\cdot,z)$ belong to $C_{p}^{4}(\R^{d};\R)$ for any $t \geq 0$ and $z \in \R^{d} \backslash \{0\}$. Suppose that Assumptions \ref{ass:globallinear}, 
      \ref{ass:techniquecond}, \ref{As:fghgrowthcond}, \ref{ass:fghtimeLipschitz} hold, and let $\theta{L}\Delta \leq \frac{1}{2}$ with $\theta \in [0,1]$. Then there exists $C > 0$, independent of $\Delta$, such that for any $\Phi \in C_{p}^{4}(\R^{d};\R)$,
      \begin{align*}
			\big|\E\big[\Phi(X(T))\big]
            - \E\big[\Phi(X_\Delta(T))\big]\big|
			\leq
			C\Delta.    
      \end{align*}
\end{theorem}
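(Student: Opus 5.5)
We split the error via the duality principle and condition on the subordinator. By Lemma \ref{lem:dualityprinciple}, $X(T)=Y(E(T))$, and by \eqref{eq:XDeltaYDelta}, \eqref{eq:YDeltaYn} and \eqref{eq:EDeltattn}, $X_\Delta(T)=Y_\Delta(E_\Delta(T))=Y_{N}$ with $N=E_\Delta(T)/\Delta$ random. We write
\begin{align*}
\E\big[\Phi(X(T))\big]-\E\big[\Phi(X_\Delta(T))\big]
=&~\E\big[\Phi(Y(E(T)))-\Phi(Y(E_\Delta(T)))\big]
\\&~+\E\big[\Phi(Y(E_\Delta(T)))-\Phi(Y_{E_\Delta(T)/\Delta})\big]=:I_1+I_2.
\end{align*}
The key structural fact is that $D$ (hence $E$ and $E_\Delta$) is independent of $(W,N)$, so $Y$ and $\{Y_n\}$ are independent of $(E(T),E_\Delta(T))$. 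Therefore each term can be computed by first conditioning on $\sigma(D(s):s\ge0)$, i.e. freezing the deterministic values $E(T)=\tau$, $E_\Delta(T)=t_n$, and then integrating against their joint law.

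For $I_2$, conditioning gives
\[
|I_2|\le\E\Big[\big|\E[\Phi(Y(t))]-\E[\Phi(Y_n)]\big|\big|_{t=t_n=E_\Delta(T)}\Big].
\]
By Proposition \ref{prop:levyweakorder} applied with horizon $t_n$ (its constant is independent of the horizon apart from the factor $e^{Ct_n}$), this is at most $C\Delta\,\E[e^{CE_\Delta(T)}]\le C\Delta\,\E[e^{CE(T)}]$, using \eqref{eq:EtEDeltaterror}. Then \eqref{eq:estimateEelambdaEt} gives $|I_2|\le C\Delta e^{CT}=C\Delta$ for fixed $T$.

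For $I_1$, conditioning similarly reduces it to $\E[\Phi(Y(t))-\Phi(Y(s))]$ with $s=E_\Delta(T)\le t=E(T)$. The second part of Lemma \ref{lm:Ytpexactbound} bounds this by $C(t-s)e^{Ct}\le C\Delta e^{CE(T)}$, using $E(T)-E_\Delta(T)\le\Delta$ from \eqref{eq:EtEDeltaterror}. Taking expectations and invoking \eqref{eq:estimateEelambdaEt} again yields $|I_1|\le C\Delta$. Summing the two bounds proves the theorem.

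The main obstacle is justifying the conditioning step rigorously. We have to evaluate $\E[\Phi(Y(E(T)))]$ as $\int \E[\Phi(Y(\tau))]\,\P_{E(T)}(\dif\tau)$. The same applies to the pair $(E(T),E_\Delta(T))$ and to the numerical chain evaluated at a random index. We would do this by a freezing lemma for independent $\sigma$-algebras, as in \cite[Lemma 9.2, Chapter 2]{mao2008stochastic} and in the Euler--Maruyama analysis of \cite{jum2016strong}. Two points must be checked for it to apply: measurability of $(t,\omega)\mapsto\Phi(Y(t,\omega))$, which follows from the c\`adl\`ag paths, and integrability, which follows from Lemma \ref{lm:Ytpexactbound}, Proposition \ref{prop:EYnpestimate} and \eqref{eq:estimateEelambdaEt}. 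A second point to verify is that the constants in Proposition \ref{prop:levyweakorder} and Lemma \ref{lm:Ytpexactbound} are uniform in the terminal time except through $e^{Ct}$. This is what allows us to integrate against the exponential moments of $E(T)$.
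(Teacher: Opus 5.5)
Your proposal is correct and follows the paper's proof. You use the same splitting into $\E[\Phi(Y(E(T)))-\Phi(Y(E_\Delta(T)))]$ and $\E[\Phi(Y(E_\Delta(T)))-\Phi(Y_\Delta(E_\Delta(T)))]$, and bound them respectively by Lemma \ref{lm:Ytpexactbound} and Proposition \ref{prop:levyweakorder}, together with \eqref{eq:EtEDeltaterror} and \eqref{eq:estimateEelambdaEt}. Your explicit conditioning on the subordinator, via the independence of $D$ from $(W,N)$, simply makes rigorous the step the paper takes implicitly when it writes the bound $\Delta C\E[e^{CE(T)}]$.
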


\begin{proof}
      By \eqref{eq:XDeltaYDelta} and Lemma \ref{lem:dualityprinciple}, we have
      \begin{align*}
			&~\big|\E\big[\Phi(X(T))\big]
		    - \E\big[\Phi(X_\Delta(T))\big]\big|
			=
			\big|\E\big[\Phi(Y(E(T))
			- \Phi(Y_\Delta(E_\Delta (T))\big]\big|
			\\\leq&~
			\big|\E\big[\Phi(Y(E(T)) - \Phi(Y(E_\Delta (T))\big]\big|
			+
			\big|\E\big[\Phi(Y(E_\Delta (T)) - \Phi(Y_\Delta(E_\Delta (T))\big]\big|.    
      \end{align*}
      Applying \eqref{eq:estimateEelambdaEt}, \eqref{eq:EtEDeltaterror} and Lemma \ref{lm:Ytpexactbound} leads to
      \begin{align*}
			\big|\E\big[\Phi(Y(E(T)) - \Phi(Y(E_\Delta (T))\big]\big|
			\leq
			\Delta C\E\big[e^{CE(T)}\big]
			\leq 
			\Delta Ce^{CT}.
      \end{align*}
      Owing to $E_\Delta(T) = t_{N}$, \eqref{eq:estimateEelambdaEt}, \eqref{eq:EtEDeltaterror} and Proposition \ref{prop:levyweakorder}, one gets
      \begin{align*}
			\big|\E\big[\Phi(Y(E_{\Delta}(T))
			- \Phi(Y_{\Delta}(E_{\Delta}(T))\big]\big|
			\leq
			\Delta C\E\big[e^{CE_{\Delta}(T)}\big]
			\leq
			\Delta C\E\big[e^{CE(T)}\big]
			\leq 
			\Delta Ce^{CT}.
	  \end{align*}
	  As a consequence, we obtain the desired result, and complete the proof.
\end{proof}

\section{Numerical experiments}\label{sect:numexp}			
This section presents two concrete examples to illustrate the theoretical results established above. As the general principles and implementation details for numerical experiments on stochastic differential equations driven by time-changed Lévy noise have already been presented in detail in \cite[Section 5]{chen2026strong}, we omit these standard ingredients here and focus instead on the numerical verification of the weak convergence order established in this paper. In all the experiments below, the nonlinear equations generated by the implicit discretization at each time step are solved by Newton–Raphson iterations with tolerance $10^{-5}$, while the expectations are approximated by Monte Carlo simulations with $20000$ sample paths.
\begin{example}\label{ex:onedimension}
\upshape 
      Consider the Ornstein--Uhlenbeck process driven by time-changed L\'{e}vy noise
      \begin{align}\label{eq:onedimensional}
            \diff{X(t)}
            =
            a_{1}(a_{2}-X(t-))\diff{E(t)}
            +
            a_{3}\diff{W(E(t))}
            +
            \int_{|z| < 1} a_{4} z
            \,\widetilde{N}(\dif{z},\dif{E(t)}),
            \quad t\in(0,T]      
      \end{align}
      with $X(0) = 0.5$, where $\{W(t)\}_{t \geq 0}$ is a $\R$-valued standard Brownian motion on the complete filtered probability space $(\Omega,\F,\P,\{\F_{t}\}_{t \geq 0})$. Let $\widetilde{N}(\dif{z},\dif{t}) := N(\dif{z},\dif{t}) - \mu(\dif{z})\dif{t}$ be the compensator of $\{\F_{t}\}_{t \geq 0}$-adapted Poisson random measure $N(\dif{z},\dif{t})$, where $\mu$ is a L\'{e}vy measure with $\mu(\dif{z}) = \frac{3}{2\sqrt{2\pi}} e^{-\frac{z^{2}}{2}}\dif{z}, z \in (-1,1)$ \cite{wang2016numerical}. Also, let $\{D(t)\}_{t \geq 0}$ be a $\alpha$-stable subordinator with $\alpha = 0.8 \in (0,1)$ and let $E(t) := \inf\big\{s \geq 0 : D(s) > t \big\}, t \geq 0$ be its inverse. We always assume that $\{W(t)\}_{t \geq 0}$, $\{N(t,\cdot)\}_{t \geq 0}$ and $\{D(t)\}_{t \geq 0}$ are mutually independent. In the numerical experiments, we choose $a_{1} = 2$, $a_{2} = 1$, $a_{3} = 0.6$ and $a_{4} = 0.5$. Under this choice of parameters, equation \eqref{eq:onedimensional} satisfies Assumptions \ref{ass:globallinear}, \ref{ass:techniquecond}, \ref{As:fghgrowthcond}, and \ref{ass:fghtimeLipschitz}, as well as the required regularity condition. Therefore, Proposition \ref{prop:levyweakorder} and Theorem \ref{th:weakorder} are applicable. 
   
      \begin{figure}[!htbp]
      \begin{center}
            \subfigure[$D(t_{n})$]
	          {\includegraphics[width=0.30\textwidth]
	          {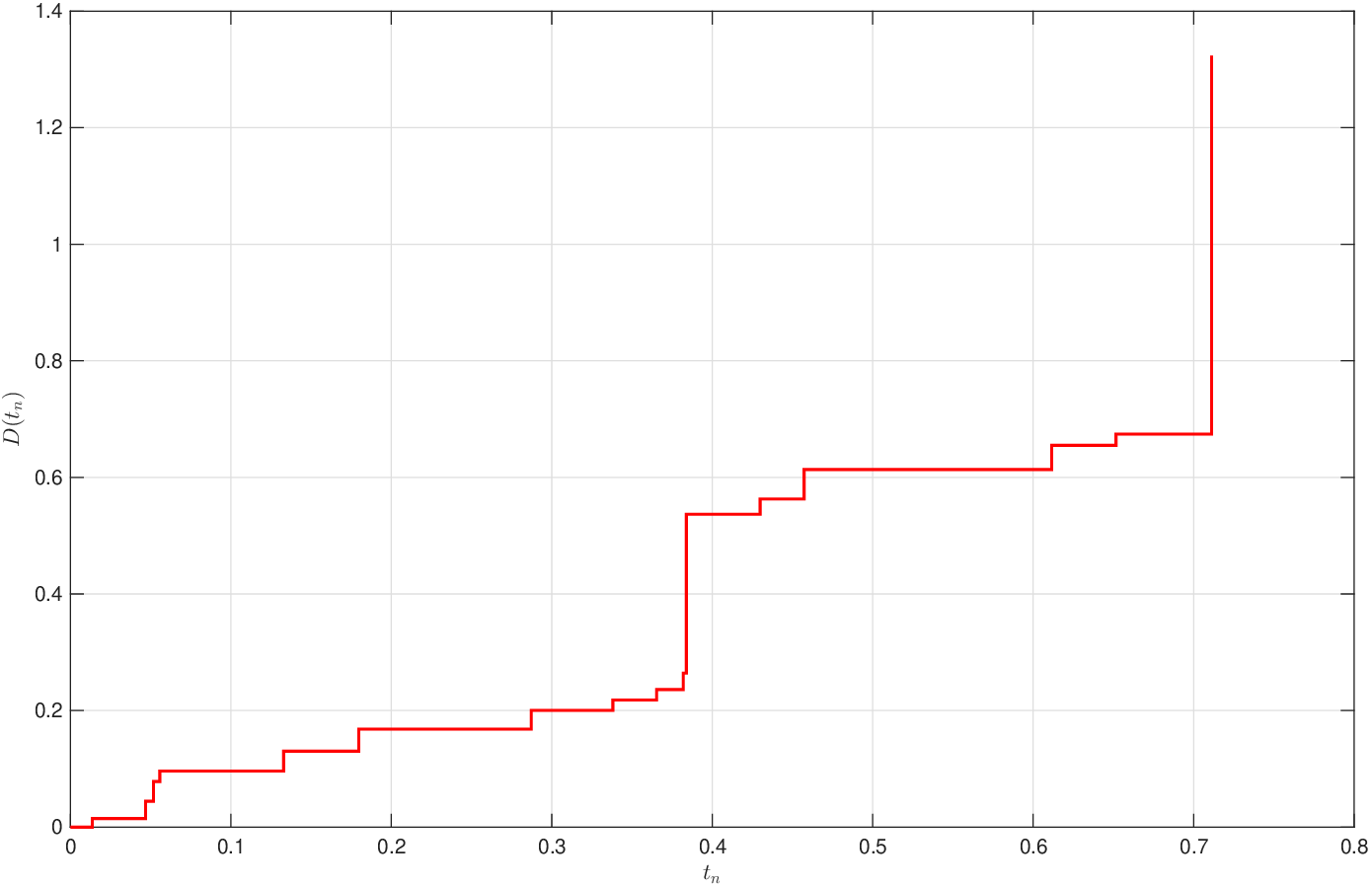}}
            \subfigure[$E_{\Delta}(t)$]
            {\includegraphics[width=0.30\textwidth]
            {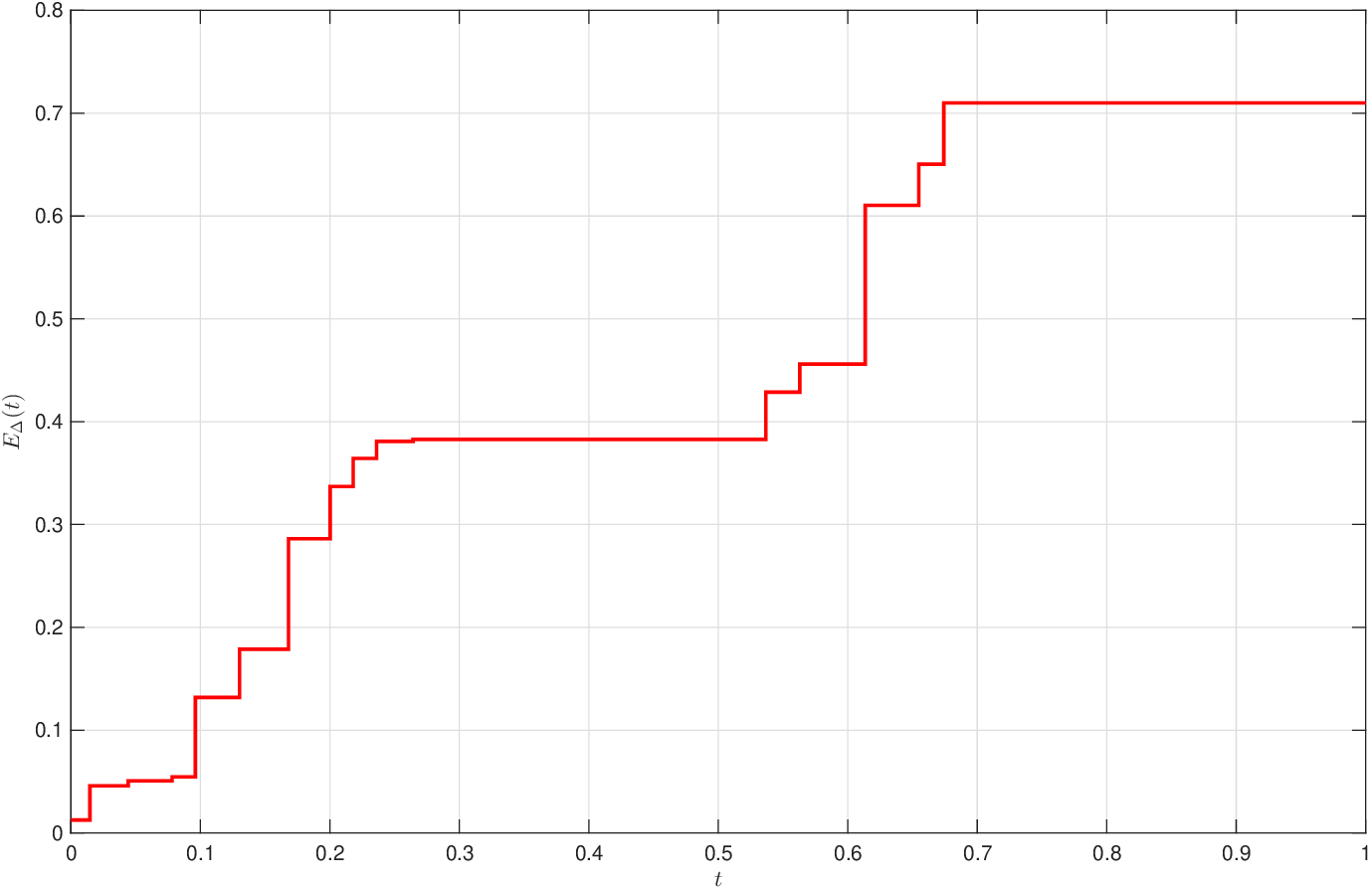}}
            \subfigure[$X_{\Delta}(t)$]
            {\includegraphics[width=0.30\textwidth]
            {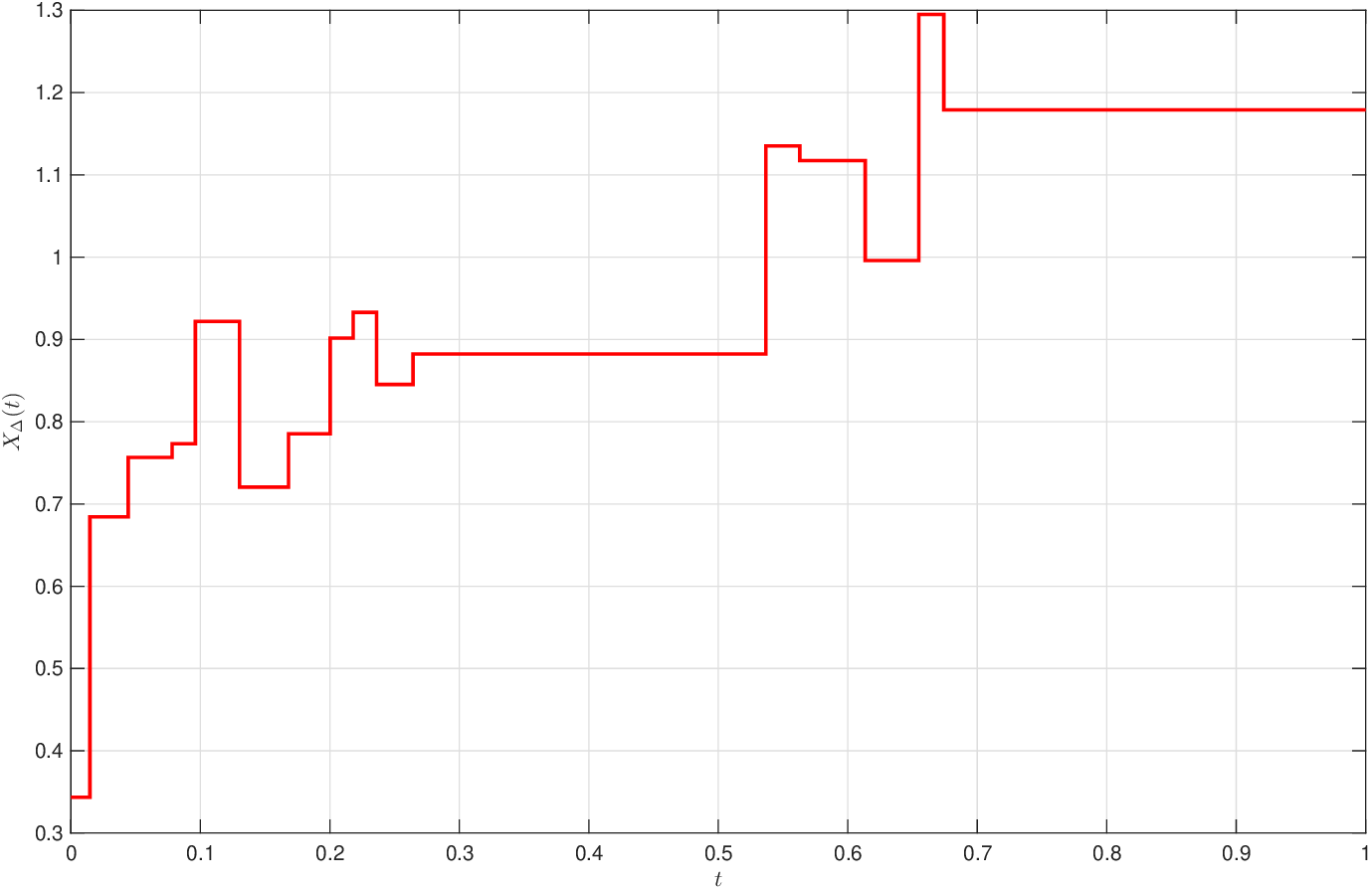}}
            \centering\caption {Sample paths of stochastic 
            processes with $\Delta = 2^{-10}$ and $\theta = 0.5$ 
            for \eqref{eq:onedimensional}}
            \label{fig:samplepaths}
      \end{center}
      \end{figure}

      For a given stepsize $\Delta \in (0,1)$, let $t_{n} := n\Delta$ for $n \in \N$. Following the LePage series representation, we generate the discrete values $\{D(t_{n})\}_{n \in \N}$ of the subordinator $\{D(t)\}_{t \geq 0}$; see \cite[Section 3.3]{janicki1994simulation} and \cite[Section 5]{chen2026strong} for details. Throughout the numerical experiments, the truncation level in the LePage series is fixed as $K = 1000$. A sample path of $D(t)$, generated with $\Delta = 2^{-10}$, is shown in Figure~\ref{fig:samplepaths}(a). Given $T = 1$, we follow the idea in \cite{jum2016strong} and use the discrete values $\{D(t_{n})\}_{n \in \N}$ to determine $N \in \N$ such that $T \in [D(t_{N}),D(t_{N+1})$. Owing to \eqref{eq:EDeltattn}, the inverse subordinator $E_{\Delta}(t)$ is a step function on $[0,T]$, and a sample path of $E_{\Delta}(t)$ is displayed in Figure~\ref{fig:samplepaths}(b). Let $\{Y_{n}\}_{n = 0}^{N}$ be generated by applying the stochastic theta method to the corresponding non-time-changed SDEs of \eqref{eq:onedimensional}. By \eqref{eq:XDeltaYDelta} and \eqref{eq:YDeltaYn}, we have $X_{\Delta}(D(t_{n})) = Y_{n}$ for $n = 0,1,\cdots, N$ and $X_{\Delta}(T) = Y_{N}$. A sample path of the numerical solution $X_{\Delta}(t)$ is plotted in Figure~\ref{fig:samplepaths}(c).
    
      \begin{figure}[!htbp]
      \begin{center}
            \subfigure[$\theta = 0$]
            {\includegraphics[width=0.30\textwidth]
            {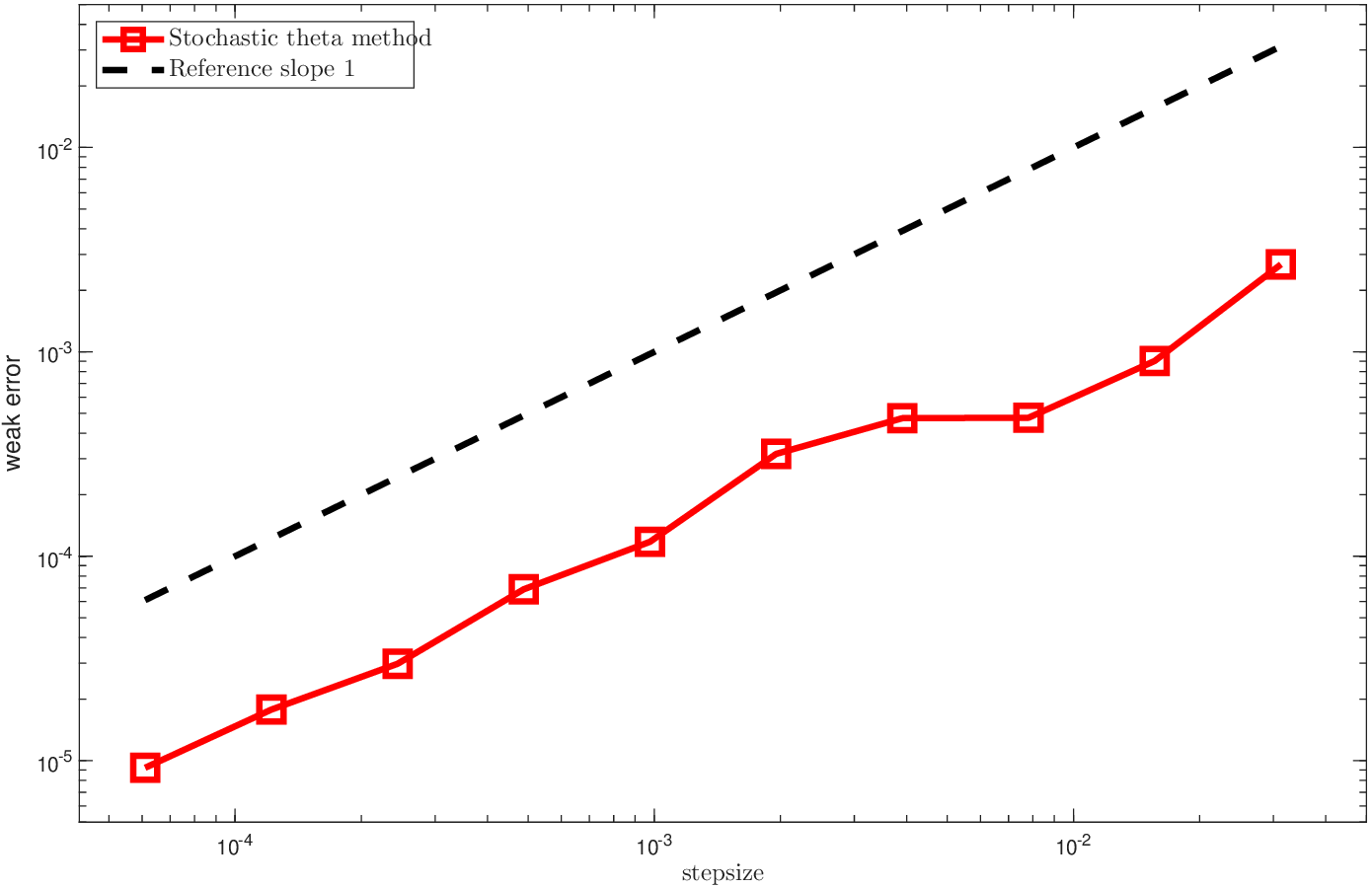}}
            \subfigure[$\theta = 0.5$]
            {\includegraphics[width=0.30\textwidth]
            {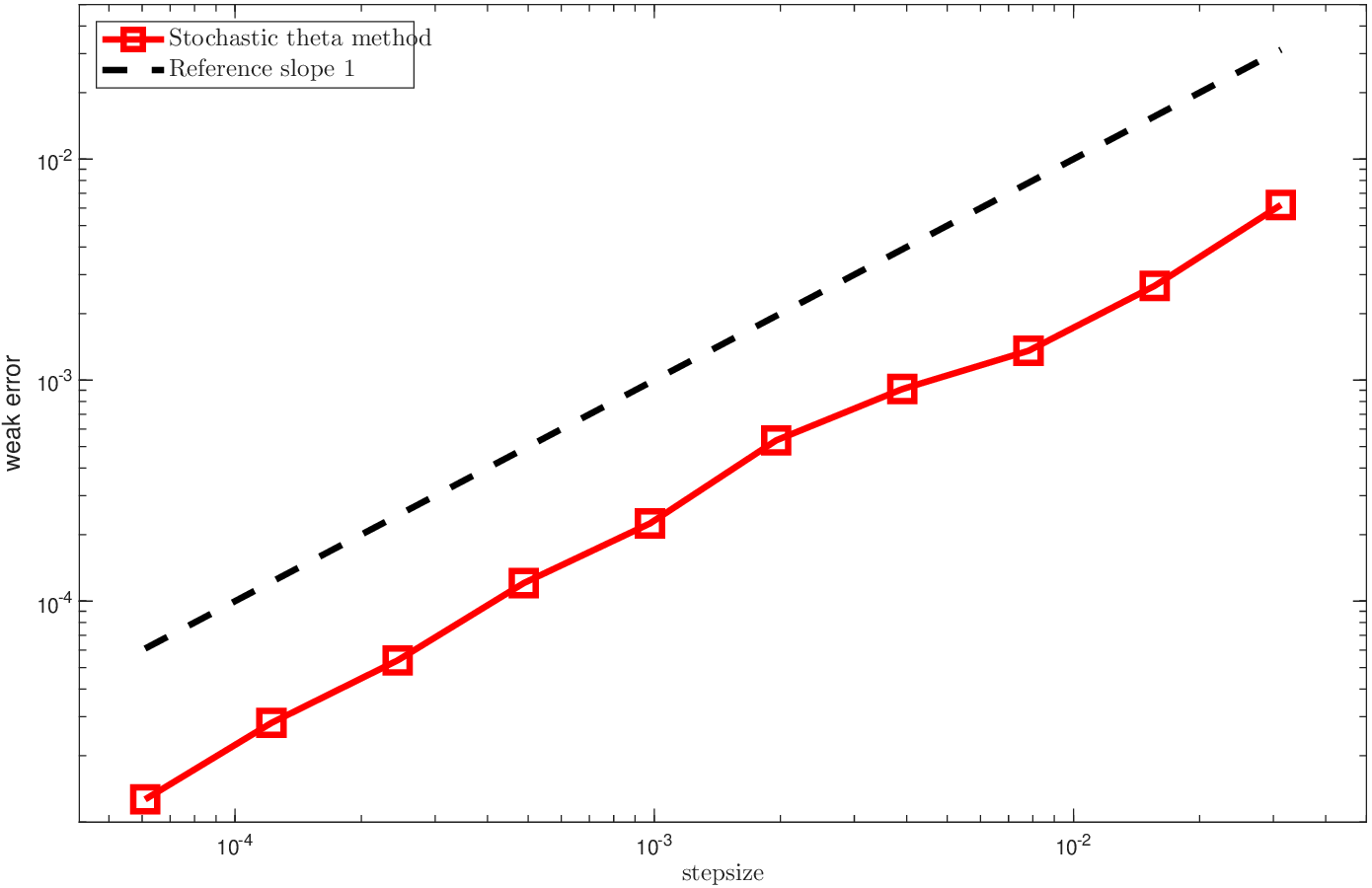}}
            \subfigure[$\theta = 1$]
            {\includegraphics[width=0.30\textwidth]
            {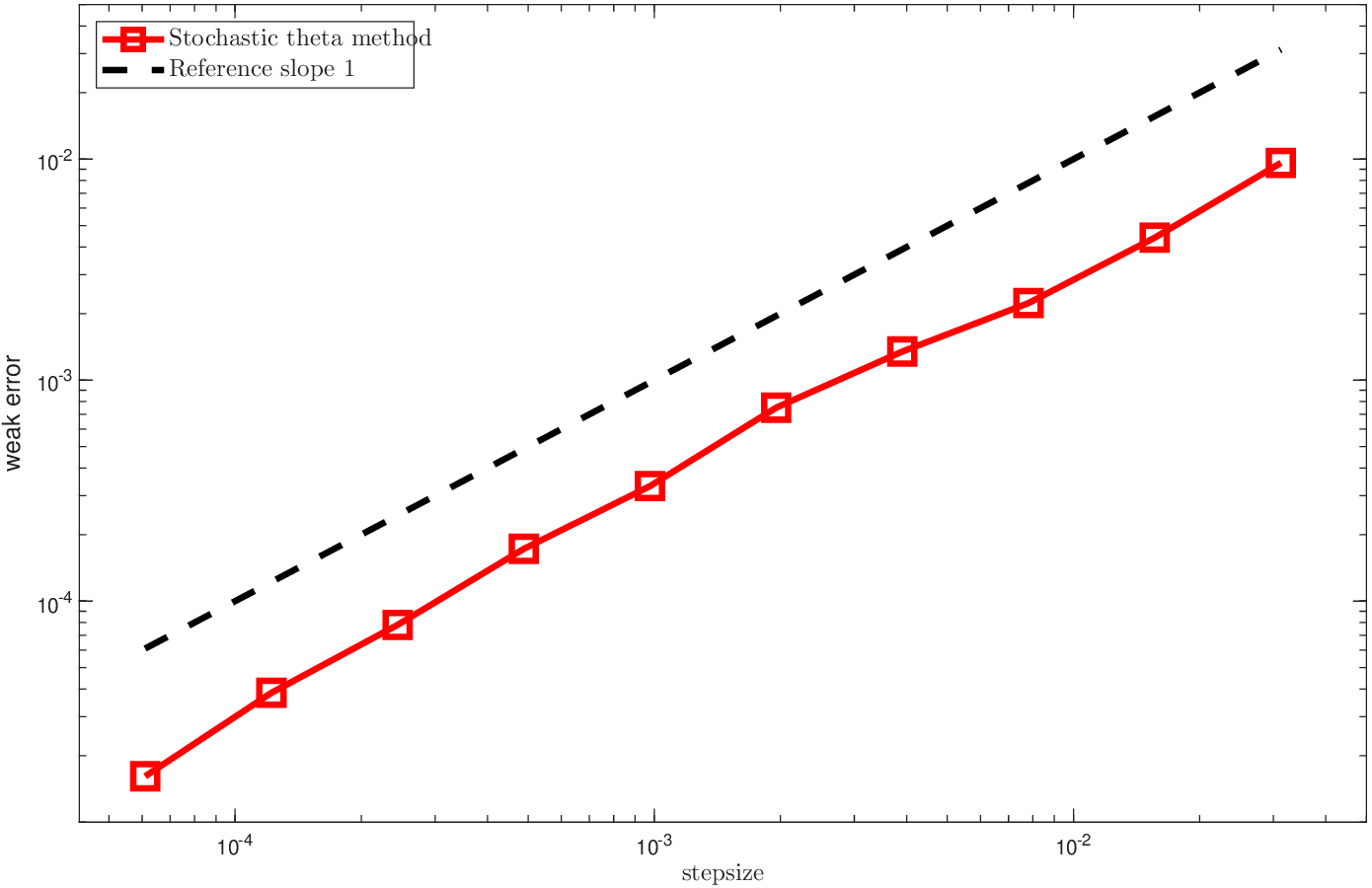}}
            \caption{Weak convergence 
            order with test function $\Phi(x) = e^{-x^{2}}$ 
            for \eqref{eq:onedimensional}}
            \label{fig:weakorderone}
      \end{center}
      \end{figure}

      To numerically verify the weak convergence order for \eqref{eq:onedimensional}, we identify the unavailable exact solution with a fine-step numerical solution with  $\Delta = 2^{-16}$. The numerical approximations are then computed by ten stepsizes $\Delta = 2^{-15}, 2^{-14}, \cdots, 2^{-6}$. Figure \ref{fig:weakorderone} shows that for different values of $\theta$, the weak error curves are parallel to the reference lines, confirming the weak order one for the stochatic theta method.
\end{example}

\begin{example}
\upshape 
      Consider the Kubo oscillator driven by time-changed L\'{e}vy noise
      \begin{align}\label{eq:twodimensional}
            \diff{\left[\begin{array}{c} \notag
                 X_{1}(t) \\ X_{2}(t) \\
            \end{array}\right]}
            =&~
            \left[\begin{array}{cc}
                0 & -a \\
                a & 0 \\
            \end{array}\right]
            \left[\begin{array}{c}
            X_{1}(t-) \\ X_{2}(t-) \\
            \end{array}\right]\diff{E(t)}
            +
            \sigma\left[\begin{array}{c}
                  X_{1}(t-) \\ X_{2}(t-) \\
            \end{array}\right]\diff{W(E(t))}
            \\&~+
            \int_{|z| < 1} \gamma 
            \left[\begin{array}{c}
            X_{2}(t-) \\ X_{1}(t-) \\
            \end{array}\right]
            z \,\widetilde{N}(\dif{z},\dif{E(t)}),
            \quad t \in (0,T]  
      \end{align}
      with $(X_{1}(0), X_{2}(0)) = (1,1)$ and $a = 2$, $\sigma = 0.5$, $\gamma = 0.5$. For this example, the Brownian motion \(W\), the Poisson random measure \(N\), and the \(\alpha\)-stable subordinator \(D(t)\), together with their numerical discretizations, are chosen exactly as in Example \ref{ex:onedimension}. Under this setting, equation \eqref{eq:twodimensional} satisfies Assumptions \ref{ass:globallinear}, \ref{ass:techniquecond}, \ref{As:fghgrowthcond}, and \ref{ass:fghtimeLipschitz}, as well as the required regularity condition, which means that Proposition \ref{prop:levyweakorder} and Theorem \ref{th:weakorder} hold. Following the same experimental framework as in Example \ref{ex:onedimension}, we use a fine-step reference solution with $\Delta = 2^{-16}$ and compute the weak errors for $\Delta = 2^{-15}, 2^{-14}, \cdots, 2^{-6}$. The corresponding weak error curves are shown in Figure \ref{fig:D2weakorderone}, from which one observes that, for different values of $\theta$, the stochastic theta method again exhibits weak order one.

      \begin{figure}[!htbp]
      \begin{center}
            \subfigure[$\theta = 0$]
            {\includegraphics[width=0.30\textwidth]
            {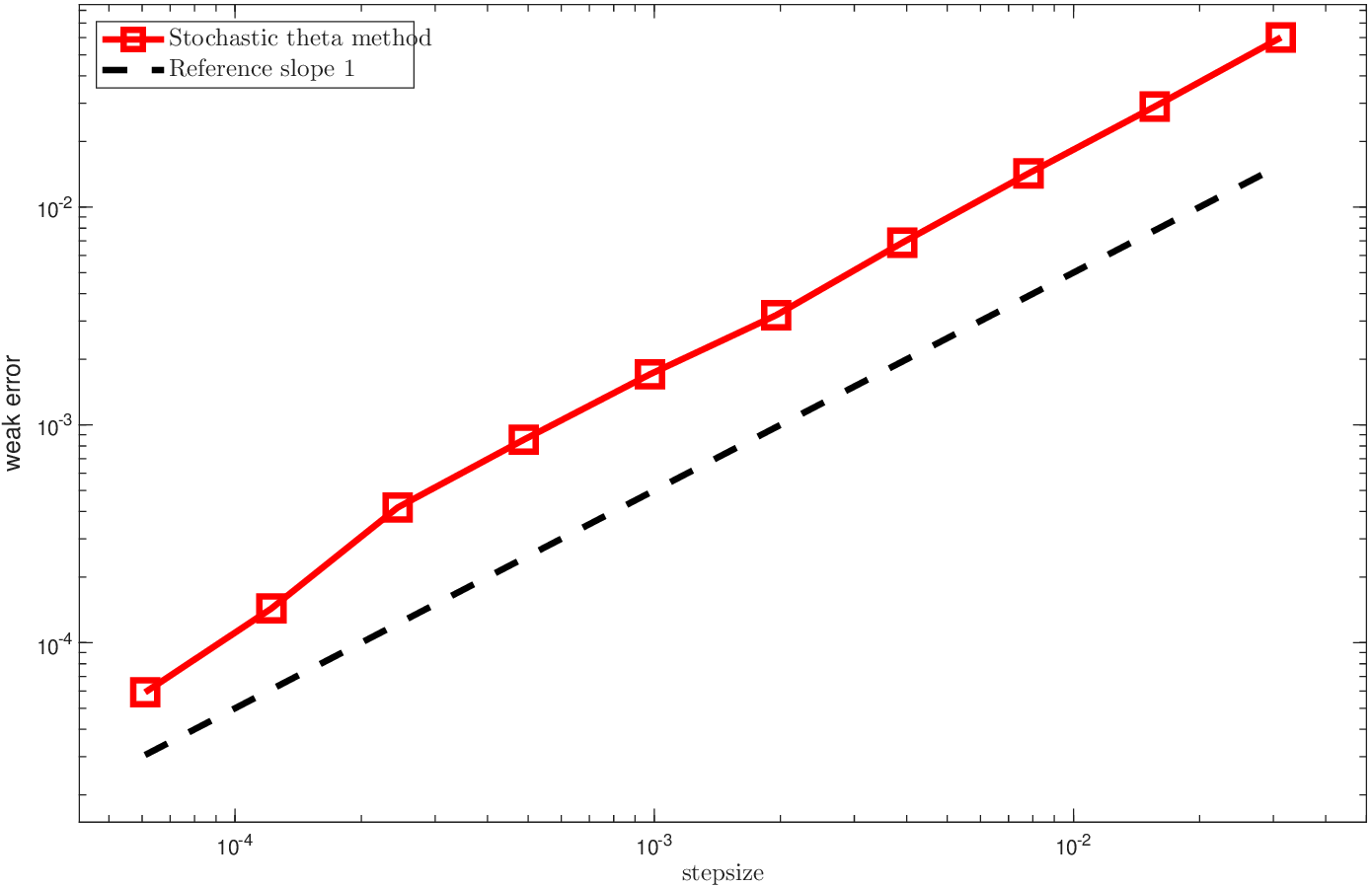}}
            \subfigure[$\theta = 0.5$]
            {\includegraphics[width=0.30\textwidth]
            {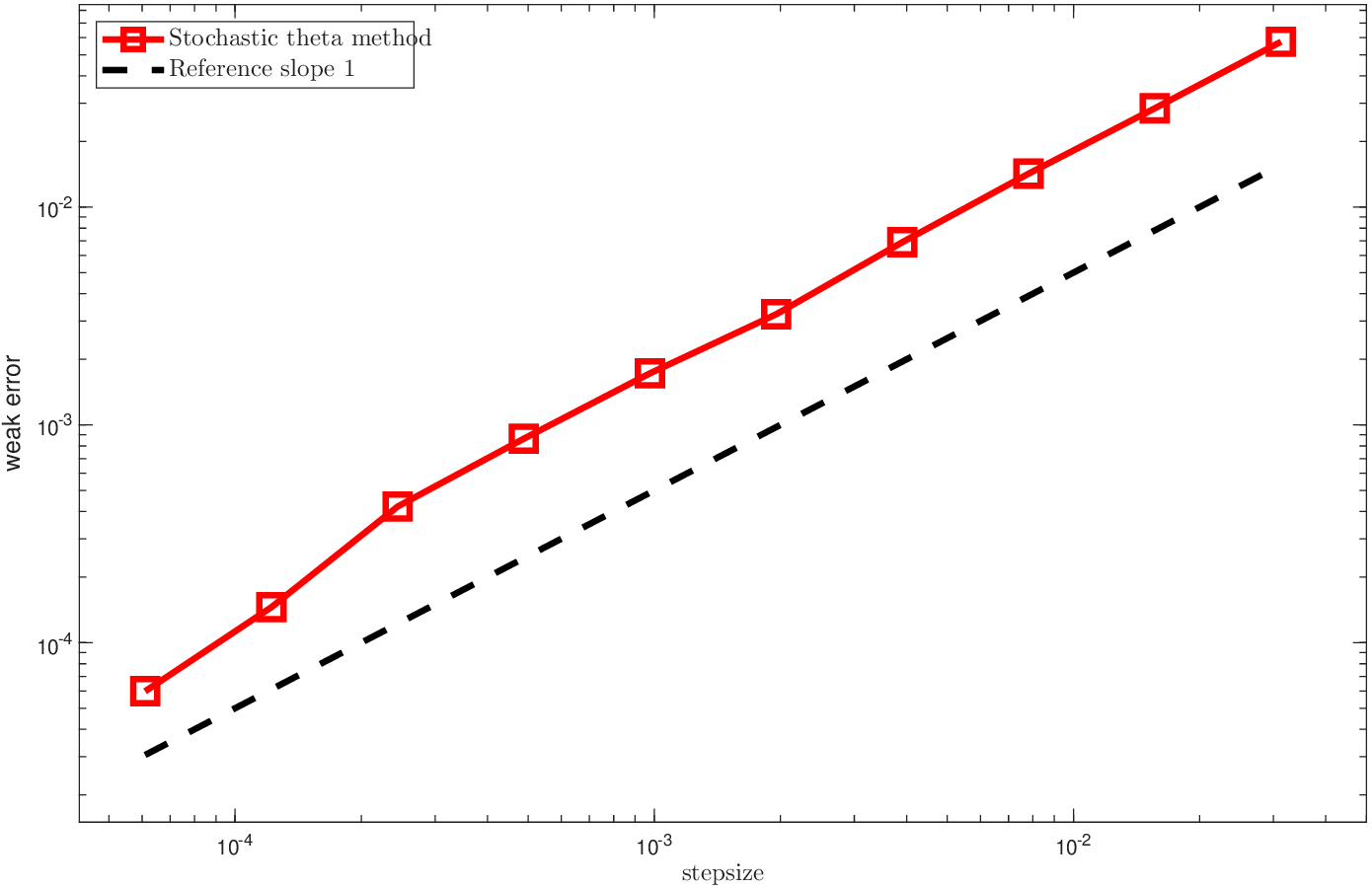}}
            \subfigure[$\theta = 1$]
            {\includegraphics[width=0.30\textwidth]
            {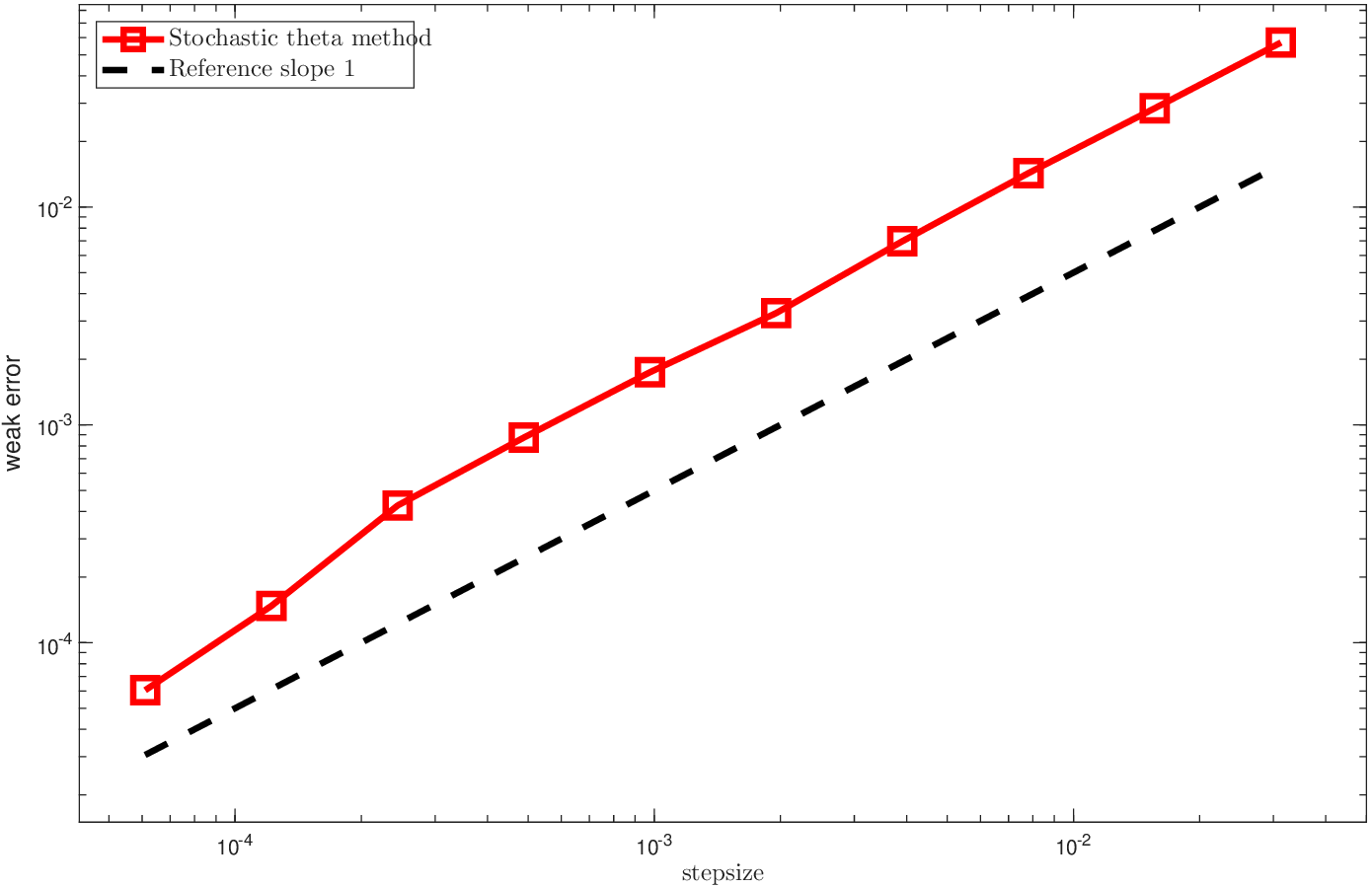}}
            \caption{Weak convergence 
            order with test function $\Phi(x_{1},x_{2}) = x_{1}x_{2}$ 
            for \eqref{eq:twodimensional}}
            \label{fig:D2weakorderone}
      \end{center}
      \end{figure}
\end{example}

\section{Conclusion}\label{sec:conclusion}		 
This work studies the weak convergence order of the stochastic theta method for SDEs driven by time-changed L\'{e}vy noise. Under global Lipschitz and linear growth conditions, we establish that the considered method attains weak order one for all $\theta \in [0,1]$. The numerical experiments further support the theoretical findings. These results extend the weak convergence analysis in the time-changed L\'{e}vy setting beyond the Euler--Maruyama method to the more general class of stochastic theta methods.
A natural direction for future research is to establish weak convergence orders of numerical methods under non-global Lipschitz conditions (see \cite{chen2019meansquare, chen2020convergence, platen2010numerical}). Other interesting topics include more general time-changed L\'{e}vy systems, such as models with infinite-activity jumps or state-dependent jump structures, as well as higher-order weak approximations in the time-changed L\'{e}vy setting.









\bibliographystyle{plain}	
\bibliography{SDETCLNBibtex}
\end{document}